\newtheorem*{rep@theorem}{\rep@title}
\newcommand{\newreptheorem}[2]{%
\newenvironment{rep#1}[1]{%
 \def\rep@title{#2 \ref{##1}}%
 \begin{rep@theorem}}%
 {\end{rep@theorem}}}
\title[$L^2$-Betti numbers and equivariant approximation]
 {Characters, $L^2$-Betti numbers and an equivariant approximation theorem}
\author[S. Kionke]{Steffen Kionke}
\address{Steffen Kionke\\
         Mathematisches Institut\\
         Heinrich-Heine-Universit\"at\\
         Universit\"ats\-str.~1\\
         40225 D\"usseldorf\\ Germany}
\email{steffen.kionke@uni-duesseldorf.de}
\subjclass[2010]{Primary 55N10; Secondary 20J06, 57S30, 11F75}
\keywords{$L^2$-invariants, approximation theorems}
\theoremstyle{plain}
\newtheorem{theorem}{Theorem}
\newtheorem{lemma}[theorem]{Lemma}
\newtheorem{corollary}[theorem]{Corollary}
\newtheorem{proposition}[theorem]{Proposition}
\theoremstyle{definition}
\newtheorem{definition}[theorem]{Definition}
\newtheorem{deflem}[theorem]{Definition \& Lemma}
\newtheorem{remark}[theorem]{Remark}
\newtheorem{example}[theorem]{Example}
\newtheorem{question}[theorem]{Question}
\numberwithin{equation}{section}
\numberwithin{theorem}{section}
\DeclareMathOperator{\id}{Id}
\DeclareMathOperator{\im}{im}
\DeclareMathOperator{\ind}{ind}
\DeclareMathOperator{\Ind}{Ind}
\DeclareMathOperator{\Tr}{Tr}
\DeclareMathOperator{\Hom}{Hom}
\DeclareMathOperator{\End}{End}
\DeclareMathOperator{\Aut}{Aut}
\DeclareMathOperator{\fix}{Fix}
\DeclareMathOperator{\res}{res}
\DeclareMathOperator{\Res}{Res}
\DeclareMathOperator{\Ch}{Ch}
\DeclareMathOperator{\rnk}{rk}
\DeclareMathOperator{\nulli}{null}
\DeclareMathOperator{\infl}{infl}
\DeclareMathOperator{\pos}{Pos}
\DeclareMathOperator{\Perm}{Per}
\DeclareMathOperator{\Irr}{Irr}
\DeclareMathOperator{\adeg}{a-deg}
\DeclareMathOperator{\ord}{ord}
\providecommand{\aCh}{\Ch_{f}}
\providecommand{\normal}{\trianglelefteq}
\providecommand{\calH}{\mathcal{H}}
\providecommand{\calN}{\mathcal{N}}
\providecommand{\calM}{\mathcal{M}}
\providecommand{\calB}{\mathcal{B}}
\providecommand{\calO}{\mathcal{O}}
\providecommand{\norm}[1]{\lVert #1 \rVert}
\providecommand{\triv}{1\!\!1}
\providecommand{\bbN}{\mathbb{N}}
\providecommand{\bbR}{\mathbb{R}}
\providecommand{\bbQ}{\mathbb{Q}}
\providecommand{\bbZ}{\mathbb{Z}}
\providecommand{\bbC}{\mathbb{C}}
\DeclareMathOperator{\SL}{SL}
\DeclareMathOperator{\GL}{GL}
\DeclareMathOperator{\Gal}{Gal}
\DeclareMathOperator{\U}{U}
\begin{document}

\begin{abstract}
Let $G$ be a group with a finite subgroup $H$. We define the $L^2$-multiplicity
of an irreducible representation of $H$ in the $L^2$-homology of a proper $G$-CW-complex.
These invariants generalize the $L^2$-Betti numbers.
Our main results are approximation theorems for $L^2$-multiplicities which extend the approximation theorems for $L^2$-Betti numbers of
L\"uck, Farber and Elek-Szab\'o respectively.
The main ingredient is the theory of characters of infinite groups and a method to induce characters from finite subgroups.
We discuss applications to the cohomology of (arithmetic) groups.
\end{abstract}

\maketitle

\section{Introduction}

The philosophy of $L^2$-invariants is to take a classical homological invariant of finite simplicial complexes and to mimic its construction
to define an $L^2$-analog of this invariant for simplicial complexes with a free cocompact action of a group~$G$.
From the viewpoint of homological algebra this amounts to replacing chain complexes of $\bbZ$-modules by chain complexes of modules over the tremendously complicated
group ring $\bbZ[G]$.

The most famous $L^2$-invariants are the $L^2$-Betti numbers. To define them one embeds the group ring $\bbZ[G]$ into the group von Neumann algebra $\calN(G)$ and 
uses the von Neumann dimension to define the dimension of kernel and image of the boundary operators.
In particular, given a free cocompact $G$-CW-complex $X$, one defines the $L^2$-Betti numbers $b_p^{(2)}(X;G)$ 
by applying this method to the cellular chain complex of $X$. 
Without doubt, it takes some effort to work out the details and we refer the reader to \cite{LuckBook} for a detailed account. 

\medskip

In this article we define and study $L^2$-invariants which refine the $L^2$-Betti numbers.
We first describe the finite dimensional analog.
Consider a finite simplicial complex $X$ with an action of a finite group $H$. The action is not assumed to be free.
The group $H$ acts on the $p$-th homology $H_p(X,\bbC)$ of $X$. This yields a finite dimensional complex representation 
of $H$ and every such representation is a direct sum of irreducible ones.
In particular, we may consider the \emph{multiplicity} $m(\chi, H_p(X,\bbC))$ of an irreducible representation $\chi \in \Irr(H)$ in $H_p(X,\bbC)$.

A major part of this article is devoted to defining corresponding $L^2$-multiplicities. 
Given a group $G$ with a finite subgroup $H$ and a proper cocompact $G$-CW-complex $X$, we obtain the $p$-th
\emph{$L^2$-multiplicity} $m^{(2)}_p(\chi, X; G)$ for every irreducible representation $\chi \in \Irr(H)$.
Suffice it to say that the $L^2$-multiplicities are non-negative real numbers defined as suitably normalized von Neumann dimensions with respect to 
certain finite von Neumann algebras.
We postpone the concise definition to Section \ref{sec:mainResults} as it requires some more terminology.

\medskip

The strength of $L^2$-invariants stems on the fact that they can often be approximated by their finite dimensional analogues.
The prototype of this phenomenon is L\"uck's approximation theorem.
\begin{theorem}[L\"uck \cite{Luck1994}]
  Let $X$ be a free cocompact $G$-CW-complex. Let $(\Gamma_n)_{n\in\bbN}$
  be a decreasing sequence of finite index normal subgroups in $G$ with $\bigcap_{n\in\bbN} \Gamma_n = \{1\}$.
  Then
  \begin{equation*}
     \lim_{n\to\infty} \frac{b_p(X/\Gamma_n)}{[G:\Gamma_n]} = b_p^{(2)}(X;G).
  \end{equation*}
\end{theorem}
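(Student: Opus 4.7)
The plan is to recast both sides of the desired equality in terms of spectral density functions of a single combinatorial Laplacian with integer group-ring coefficients, and to reduce the theorem to a weak-convergence statement together with uniform control of the mass near zero.

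First, I would pass from boundary operators to Laplacians. Writing $C_p(X) \cong \bbZ[G]^{k_p}$ as a free $\bbZ[G]$-module, the cellular boundary $c_p$ and its adjoint combine into the self-adjoint positive matrix $\Delta_p = c_{p+1}c_{p+1}^* + c_p^*c_p$ over $\bbZ[G]$. Extending to $\ell^2(G)^{k_p}$ produces a bounded self-adjoint operator $\Delta_p^{(2)}$ affiliated with $\calN(G)$ with $b_p^{(2)}(X;G) = \dim_{\calN(G)} \ker \Delta_p^{(2)}$. Since $\Gamma_n$ acts freely on $X$, one has $C_*(X/\Gamma_n) \cong C_*(X) \otimes_{\bbZ[\Gamma_n]} \bbZ$, so reducing $\Delta_p$ modulo $\Gamma_n$ yields a positive self-adjoint integer matrix $\Delta_p^{(n)}$ on $\bbC[G/\Gamma_n]^{k_p}$ whose complex nullity is exactly $b_p(X/\Gamma_n)$. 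Introducing the normalized spectral density functions $F_n(\lambda) = \tfrac{1}{[G:\Gamma_n]} \dim_\bbC \chi_{[0,\lambda]}(\Delta_p^{(n)})$ and $F(\lambda) = \dim_{\calN(G)} \chi_{[0,\lambda]}(\Delta_p^{(2)})$, the theorem becomes the assertion $\lim_{n\to\infty} F_n(0) = F(0)$.

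Second, I would prove convergence of moments. For every $m\geq 0$, the element $\Delta_p^m$ is supported on a fixed finite subset $S_m \subseteq G$. Because $\bigcap_n \Gamma_n = \{1\}$, for $n$ large enough $S_m \cap \Gamma_n = \{1\}$, so the normalized trace on $\bbC[G/\Gamma_n]$ of $(\Delta_p^{(n)})^m$ equals the $\calN(G)$-trace of $(\Delta_p^{(2)})^m$. Since all operators involved are uniformly bounded in norm by some constant $K$ depending only on $\Delta_p$, this yields weak convergence of the spectral measures on $[0,K]$, and in particular $\lim_n F_n(\lambda) = F(\lambda)$ at every continuity point $\lambda > 0$ of $F$ and $\limsup_n F_n(0) \leq F(0)$ by upper semicontinuity.

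The main obstacle, and the heart of the argument, is the reverse inequality $\liminf_n F_n(0) \geq F(0)$: weak convergence of measures is insensitive to mass accumulating at $0$, so one needs an estimate on $F_n(\lambda) - F_n(0)$ uniform in $n$. This is where the $\bbZ$-structure enters. The nonzero eigenvalues of the integer matrix $\Delta_p^{(n)}$ are roots of an integer polynomial, so their product (the first nonvanishing coefficient of $\det(tI-\Delta_p^{(n)})$ up to sign) is a nonzero integer of absolute value at least one. Combined with the uniform bound $\norm{\Delta_p^{(n)}} \leq K$, this gives the key estimate
\begin{equation*}
  F_n(\lambda) - F_n(0) \leq \frac{[G:\Gamma_n]\log K - \log 1}{\log K - \log \lambda} \cdot \frac{1}{[G:\Gamma_n]} = \frac{\log K}{\log K - \log \lambda}
\end{equation*}
for $0 < \lambda < K$, valid uniformly in $n$. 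Letting $\lambda \to 0^+$ through continuity points of $F$ and using the weak-convergence statement gives $\liminf_n F_n(0) \geq F(\lambda) - \tfrac{\log K}{\log K - \log \lambda}$, and finally $\lambda \to 0$ produces $\liminf_n F_n(0) \geq F(0)$, completing the proof. The delicate point to watch in carrying this out is the Fuglede--Kadison style determinant bound: one must make sure that the integrality argument is applied to the actual characteristic polynomial of $\Delta_p^{(n)}$ and that the constants really do not depend on $n$.
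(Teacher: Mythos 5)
Correct, and this is essentially the approach encoded in the paper's general framework (the paper cites L\"uck's theorem and recovers it as the special case $H=\{1\}$ of its approximation theorems): your moment-computation is the spectral measure map of Theorem \ref{thm:spectralMeasureMap} specialized to the regular character and a normal tower, your determinant bound is exactly the ``L\"uck Lemma'' of Lemma \ref{lem:lucklemma}, and the pass from upper semi-continuity to the two-sided limit is Lemma \ref{lem:boundedDeterminant}. The only cosmetic slip is a missing factor $k_p$ in your spectral-density estimate (there are up to $k_p[G:\Gamma_n]$ eigenvalues, not $[G:\Gamma_n]$), but since $k_p$ is fixed this changes nothing.
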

For several other instances of approximation we refer the reader to the survey~\cite{LuckSurvey2016}.
Our main results concern the approximation of $L^2$-multiplicities by finite dimensional multiplicities.
The simplest version can be stated as follows.
\begin{theorem}\label{thm:approximation1}
 Let $G$ be a group and let $H \leq G$ be a finite subgroup.
 Let $X$ be a proper cocompact $G$-CW-complex and let $(\Gamma_n)_{n\in\bbN}$
  be a decreasing sequence of finite index normal subgroups in $G$ with $\bigcap_{n\in\bbN} \Gamma_n = \{1\}$.
  Then
  \begin{equation*}
     \lim_{n\to\infty} \frac{m(\chi, H_p(X/\Gamma_n,\bbC))}{[G:\Gamma_n]} = m_p^{(2)}(\chi, X;G)
  \end{equation*}
  for every irreducible representation $\chi \in \Irr(H)$.
\end{theorem}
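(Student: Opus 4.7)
The plan is to reduce the multiplicity statement to an $h$-by-$h$ trace approximation and then invoke an $H$-equivariant version of L\"uck's theorem. Since $H$ is finite and $\bigcap_n \Gamma_n = \{1\}$, for every $n$ sufficiently large $H \cap \Gamma_n = \{1\}$, so $H$ embeds into $G/\Gamma_n$ and acts on the finite complex $X/\Gamma_n$. Character orthogonality then gives
\begin{equation*}
  m\bigl(\chi, H_p(X/\Gamma_n,\bbC)\bigr) = \frac{1}{|H|}\sum_{h \in H} \overline{\chi(h)}\, \Tr\bigl(h \mid H_p(X/\Gamma_n,\bbC)\bigr),
\end{equation*}
and the induced-character machinery developed in the earlier sections expresses $m_p^{(2)}(\chi, X;G)$ by a parallel formula $\frac{1}{|H|}\sum_{h}\overline{\chi(h)}\,\tau_h(H_p^{(2)}(X;G))$, where $\tau_h$ is the $h$-twisted trace used in the definition of the $L^2$-multiplicity. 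Since the sum over $H$ is finite and uniform in $n$, it suffices to prove, for each $h \in H$ separately,
\begin{equation*}
  \frac{\Tr\bigl(h \mid H_p(X/\Gamma_n,\bbC)\bigr)}{[G:\Gamma_n]} \longrightarrow \tau_h\bigl(H_p^{(2)}(X;G)\bigr).
\end{equation*}

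Both sides then sit in a common spectral framework via Hodge theory. Let $\Delta_p$ be the combinatorial Laplacian on $C_p^{(2)}(X;G) \cong \bigoplus_i \ell^2(G/K_i)$ for the finite cell stabilizers $K_i$, and let $\Delta_p^{(n)}$ be its quotient on $C_p(X/\Gamma_n,\bbC)$; both are bounded, self-adjoint and $H$-equivariant, with matrix entries in $\bbZ[G]$. Writing $P$ and $P^{(n)}$ for the orthogonal projections onto the kernels of $\Delta_p$ and $\Delta_p^{(n)}$ respectively, Hodge theory identifies $\Tr(h \mid H_p(X/\Gamma_n,\bbC)) = \Tr(h P^{(n)})$ and $\tau_h(H_p^{(2)}(X;G)) = \tau_h(P)$, so the theorem reduces to the twisted spectral convergence $\frac{1}{[G:\Gamma_n]} \Tr(h P^{(n)}) \to \tau_h(P)$.

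This spectral statement is then established in two stages. For a polynomial $f$, expanding $\frac{1}{[G:\Gamma_n]}\Tr(h f(\Delta_p^{(n)}))$ as a sum of matrix coefficients of $h f(\Delta_p)$ indexed by $\Gamma_n$-cosets, the hypothesis $\bigcap_n \Gamma_n = \{1\}$ ensures that only the identity-coset contribution survives in the limit, yielding $\tau_h(f(\Delta_p))$ directly from the definition of the induced character. Passing from polynomial $f$ to $f = \chi_{\{0\}}$ is an $H$-equivariant version of L\"uck's sandwich argument: the standard Fuglede--Kadison/logarithmic-determinant bound controls the untwisted spectral density functions of $\Delta_p^{(n)}$ uniformly near zero, and the inequality $|\Tr(h T)| \leq \Tr(T)$ for unitary $h$ and positive $T$ transfers this uniform control to the twisted densities, after which polynomial approximations of $\chi_{\{0\}}$ conclude.

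The main obstacle is that $T \mapsto \Tr(h T)$ is not positive, so L\"uck's monotone-convergence step does not apply verbatim. The cleanest resolution, exploiting the paper's framework, is to decompose $C_p^{(2)}(X;G)$ into $H$-isotypic components, which commute with $\Delta_p$ by its $H$-equivariance; on each isotype the twisted trace equals a positive multiple of an ordinary finite von Neumann trace of the restriction of $P$, and this restriction is itself the kernel projection of a self-adjoint operator in a reduced finite von Neumann algebra. L\"uck's original approximation theorem then applies verbatim to each component, and summing with the character coefficients yields the theorem.
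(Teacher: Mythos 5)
Your plan is a genuine alternative to the paper's route, reducing the statement to an $h$-by-$h$ twisted (Lefschetz-type) trace convergence. The paper instead works entirely inside the von Neumann algebra $\calN_\phi$ of the \emph{induced} character $\phi=\ind_H^G(\widetilde{\chi})$: it shows that $\phi_n=\infl^G_{G/\Gamma_n}\ind_H^{G/\Gamma_n}(\widetilde{\chi})\to\phi$ pointwise (Lemma~\ref{lem:convergenceInducedCharacters}), that the $\phi_n$ are arithmetically hyperlinear of uniformly bounded degree, and then invokes Theorem~\ref{thm:approxArith}, so only the ordinary positive trace $\tau_\phi$ ever appears. Your approach trades this for a positivity problem, which you will have to resolve anyway. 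Where your approach is correct, it is correct for the reason you already give in the third paragraph: for a spectral projection $Q=\chi_{(0,\varepsilon)}(\Delta)$ one has $\lvert\Tr(hQ)\rvert\leq\Tr(Q)$, and combined with the uniform Fuglede--Kadison bound this gives the sandwich. That argument works and does not need monotone convergence; your stated ``main obstacle'' is a red herring.

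What does \emph{not} work is the ``cleanest resolution'' in your final paragraph. There is no $H$-isotypic decomposition of $C_p^{(2)}(X;G)$ compatible with the von Neumann module structure: $H\leq G$ acts on the chain complex as part of the right $G$-action, i.e.\ as part of the $\calN(G)$-module structure itself, so it does not define a commuting $H$-action to decompose against. The left regular $H$-action (the commutant side) does commute with the right $G$-action, but it does \emph{not} commute with the Laplacian $\Delta_p$, which is left multiplication by a matrix over $\bbC[G]$; on the summands $\ell^2(S_i\backslash G)$ it is not even well-defined unless $H$ normalizes every cell stabilizer. So the isotypic components you invoke do not exist, and the claimed reduction of $\Tr(hP)$ to positive multiples of ordinary traces of restrictions of $P$ fails. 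Since the direct positivity-transfer argument already closes the gap, you should delete this paragraph.

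Two further points you should make precise. First, the identity $m_p^{(2)}(\chi,X;G)=\frac{1}{|H|}\sum_{h}\overline{\chi(h)}\,\tau_h(\cdot)$ is not a consequence of ``the induced-character machinery'' without work: unwinding $\tau_\phi$ against the explicit formula $\phi(g)=\sum_{h}i_G(g,h)\widetilde{\chi}(h)$ shows that $\tau_h$ must be the $[G:C_G(h)]^{-1}$-normalized sum of the coefficients of the operator over the entire conjugacy class of $h$, not the naive single Fourier coefficient at $h$. Second, the convergence of the polynomial traces $\frac{1}{[G:\Gamma_n]}\Tr(h\,f(\Delta_p^{(n)}))\to\tau_h(f(\Delta_p))$ requires exactly the conjugacy-class-counting statements that the paper proves in Lemma~\ref{lem:convergenceInducedCharacters}: $[G/\Gamma_n:C_{G/\Gamma_n}(h)]\to[G:C_G(h)]$ and eventual separation of the finitely many $G$-conjugacy classes meeting the support of $f(\Delta_p)$. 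These are true under your hypotheses, but you cannot simply say ``only the identity-coset contribution survives'': that is the $h=1$ case, and for $h\neq 1$ a nontrivial renormalization by centralizer indices enters.
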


L\"uck's approximation theorem has been generalized in several ways and we will obtain more general versions of Theorem \ref{thm:approximation1} as well.
Firstly, Farber \cite{Farber1998} extended L\"uck's theorem by replacing the assumption
that the subgroups $\Gamma_n$ be normal by the weaker \emph{Farber condition}.
We will introduce an equivariant Farber condition and prove a generalization for $L^2$-multiplicities in Theorem \ref{thm:eqFarber}.
Secondly, Elek-Szab\'o \cite{ElekSzabo2005} obtained an approximation theorem for $L^2$-Betti numbers by replacing 
the assumption that the groups $\Gamma_n$ be of finite index by the condition that every quotient group $G/\Gamma_n$ is sofic.
A similar result for $L^2$-multiplicities is Theorem \ref{thm:eqSofic} below. It is based on the new notion of relative soficity.
We will not discuss approximation for sequences of lattices in locally compact groups as in \cite{ABBGNRS, PST2016}.

\medskip

In some cases the $L^2$-multiplicities can be computed from the $L^2$-Betti numbers.
\begin{repproposition}{prop:centralizer}
  If every element $h \in H\setminus\{1\}$ satisfies $[G:C_G(h)] = \infty$, then
  $m_p^{(2)}(\chi, X;G) = \frac{\dim(\chi)}{|H|} \: b_p^{(2)}(X; G)$ for every $\chi \in \Irr(H)$.
\end{repproposition}
At first glance this seems disappointing, as it means that the $L^2$-multiplicities (in this case) are not really new invariants.
However, even in this case Theorem \ref{thm:approximation1} is stronger than L\"uck's approximation theorem. 
In addition, the proposition makes it possible to compute the $L^2$-multiplicities in many cases.
This leads to new applications of $L^2$-Betti numbers. For instance, Theorem \ref{thm:approximation1} can be used to show that every irreducible
representation occurs with arbitrarily large multiplicity in the homology of a large finite covering space.
Here is a version of this result for the cohomology of groups.

\begin{reptheorem}{thm:groupCohomology}
   Fix $p\in \bbN$.
   Let $\Gamma$ be a group of type $F_{p+1}$ and let $H$ be a finite group which acts on $\Gamma$ by automorphisms.
   Assume that the fixed point group $\Gamma^h$ has infinite index in $\Gamma$ for all $h\in H\setminus\{1\}$.
   
   Let $\Gamma \supset \Gamma_1 \supset \Gamma_2 \supset \dots$ be a decreasing sequence of finite index normal $H$-stable subgroups with $\bigcap_{n}\Gamma_n = \{1\}$.
    Then
   \begin{equation*}
       \lim_{n\to \infty} \frac{m(\chi, H^p(\Gamma_n,\bbC))}{[\Gamma:\Gamma_n]} = \frac{\dim(\chi)}{|H|} \: b^{(2)}_p(\Gamma) 
   \end{equation*}
   for every irreducible representation $\chi \in \Irr(H)$.
\end{reptheorem}

\begin{corollary}
  Let $\Gamma$ be a residually finite group of type $F_{p+1}$ with $b^{(2)}_p(\Gamma) \neq 0$.
  Suppose that $H$ is a finite group which acts on $\Gamma$ by automorphisms such that
  $[\Gamma:\Gamma^h] = \infty$ for all $h\in H\setminus\{1\}$.
  For every $\chi \in \Irr(H)$ and every $B\in \bbN$, there is a finite index $H$-stable subgroup $\Gamma_0 \leq \Gamma$
  such that $m(\chi, H^p(\Gamma_0,\bbC)) \geq B$.
\end{corollary}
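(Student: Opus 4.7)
The plan is to deduce the corollary directly from Theorem \ref{thm:groupCohomology}, so the only real work is to produce a sequence of finite index normal $H$-stable subgroups with trivial intersection.

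First I would observe that $\Gamma$ must be infinite: if $\Gamma$ were finite, then $b_p^{(2)}(\Gamma)$ vanishes for $p \geq 1$, and in the case $p=0$ every subgroup of the finite group $\Gamma$ has finite index, contradicting $[\Gamma:\Gamma^h]=\infty$ (unless $H=\{1\}$, a case which can be handled trivially).

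Next I would construct the sequence. Since $\Gamma$ is residually finite, choose a descending sequence $N_1 \supset N_2 \supset \cdots$ of finite index normal subgroups with $\bigcap_n N_n = \{1\}$. The group $H$ acts on $\Gamma$ by automorphisms, so for each $n$ the subgroup
\begin{equation*}
\widetilde{N}_n := \bigcap_{h \in H} h(N_n)
\end{equation*}
is again a finite index normal subgroup of $\Gamma$ (a finite intersection of normal subgroups of finite index), and by construction it is $H$-stable. Setting $\Gamma_n := \bigcap_{k \leq n} \widetilde{N}_k$ yields a decreasing sequence of finite index normal $H$-stable subgroups with $\bigcap_n \Gamma_n \subseteq \bigcap_n N_n = \{1\}$.

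Now I would apply Theorem \ref{thm:groupCohomology} to this sequence: for every $\chi \in \Irr(H)$,
\begin{equation*}
\lim_{n\to\infty} \frac{m(\chi, H^p(\Gamma_n,\bbC))}{[\Gamma:\Gamma_n]} \;=\; \frac{\dim(\chi)}{|H|}\, b^{(2)}_p(\Gamma) \;>\; 0,
\end{equation*}
where positivity uses the hypothesis $b^{(2)}_p(\Gamma) \neq 0$. Since $\Gamma$ is infinite and $\bigcap_n \Gamma_n = \{1\}$, the indices $[\Gamma:\Gamma_n]$ tend to infinity, so the numerators $m(\chi, H^p(\Gamma_n,\bbC))$ also tend to infinity. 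In particular, for any $B \in \bbN$ we can pick $n$ large enough that $m(\chi, H^p(\Gamma_n,\bbC)) \geq B$, and $\Gamma_0 := \Gamma_n$ does the job. There is no serious obstacle here; the only small subtlety is the averaging trick to upgrade arbitrary finite index normal subgroups to $H$-stable ones, which is standard.
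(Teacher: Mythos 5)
Your proof is correct and is essentially the straightforward deduction the paper intends (the paper states the corollary without proof, immediately after Theorem \ref{thm:groupCohomology}). The averaging step --- passing from a residual chain $(N_n)$ to the $H$-stable chain $\widetilde N_n = \bigcap_{h\in H} h(N_n)$ --- is exactly the standard construction needed, and the observation that a nested sequence of finite index subgroups with trivial intersection in an infinite group must have unbounded index is valid.

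One small quibble: your parenthetical claim that the case $\Gamma$ finite, $p=0$, $H=\{1\}$ "can be handled trivially" is not actually right. In that configuration $b_0^{(2)}(\Gamma)=1/|\Gamma|\neq 0$ and all the hypotheses are satisfied, yet $\dim_\bbC H^0(\Gamma_0,\bbC)=1$ for every finite index subgroup $\Gamma_0$, so the conclusion fails for $B\geq 2$. This is a defect of the corollary's statement rather than of your argument (and it disappears if one reads $p\in\bbN$ as $p\geq 1$, or if the hypotheses are meant to implicitly force $\Gamma$ infinite), but it is not handled trivially so much as it must be excluded.
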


The reader is encouraged to experiment with this theorem to see that it yields non-obvious results. 
In particular, we may apply this to arithmetic groups. The following is a special case of Corollary \ref{cor:GaloisAction}.
\begin{corollary}\label{cor:introSL2}
 Let $F/\bbQ$ be a totally real Galois extension of degree $d = [F:\bbQ]$.
 Let $\Gamma \subseteq \SL_2(F)$ be a $\Gal(F/\bbQ)$-stable arithmetic subgroup.
 Every irreducible representation of the Galois group $\Gal(F/\bbQ)$ occurs with arbitrarily large 
 multiplicity in the cohomology $H^d(\Gamma_0,\bbC)$ 
 of a sufficiently small subgroup $\Gamma_0 \leq_{f.i.} \Gamma$.
\end{corollary}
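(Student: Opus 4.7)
The plan is to deduce the statement as a direct application of the preceding corollary, taking $p = d$ and letting $H = \Gal(F/\bbQ)$ act on $\Gamma \subseteq \SL_2(F)$ through its natural action on matrix entries. Four hypotheses must be verified: (i) $\Gamma$ is residually finite; (ii) $\Gamma$ is of type $F_{d+1}$; (iii) $b^{(2)}_d(\Gamma) \neq 0$; and (iv) $[\Gamma:\Gamma^\sigma] = \infty$ for every non-trivial $\sigma \in \Gal(F/\bbQ)$. Properties (i) and (ii) are classical for arithmetic groups (e.g.\ via the Borel--Serre compactification, which in fact furnishes type $F_\infty$), and I would invoke them without further argument.

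For (iii), I would first pass to a torsion-free finite-index subgroup of $\Gamma$, which only rescales the $L^2$-Betti numbers by the index and hence preserves (non-)vanishing. The $d$ real places of $F$ embed this subgroup as a lattice in $\SL_2(\bbR)^d$ acting on the product symmetric space $(\mathbb{H}^2)^d$ of real dimension $2d$. Combining Dodziuk's computation $b^{(2)}_1(\mathbb{H}^2) > 0$ with the product formula for $L^2$-Betti numbers of locally symmetric spaces then yields $b^{(2)}_n(\Gamma) = 0$ for $n \neq d$ and $b^{(2)}_d(\Gamma) > 0$.

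For (iv), fix $\sigma \in \Gal(F/\bbQ) \setminus \{1\}$ with fixed field $F^\sigma \subsetneq F$. Any element of $\Gamma$ fixed by $\sigma$ must have all entries in $F^\sigma$, so $\Gamma^\sigma \subseteq \SL_2(F^\sigma)$. Under the embedding determined by the real places, $\SL_2(F^\sigma)$ lies in a proper real algebraic subgroup of $\SL_2(\bbR)^d$ of strictly smaller real dimension, and hence in a subset of Haar measure zero; this forces $\Gamma^\sigma$ to have infinite index in the lattice $\Gamma$. The only substantive step is (iii); the remaining pieces are essentially formal. With all four hypotheses in place, the preceding corollary produces, for each $\chi \in \Irr(\Gal(F/\bbQ))$ and each bound $B$, a finite-index $\Gal(F/\bbQ)$-stable subgroup $\Gamma_0 \leq \Gamma$ satisfying $m(\chi, H^d(\Gamma_0, \bbC)) \geq B$, as desired.
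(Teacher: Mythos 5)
Your high-level strategy is correct and matches the paper's: the paper deduces the corollary from Corollary~\ref{cor:GaloisAction} (which in turn specializes Theorem~\ref{thm:groupCohomology} to arithmetic lattices in semisimple groups of fundamental rank zero), and you are specializing the unnumbered corollary following Theorem~\ref{thm:groupCohomology} directly to $\SL_2$ — the same argument unfolded one stage earlier. Hypotheses (i) and (ii) are fine. For (iii), what you call ``the product formula for $L^2$-Betti numbers of locally symmetric spaces'' conceals a real step: $\Gamma$ is a \emph{non-uniform} lattice in $\SL_2(\bbR)^d$ (when $d>1$, since $\SL_2$ is $F$-isotropic), so Borel's computation for cocompact lattices does not apply verbatim. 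The paper bridges this using Gaboriau's proportionality principle together with Borel's result for uniform lattices. Your conclusion that $b^{(2)}_n(\Gamma)$ is concentrated in degree $d$ and nonzero there is correct, but you should make the proportionality argument explicit rather than appealing to a K\"unneth-type formula, which $\Gamma$ itself (not being a product group) does not directly satisfy.

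The genuine gap is in (iv). You write that $\SL_2(F^\sigma)$ lies in a proper real algebraic subgroup $L$ of $\SL_2(\bbR)^d$ of strictly smaller dimension, ``hence in a subset of Haar measure zero; this forces $\Gamma^\sigma$ to have infinite index in the lattice $\Gamma$.'' This inference is not valid as stated: \emph{every} discrete subgroup of $\SL_2(\bbR)^d$ — in particular every lattice — already has Haar measure zero, so containment in a measure-zero set is no obstruction at all to $\Gamma^\sigma$ being a lattice. What is actually needed is a statement of the form: a discrete subgroup contained in a proper closed subgroup of a non-compact unimodular group cannot have finite covolume. This is exactly Lemma~\ref{lem:nolattice} in the paper, whose proof pushes forward the invariant measure on $G_\infty^n/\Gamma^\sigma$ to a nontrivial invariant measure on a non-compact homogeneous space. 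Alternatively one can invoke Borel density: a lattice in a semisimple real group with no compact factors is Zariski-dense, whereas $L$ is a proper Zariski-closed subgroup. Either route closes the gap, but the measure-zero observation alone does not.
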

It is worth noting, that this corollary can also be obtained without $L^2$-methods using Lefschetz numbers instead.
Actually, the observation of this result lead us to discover Theorem \ref{thm:approximation1}.

\subsection*{Methods}
The approach in this article is based on the space of characters $\Ch(G)$ of the group $G$.
Every such character can be used to define a suitable notion of Betti numbers. This is probably known to the experts. Nevertheless,
we decided to include this material and we tried to present it in a way which
resembles closely the definition $L^2$-Betti numbers (circumventing some more advanced notions from the theory of von Neumann algebras). 
Moreover, we made an effort to keep the presentation to a certain degree self-contained.

The main ingredient in order to define the $L^2$-multiplicities is the construction of
\emph{characters induced from finite subgroups}. The $L^2$-multiplicity of a character $\chi$ is defined to be the Betti number of the
character induced from $\chi$. The idea behind this is the well-known Frobenius reciprocity in the representation theory of finite groups.

Finally, the approximation theorem still relies
on the same ideas as the theorems of L\"uck and Farber: 
weak convergence of spectral measures and a ``L\"uck Lemma'' to bound the Fuglede-Kadison determinant.
We discuss approximation in some generality using convergent sequences of characters in $\Ch(G)$.

\subsection*{Organization of the article} The article consists of four sections.

\noindent\ref{sec:CharactersAndBettiNumbers}. \emph{Characters and Betti numbers.}
We introduce Hilbert $G$-bimodules and explain how they can be used to define dimensions and Betti numbers.
We observe that Hilbert $G$-bimodules correspond to characters of the group and we study the space of characters.
The main result is the continuity of the spectral measure map.

\noindent\ref{sec:soficCharacters}. \emph{Sofic and arithmetically hyperlinear characters.} 
We define \emph{sofic} and \emph{arithmetically hyperlinear} characters.
These are characters which can be approximated (in a suitable sense) by finite permutation representations and representations with finite image respectively.
We show that convergent sequences of these characters yield convergent sequences of Betti numbers.

\noindent\ref{sec:Induction}. \emph{Induction of characters from finite groups}.
We define induction of characters via characters. A special case is the induction of characters from finite subgroups.
We show that this construction preserves \emph{arithmetic hyperlinearity} of characters under some conditions. This leads us
to the definition of groups which are sofic relative to a finite subgroup.

\noindent\ref{sec:mainResults}. \emph{Main results and applications.}
We prove and state the main results of this article: the ``sofic'' and the ``Farber'' approximation theorem for $L^2$-multiplicities.
The results stated in the introduction are deduced.
We discuss applications to the cohomology of groups and (as an example) consider the Galois action in the cohomology of arithmetic groups.

\subsection*{Acknowledgements}
The research described in this article was conducted at the Hausdorff Research Institute for Mathematics in Bonn during the trimester program ``Topology'' in 2016. I would like to thank the institute 
for the hospitality and support. Moreover, I would like to thank the members of the research group \emph{$L^2$-Invariants}, namely Gerrit Hermann, Holger Kammeyer, Aditi Kar and Jean Raimbault
for the stimulating discussions during the program.

\section{Characters and Betti numbers}\label{sec:CharactersAndBettiNumbers}

Throughout $G$ denotes a discrete group. 

\subsection{Tracial Hilbert $G$-bimodules}
\begin{definition}
A \emph{tracial Hilbert $G$-bimodule} is a quadruple $(\calH, \ell, r, e)$ consisting of 
a Hilbert space $\calH$, a homomorphism $\ell\colon G \to \U(\calH)$, an antihomomorphism $r\colon G \to \U(\calH)$,
and a vector $e\in\calH$, such that
\begin{enumerate}
   \item\label{it:bimodule1} the actions commute, i.e.\ $\ell(g)\circ r(h) = r(h) \circ \ell(g)$ for all $g,h\in G$,
   \item\label{it:bimodule2} $\ell(g)e = r(g)e$ for all $g \in G$, and
   \item\label{it:bimodule3} $\norm{e}=1$ and the translates $(\ell(g)e)_{g\in G}$ span a dense subspace of $\calH$.
\end{enumerate}
We write $\ell(g)x = g \cdot x$ and $r(g)x = x\cdot g$ for all $x\in \calH$. For brevity, we often denote a tracial Hilbert $G$-bimodule only by ($\calH$,$e$) or $\calH$.
\end{definition}

A tracial Hilbert $G$-bimodule $(\calH, \ell, r, e)$ always has a unique anti-linear involution $J\in U(\calH)$ such that 
$J\circ \ell(g) = r(g^{-1}) \circ J$ and $J(e) = e$. Indeed, one can just define $J(g\cdot e) = e \cdot g^{-1}$ and extend it anti-linearly and continuously to $\calH$. Moreover, we can associate to $\calH$ a pair of von Neumann algebras defined as the commutants of the left and right action. We will consider $\ell(G)'$ as acting on $\calH$ from the right and $r(G)'$ as acting from the left.
One can check that $\ell(G)' = r(G)''$ and $r(G)' = \ell(G)''$ and moreover, $J\ell(G)'J = r(G)'$.
In particular, we denote the commutant algebra of the right action by  $\calN_\calH = \ell(G)'' =  r(G)'$.

The vector $e$ is a trace-element for $\calN_\calH$ (in the sense of \cite[I.6.\ Def.~3]{DixmiervNA}). Indeed,
for all $g,h \in G$ we have
\begin{equation*}
 \langle gh\cdot e, e \rangle = \langle g\cdot e \cdot h, e \rangle = \langle g\cdot e , e \cdot h^{-1}\rangle
 = \langle g\cdot e , h^{-1} \cdot e \rangle
 = \langle hg\cdot e, e \rangle.
\end{equation*}
The operators spanned by the left multiplication operators are dense in $\calN_\calH$ with respect to the weak topology,
and thus $\langle ab\cdot e, e\rangle = \langle ba \cdot e, e \rangle$ for all $a,b \in \calN_\calH$.
This means, the von Neumann algebra $\calN_\calH$ has a finite faithful normal trace $\tau$ defined as
\begin{equation*}
   \tau(a) = \langle a\cdot e, e \rangle.
\end{equation*}
The trace defines an inner product on $\calN_\calH$ by the formula $\langle a,b\rangle=\tau(b^*a)$.
It is easy to see that $\calH$ is isometrically isomorphic to the completion $L^2(\calN_\calH,\tau)$ of the von Neumann algebra with respect to this inner product.

\begin{example}
 Let $L^2(G)$ denote the square summable functions on $G$.
 Clearly, it is a Hilbert space with a unitary left and right action of $G$. Let $e$ denote the characteristic function of the identity element $1_G$. Then $(L^2(G),e)$ is a Hilbert $G$-bimodule and the associated von Neumann algebra is the group von Neumann algebra $\calN(G)$ with the usual von Neumann trace.
\end{example}

As for the group von Neumann algebra, it is possible to define Hilbert $\calN_\calH$-modules which have a suitable dimension theory.
\begin{definition}
   Let $(\calH,e)$ be tracial Hilbert $G$-bimodule. A \emph{finitely generated Hilbert $\calN_\calH$-module} is 
   a Hilbert space $V$ with a unitary right $G$-action which admits a right $G$-equivariant and isometric embedding into $\calH^n$ for some $n \in \bbN$.
   A \emph{homomorphism} of Hilbert $\calN_\calH$-modules is a bounded $G$-equivariant linear operator.
\end{definition}

Given a f.g.\ Hilbert $\calN_\calH$-module $V$ and an endomorphism $f\colon V\to V$, then we define 
the von Neumann trace of $f$ by the following usual steps.
Choose an embedding of $\iota\colon V\to \calH^n$ and
let $P$ denote the orthogonal projection onto $\iota(V)$.
The operator $F = \iota \circ f \circ P$ is $r(G)$-equivariant, thus
is given by the multiplication with a matrix $(a_{i,j})_{i,j=1}^n \in M_n(\calN_\calH)$.
The trace of $f$ on $V$ is defined to be
\begin{equation*}
   \Tr_{\calN_\calH}(f) := \sum_{i=1}^n \tau(a_{i,i}) = \sum_{i=1}^n \langle a_{i,i} \cdot e, e \rangle.
\end{equation*}
 As for the group von Neumann algebra one can check that this definition is independent of the chosen embedding.
 The \emph{von Neumann dimension} of a f.g.\ Hilbert $\calN_\calH$-module is the trace is the identity map on $V$, i.e.\
 \begin{equation*}
      \dim_{\calN_\calH}(V) = \Tr_{\calN_\calH}(\id_V).
 \end{equation*}
 
 For later use we remind the reader of the following lemma which is a consequence of the polar decomposition of bounded operators on Hilbert spaces.
 \begin{lemma}\label{lem:homomorphiesatz}
 Let $T\colon V \to W$ be a homomorphism of f.g.\ Hilbert 
 $\calN_\calH$-modules.
 Then $V \cong \ker(T) \oplus \overline{\im(T)}$  as Hilbert $\calN_\calH$-modules.
 \end{lemma}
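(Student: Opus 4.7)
The plan is to reduce the statement to the polar decomposition, exactly as in the classical Hilbert-module case (e.g.\ for $\calN(G)$-modules in L\"uck's book), the only novelty being that all operators involved must be verified to be right $G$-equivariant.

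First I would decompose $V$ orthogonally as $V = \ker(T) \oplus \ker(T)^\perp$. Since $T$ is a homomorphism of Hilbert $\calN_\calH$-modules, it is in particular right $G$-equivariant, so $\ker(T)$ is a closed $G$-invariant subspace of $V$, and hence $\ker(T)^\perp$ is also a closed $G$-invariant subspace. Both summands inherit the structure of finitely generated Hilbert $\calN_\calH$-modules (a closed $G$-invariant subspace of the f.g.\ Hilbert $\calN_\calH$-module $V$ is itself f.g., since it embeds into any ambient $\calH^n$ via the given embedding of $V$). Thus this orthogonal decomposition already lives in the category of Hilbert $\calN_\calH$-modules.

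Next I would apply the polar decomposition to $T$: write $T = U\lvert T \rvert$ with $\lvert T \rvert = (T^*T)^{1/2}$ and $U$ the unique partial isometry having initial space $\overline{\im(\lvert T\rvert)} = \ker(T)^\perp$ and final space $\overline{\im(T)}$. The adjoint $T^*$ is $G$-equivariant because $T$ is, so $T^*T$ and $\lvert T\rvert$ commute with the right $G$-action. By the uniqueness of the polar decomposition, the same holds for $U$. Therefore $U$ restricts to a right $G$-equivariant isometric isomorphism $\ker(T)^\perp \isomorph \overline{\im(T)}$, which is an isomorphism of Hilbert $\calN_\calH$-modules. Combining this with the previous orthogonal decomposition gives $V \cong \ker(T) \oplus \overline{\im(T)}$.

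The only potential obstacle is the $G$-equivariance of the partial isometry $U$, but this is immediate from the uniqueness part of the polar decomposition; everything else is formal manipulation with orthogonal complements in a Hilbert space.
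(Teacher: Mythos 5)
Your proposal is correct and follows exactly the approach the paper intends: the paper simply refers to Section~1.1.2 of L\"uck's book, where the same polar-decomposition argument is carried out for $\calN(G)$-modules, and notes that the proof carries over verbatim. Your spelled-out verification that $U$ inherits $G$-equivariance from the uniqueness of the polar decomposition is precisely the point that makes this transfer work.
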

 The proof is the same as the proof for group von Neumann algebras discussed in Section 1.1.2 in~\cite{LuckBook}.

 \subsection{Characters}
\begin{definition}\label{def:characters}
A function $\phi \in L^\infty(G)$ is of \emph{positive type} if
the matrix $(\phi(g^{-1}_i g_j))_{i,j}$ is positive semi-definite for every finite list of elements $g_1,g_2,\dots,g_n \in G$.
A function $\phi \in L^\infty(G)$ of positive type is called a \emph{character} if it is constant on conjugacy classes and $\phi(1_G)=1$.
\end{definition}

Given a tracial Hilbert $G$-bimodule $(\calH,e)$, then the trace $\tau$ restricted to $\ell(G)$ defines a character
\begin{equation*}
    \tau(g) = \langle g\cdot e, e\rangle.
\end{equation*}
An important observation is that every character arises in this way and the character determines the tracial Hilbert $G$-bimodule up to isomorphism.
This is a consequence of the GNS-construction and goes back Thoma.

\begin{proposition}[cf.\ Lemma 1 in \cite{Thoma1964}]\label{prop:Thoma}
The map which associates to a tracial Hilbert $G$-bimodule $(\calH,e)$ its character $\tau$ induces a bijective correspondence between the isomorphism classes of tracial Hilbert $G$-bimodules and the characters of $G$.
\end{proposition}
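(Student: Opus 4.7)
The plan is to run the GNS-type construction in both directions: first check that $(\calH,e)\mapsto \tau$ genuinely lands in the space of characters, then recover $(\calH,e)$ from $\tau$ up to canonical isomorphism, and finally construct a bimodule from any given character.

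\emph{Well-definedness.} Set $\tau(g) = \langle g\cdot e, e\rangle$. The value at $1_G$ equals $\norm{e}^2 = 1$, and positivity of $\tau$ is immediate from
\begin{equation*}
\sum_{i,j} c_i\overline{c_j}\,\tau(g_j^{-1}g_i) = \Bigl\lVert \sum_i c_i\,\ell(g_i)e\Bigr\rVert^2 \geq 0.
\end{equation*}
The conjugacy-invariance $\tau(hgh^{-1})=\tau(g)$ is the trace identity $\langle ab\cdot e,e\rangle=\langle ba\cdot e,e\rangle$ already derived for $\calN_\calH$ in the preceding paragraphs, applied to $a=\ell(h)$ and $b=\ell(gh^{-1})$.

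\emph{Injectivity.} Given $(\calH,e)$ with character $\tau$, I would observe that the inner product among translates is rigidly prescribed by $\tau$: by unitarity of $\ell$,
\begin{equation*}
\langle g\cdot e, h\cdot e\rangle = \langle \ell(h^{-1}g)e,e\rangle = \tau(h^{-1}g).
\end{equation*}
Moreover the commuting conditions (\ref{it:bimodule1})--(\ref{it:bimodule2}) force the right action on the dense subspace $\mathrm{span}\{g\cdot e\}$ to be $r(k)(h\cdot e) = \ell(h)r(k)e = \ell(h)\ell(k)e = hk\cdot e$, and the left action is $\ell(k)(h\cdot e)=kh\cdot e$. Hence both actions and the inner product on the dense subspace depend only on $\tau$, so any two tracial Hilbert $G$-bimodules with the same character are isomorphic via the map sending $g\cdot e$ to $g\cdot e'$.

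\emph{Surjectivity.} Given a character $\tau$, I would perform the GNS construction on the group algebra $V=\bbC[G]$ using the sesquilinear form $\langle g,h\rangle_\tau := \tau(h^{-1}g)$, extended sesquilinearly. Positivity of $\tau$ makes this form positive semi-definite; quotienting by its null space and completing yields a Hilbert space $\calH$, with $e$ the image of $1_G$. Define $\ell(k)g=kg$ and $r(k)g=gk$ on $V$ and check they preserve $\langle\cdot,\cdot\rangle_\tau$: left-invariance is trivial, while right-invariance uses precisely the conjugacy-invariance of $\tau$, via $\tau(k^{-1}h^{-1}gk)=\tau(h^{-1}g)$. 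They descend to commuting unitary (anti)actions, conditions (\ref{it:bimodule1})--(\ref{it:bimodule3}) are immediate, and $\langle g\cdot e,e\rangle = \tau(g)$ by construction.

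The main technical point---and the only place where any care is needed---is the passage from the algebraic bimodule structure on $\bbC[G]$ to a genuine tracial Hilbert $G$-bimodule on the completion, i.e.\ verifying that $\ell(k)$ and $r(k)$ pass to the quotient by the null space and extend by continuity. Both facts reduce to showing that left and right multiplication by a fixed group element are bounded in the seminorm induced by $\tau$, which follows because they are isometries on the pre-Hilbert space before quotienting. Everything else is a direct unpacking of definitions.
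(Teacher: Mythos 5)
Your proposal is correct and is precisely the GNS-type argument that the cited Lemma 1 of Thoma uses; the paper itself gives no independent proof, just that reference. All three steps check out: positivity of $\tau$ unpacks to the squared norm of $\sum_i c_i\,\ell(g_i)e$, conjugacy invariance is the trace identity specialized to $\ell(G)$, and both the reconstruction of $(\calH,e)$ from $\tau$ (via $\langle g\cdot e, h\cdot e\rangle=\tau(h^{-1}g)$ determining the inner product and right action on the dense span) and the GNS construction from a given character (where right-invariance of the form $\langle g,h\rangle_\tau=\tau(h^{-1}g)$ uses exactly the class-function property) are the standard moves; the closing remark that left/right multiplication are isometries on the pre-Hilbert space, hence preserve the null space and extend continuously, disposes of the only analytic subtlety.
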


Given a character $\phi$ of $G$ we will denote the associated tracial Hilbert $G$-bimodule by $\calH_\phi$ and the corresponding finite von Neumann algebra
by $\calN_\phi = \calN_{\calH_\phi}$.

\begin{example}
There are various ways to construct characters of a group $G$.
\begin{itemize}
 \item The \emph{regular} character $\delta_G^{(2)} = \delta^{(2)}$ of $G$ is the character of $L^2(G)$, this is,
 $\delta^{(2)}(g)=0$ for all $g\neq 1_G$. 
 \item The \emph{trivial} character $\triv$ of $G$ is defined as
 $\triv(g)=1$ for all $g \in G$. It corresponds to the one-dimensional bimodule with trivial action.
 
\item Let $\rho\colon G \to \U(n)$ be a finite dimensional unitary representation of $G$, then the normalized trace
    $\phi(g) = \frac{1}{n}\Tr(\rho(g))$ is a character of $G$. This special case will be discussed in more generality in Section \ref{sec:finitedimensional} below.
\item Let $(X,\mu)$ be a standard Borel probability space with a measure preserving action of $G$. Measuring the fixed point sets of the elements of $G$ defines a character,
i.e.\ $\phi(g) = \mu(\fix_X(g) )$.
\end{itemize}
\end{example}
\begin{remark}
 There are infinite groups which have only the trivial and the regular character. For instance, the commutator subgroups of Higman-Thompson groups have this property (see \cite{DudkoMedynets}).
\end{remark}

\subsection{Betti numbers of chain complexes of finitely generated projective $\bbC[G]$-modules}\label{sec:BettiNumbers}
 We will use tracial Hilbert $G$-bimodules (or characters) mainly as a way to associate a rank to a homomorphism between 
 finitely generated projective $\bbC[G]$-modules and to define Betti numbers. We briefly describe how this works. 
 Here modules are always considered to be right modules.
 
 In the following we will consider finitely generated projective modules over $\bbQ[G]$ and $\bbC[G]$.
 The main examples are described in the following lemma.
 \begin{lemma}\label{lem:finitestabilizersProjective}
   Let $F$ be a field of characteristic $0$.
   If $S \leq G$ is a finite subgroup,
   then the permutation module $F[S\backslash G]$ is a finitely generated projective right $F[G]$-module.
 \end{lemma}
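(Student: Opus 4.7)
The plan is to realize $F[S\backslash G]$ as a cyclic direct summand of the free right module $F[G]$, using the standard averaging idempotent over $S$ (which requires characteristic zero, or more generally that $|S|$ be invertible in $F$).

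First, I would introduce the averaging idempotent
\begin{equation*}
   e_S \;=\; \frac{1}{|S|} \sum_{s\in S} s \;\in\; F[S] \;\subseteq\; F[G].
\end{equation*}
A direct computation gives $e_S^2 = e_S$ and $s\, e_S = e_S\, s = e_S$ for every $s\in S$. Since $e_S$ is an idempotent in $F[G]$, there is a decomposition of right $F[G]$-modules
\begin{equation*}
   F[G] \;=\; e_S\, F[G] \;\oplus\; (1-e_S)\, F[G],
\end{equation*}
which immediately shows that $e_S\, F[G]$ is projective as a right $F[G]$-module.

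Next I would exhibit an isomorphism $F[S\backslash G] \isomorph e_S\, F[G]$ of right $F[G]$-modules by sending a coset $Sg$ to $e_S g$. This is well-defined because $e_S\, sg = e_S g$ for every $s \in S$, and it is $G$-equivariant by construction. The inverse is given by $e_S g \mapsto Sg$: if $e_S g_1 = e_S g_2$ then as multisets $\{sg_1 : s\in S\} = \{sg_2 : s\in S\}$, which forces $Sg_1 = Sg_2$. Hence the two right $F[G]$-modules are isomorphic, and $F[S\backslash G]$ inherits projectivity.

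Finally, finite generation is immediate and, indeed, the module is cyclic: the element $S \cdot 1 \in F[S\backslash G]$ generates everything under the right $G$-action, since $(S)\cdot g = Sg$ runs over all cosets. The only genuine input is the existence of the idempotent $e_S$, so the mild obstacle is just to note why the hypothesis $\operatorname{char}(F)=0$ is needed — it ensures $|S|$ is invertible in $F$, without which the averaging operator would not exist and the splitting of $F[G]$ as above would fail in general.
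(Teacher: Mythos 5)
Your proof is correct and takes essentially the same approach as the paper: the paper splits the projection $F[G]\to F[S\backslash G]$ via the section $Sg\mapsto \frac{1}{|S|}\sum_{s\in S}sg$, which is exactly your map $Sg\mapsto e_S g$, so the two arguments differ only in whether the averaging element is packaged as an idempotent decomposition of $F[G]$ or as a splitting of the short exact sequence.
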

 \begin{proof}
    Let $p\colon F[G] \to F[S\backslash G]$ be the canonical projection.
    The short exact sequence of right $F[G]$-modules
    \begin{equation*}
       0 \:\longrightarrow \: \ker(p) \:\longrightarrow \:F[G] \:\stackrel{p}{\longrightarrow} \:F[S\backslash G] \:\longrightarrow \: 0
    \end{equation*}
    is split via the morphism $F[S\backslash G] \to F[G]$ which send $Sg$ to $\frac{1}{|S|}\sum_{s\in S} sg$.
 \end{proof}

 Let $Q$ be a f.g.\ projective $\bbC[G]$-module
 Let $\phi$ be a character of $G$ and let $\calH_\phi$ be the associated tracial Hilbert $G$-bimodule.
 Then $Q\otimes_{\bbC[G]}\calH_\phi$ is a finitely generated Hilbert $\calN_\phi$-module.
 Indeed, let $Q\oplus Q' \cong \bbC[G]^n$, then we can identify $Q\otimes_{\bbC[G]}\calH_\phi$
  with a closed subspace of $\calH_\phi^n = \bbC[G]^n \otimes_{\bbC[G]} \calH_\phi$.
 
 Let $f\colon Q_1 \to Q_2$ be a homomorphism between f.g.\ projective $\bbC[G]$-modules.
 We obtain a bounded linear $G$-equivariant operator 
 \begin{equation*}
 f_{\phi} = f\otimes\id\colon Q_1\otimes_{\bbC[G]}\calH_\phi \to Q_2\otimes_{\bbC[G]}\calH_\phi.
 \end{equation*}
 Its kernel $\ker(f_\phi)$ and the closure of its image $\overline{\im(f_\phi)}$ are 
 f.g.\ Hilbert $\calN_\phi$-modules. In particular, we can consider the von Neumann dimensions
 \begin{align*}
     \nulli_\phi(f) := \dim_{\calN_\phi} \ker(f_\phi),\\
     \rnk_\phi(f) := \dim_{\calN_\phi} \overline{\im(f_\phi)},
 \end{align*}
 which we will call the $\phi$-nullity and $\phi$-rank of $f$ respectively.
 In particular, the $\phi$-nullity and $\phi$-rank of matrix $A \in M_{m,n}(\bbC[G])$ are defined
 as the $\phi$-nullity and $\phi$-rank of the left multiplication map $l_A\colon \bbC[G]^n \to \bbC[G]^m$.
 In this case Lemma \ref{lem:homomorphiesatz} yields the useful identity
 \begin{equation}\label{eq:dim-formula}
      \rnk_\phi(A) + \nulli_\phi(A) = n
 \end{equation}
 for every character $\phi$ of $G$.
 The next lemma allows us to restrict to the consideration of matrices in many situations.
 
 \begin{lemma}\label{lem:existenceOfMatrices}
 Let $R\subseteq \bbC$ be a subring.
 Let $f\colon Q_1\to Q_2$ be a homomorphism of f.g.\ projective $R[G]$-modules.
 There are matrices $A$ and $B$ over the group ring $R[G]$
 such that $\rnk_\phi(f) = \rnk_\phi(A)$ and $\nulli_\phi(f) = \nulli_\phi(B)$ for all characters $\phi$.
 \end{lemma}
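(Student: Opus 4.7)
The plan is to reduce everything to the matrix case by exploiting the splittings guaranteed by projectivity. Since $Q_1$ and $Q_2$ are finitely generated projective right $R[G]$-modules, I choose complements and isomorphisms $Q_1 \oplus Q_1' \cong R[G]^n$ and $Q_2 \oplus Q_2' \cong R[G]^m$. Let $j\colon Q_1 \hookrightarrow R[G]^n$ and $i\colon Q_2 \hookrightarrow R[G]^m$ be the (split) inclusions, with left inverses $p\colon R[G]^n \to Q_1$ and $r\colon R[G]^m \to Q_2$. The key structural fact I will use repeatedly is that split injections of $R[G]$-modules remain injective after applying $- \otimes_{\bbC[G]} \calH_\phi$ and identify $Q_k \otimes_{\bbC[G]} \calH_\phi$ with a closed Hilbert $\calN_\phi$-submodule of $\calH_\phi^n$ or $\calH_\phi^m$ --- which is precisely the submodule whose von Neumann dimension computes $\dim_{\calN_\phi}$.

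For the $\phi$-rank I would set $A = i \circ f \circ p \colon R[G]^n \to R[G]^m$, which is left multiplication by a matrix over $R[G]$. The map $p_\phi$ surjects $\calH_\phi^n$ onto $Q_1 \otimes_{\bbC[G]} \calH_\phi$, so $\im(A_\phi) = i_\phi(\im(f_\phi))$. Since $i$ is split, $i_\phi$ is a morphism of Hilbert $\calN_\phi$-modules that is an isomorphism onto a closed submodule; in particular it preserves closures and von Neumann dimensions, so $\rnk_\phi(A) = \rnk_\phi(f)$.

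For the $\phi$-nullity I would enlarge the target and define $B \colon R[G]^n \to R[G]^{m+n}$ by
\begin{equation*}
   B(v) \;=\; \bigl( i(f(p(v))),\; v - j(p(v)) \bigr).
\end{equation*}
The second component is the projection of $v$ onto the $Q_1'$-summand of $R[G]^n$. A direct check shows $\ker(B) = j(\ker(f))$: vanishing of the second component forces $v \in j(Q_1)$, and then the first component reduces to $i(f(p(v)))$, which is zero iff $f(p(v)) = 0$ because $i$ is injective. The same computation repeats after tensoring with $\calH_\phi$, since both $i_\phi$ and $j_\phi$ remain injective onto closed Hilbert $\calN_\phi$-submodules; this yields $\ker(B_\phi) = j_\phi(\ker(f_\phi))$, and because $j_\phi$ preserves the von Neumann dimension, $\nulli_\phi(B) = \nulli_\phi(f)$ for every character $\phi$.

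The only potential subtlety --- and what makes the argument go through --- is that the functor $- \otimes_{\bbC[G]} \calH_\phi$ is not exact in general, so algebraic identities between kernels and images cannot be transferred across arbitrary inclusions or surjections. The use of splittings avoids this entirely: tensoring is exact on split short exact sequences, which is why the identities $\im(A) = i(\im(f))$ and $\ker(B) = j(\ker(f))$ descend verbatim to the Hilbert module setting. I expect this transfer step to be the only place demanding care; the rest is bookkeeping.
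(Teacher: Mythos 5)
Your proof is correct and takes essentially the same approach as the paper: choose splittings $Q_k \oplus Q_k' \cong R[G]^{n_k}$ and transport $f$ to a map between free modules. Your $A = i \circ f \circ p$ is exactly the paper's $\alpha(x,y)=(f(x),0)$, and your explicit map $B(v) = \bigl(i(f(p(v))),\, v - j(p(v))\bigr)$ is a clean way to carry out the paper's ``the statement for the kernel follows analogously,'' correctly arranging that tensoring with $\calH_\phi$ commutes with the relevant kernel/image identifications because all the auxiliary inclusions and projections are split.
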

 \begin{proof}
 Take f.g.\ projective $R[G]$-modules $Q'_1$ and $Q'_2$ such that
 $Q_1\oplus Q'_1 \cong R[G]^n$ and $Q_2 \oplus Q'_2 \cong R[G]^m$ for some integers $n$ and $m$.
 Define $\alpha\colon Q_1\oplus Q'_1 \to Q_2 \oplus Q'_2$ by $\alpha(x,y) = (f(x),0)$.
 Then $\overline{\im(f_\phi)} \cong \overline{\im(\alpha_\phi)}$ as Hilbert $\calN_\phi$-modules for every character~$\phi$.
 Take $A$ to be the matrix of $\alpha$ with respect to some basis. 
 The statement for the kernel follows analogously.
 \end{proof}
 
 \begin{definition}\label{def:phi-Bettinumber}
 Let
 \begin{equation*}
     Q_\bullet\colon \dots \longrightarrow Q_{n+1} \stackrel{\partial_{n+1}}{\longrightarrow} Q_{n} \stackrel{\partial_{n}}{\longrightarrow} Q_{n-1} \longrightarrow \dots
 \end{equation*}
  be a chain complex of $\bbC[G]$-modules and assume that each $Q_i$ is finitely generated and projective.
  We define the $n$-th $\phi$-\emph{Betti number} of $Q_\bullet$ to be
  \begin{equation*}
       b^\phi_n(Q_\bullet) = \nulli_\phi(\partial_{n}) - \rnk_\phi(\partial_{n+1}).
  \end{equation*}
  \end{definition}
 The objective of our investigations is to understand how the rank, nullity and Betti numbers vary with the 
 character $\phi$.

\subsection{The space of all characters}

Let $\Ch(G)$ denote the set of all characters of $G$. With the topology of pointwise convergence $\Ch(G)$ is a compact subset of the space of class functions on $G$. Here we briefly discuss the structure of this space.

The character space $\Ch(G)$ is convex. The extremal points are called \emph{indecomposable characters}. 
In fact, a character $\phi$ is indecomposable exactly if the corresponding von Neumann algebra $\calN_\phi$ is a factor. 
We note that the pointwise product $(\phi\psi)(g) = \phi(g)\psi(g)$ of characters $\phi,\psi\in\Ch(G)$ is again a character.

A map between character spaces which preserves convex combinations will be called \emph{affine}. A map which preserves products of characters 
is called \emph{multiplicative}.
Let $\alpha\colon G \to H$ be a homomorphism of groups.
We can pullback $\phi \in \Ch(H)$ along $\alpha$ to obtain a character $\alpha^*(\phi)$ of $G$, i.e.\ $\alpha^*(\phi)(g) = \phi(\alpha(g))$. 
The pullback of characters defines an affine, multiplicative, continuous map $\alpha^*\colon \Ch(H) \to \Ch(G)$ between character spaces.
There are two important special cases of this construction.
(1) Suppose that $G$ is a subgroup of $H$ and $\alpha$ is the inclusion homomorphism, then the pullback is called the \emph{restriction map}
$\res^H_G := \alpha^*$. (2) Suppose $\alpha\colon G \to H$ is a quotient homomorphism, then the pullback map is called the 
\emph{inflation} $\infl^G_{H} := \alpha^*$. 
 
 \subsection{A norm on matrices over $\bbC[G]$}
 In this section we introduce a norm on the $m\times n$-matrices $M_{m,n}(\bbC[G])$ over the group ring.
 \begin{lemma}[cf.\ 2.5 in \cite{Luck1994}]\label{lem:universalbound}
  For every matrix $A \in M_{m,n}(\bbC[G])$ there is a constant $c_A > 0$ such that for every
  character $\phi\in\Ch(G)$
  \begin{equation*}
       \norm{l_A}_\phi \leq c_A
  \end{equation*}
  where $\norm{l_A}_\phi$ denotes the operator norm of the multiplication operator $l_A$ on $\calH_\phi^n$.
 \end{lemma}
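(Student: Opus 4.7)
The strategy is to exploit the fact that each element $g \in G$ acts on $\calH_\phi$ by a unitary operator, by the very definition of a tracial Hilbert $G$-bimodule. Hence the left multiplication operator by any $g\in G$ has operator norm exactly $1$ on $\calH_\phi$, and this bound is uniform in $\phi \in \Ch(G)$. All the work consists in propagating this elementary estimate through $\bbC[G]$ and then through matrices.

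First I would bound individual matrix entries. Write each $a_{ij} = \sum_{g \in G} \alpha_{ij,g}\, g$ as a finite $\bbC$-linear combination of group elements. Since the operators $\ell(g)$ are unitary for all $g$, the triangle inequality immediately gives $\norm{l_{a_{ij}}}_\phi \leq \sum_{g} |\alpha_{ij,g}| =: \norm{a_{ij}}_1$, a quantity depending only on $A$ and not on $\phi$.

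Next I would assemble the full matrix bound. For any vector $v=(v_1,\dots,v_n)\in\calH_\phi^n$ one has $(l_A v)_i = \sum_j a_{ij}\cdot v_j$ in $\calH_\phi^m$, so the triangle inequality applied coordinatewise together with Cauchy--Schwarz yields the estimate $\norm{l_A v}^2 \leq \sum_i \bigl(\sum_j \norm{a_{ij}}_1 \norm{v_j}\bigr)^2 \leq \bigl(\sum_{i,j} \norm{a_{ij}}_1^2\bigr)\,\norm{v}^2$. One may therefore take $c_A := \bigl(\sum_{i,j}\norm{a_{ij}}_1^2\bigr)^{1/2}$, which is manifestly independent of $\phi$.

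I do not expect a serious obstacle: the whole argument rests on the unitarity of the left $G$-action, which is built into axiom~(\ref{it:bimodule1})--(\ref{it:bimodule3}) of a tracial Hilbert $G$-bimodule. In particular the bound holds uniformly in $\phi$ precisely because this unitarity is automatic and does not depend on any spectral feature of the specific $\calH_\phi$; in fact the same argument shows that the $\ell^1$-norm $\norm{A}_1 := \sum_{i,j}\norm{a_{ij}}_1$ also serves as an admissible (cruder) choice for $c_A$.
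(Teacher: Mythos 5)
Your proof is correct and rests on the same key observation as the paper's: the left action of each group element $g$ on $\calH_\phi$ is unitary, so the triangle inequality gives a bound on $\norm{l_A}_\phi$ depending only on the coefficients of $A$, uniformly in $\phi$. The paper assembles the bound by writing $A$ as a $\bbC$-linear combination of elementary matrices with a single group-element entry (each a partial isometry) and applying the triangle inequality once, giving $c_A = \sum_{i,j}\norm{a_{ij}}_1$; your entrywise estimate plus Cauchy--Schwarz is a harmless variant of the same idea, yielding the slightly sharper constant $\bigl(\sum_{i,j}\norm{a_{ij}}_1^2\bigr)^{1/2}$.
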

 \begin{proof}
 Write $A = \sum_{i=1}^r b_i U_i$ with $b_i \in \bbC$ and where the $U_i$ are matrices with precisely one non-zero entry of the form $g\in G$.
 Then $c_A = \sum_{i=1}^r |b_i|$ is as required, since the $U_i$ act as partial isometries on every Hilbert $G$-bimodule.
 \end{proof}
 
 \begin{deflem}\label{lem:charactersup-norm}
 For $A \in M_{m,n}(\bbC[G])$ the formula
 \begin{equation*}
    \norm{A}_\infty := \sup_{\phi \in \Ch(G)} \norm{l_A}_{\phi}
 \end{equation*}
 defines a norm on $M_{m,n}(\bbC[G])$, which will be called the \emph{character sup-norm}.
 The character sup-norm is submultiplicative, i.e.\ 
 for all $A \in M_{m,n}(\bbC[G])$ and $B \in M_{n,k}(\bbC[G])$ we have
 \begin{equation*}
        \norm{AB}_\infty \leq \norm{A}_\infty \norm{B}_\infty.
 \end{equation*}
 \end{deflem}
  \begin{proof}
   By Lemma \ref{lem:universalbound} $\norm{A}_\infty$ is finite.
   Hence, it is clear that the formula defines a semi-norm. Moreover, for the regular character $\delta^{(2)}$
   the induced map from the group algebra $\bbC[G]$ into the group von Neumann algebra $\calN(G)$ is injective,
   thus $\norm{l_A}_{\delta^{(2)}}=0$ exactly if $A = 0$. We conclude that the character sup-norm is indeed a norm.
   The submultiplicativity is immediate from the fact that the operator norms are submultiplicative.
 \end{proof}
 
 Let $\pos_n(\bbC[G])$ denote the set of self-adjoint $(n\times n)$-matrices over the group algebra $\bbC[G]$
 such that the left multiplication operator $l_{A}\in \calB(\calH_\phi^n)$ is positive for every character $\phi$.
 Observe that $\pos_n(\bbC[G])$ contains all matrices of the form $B^*B$, is closed under positive linear combinations and 
 is closed in the topology of the character sup-norm.
 
 Let $\alpha\colon G \to H$ be a homomorphism of groups, then
 $\alpha$ induces a ring homomorphism $\bbC[G] \to \bbC[H]$ which will still be denoted by $\alpha$.
 The induced maps on matrices over $\bbC[G]$ will be denoted by $\alpha$ as well.
 
 \begin{lemma}\label{lem:sup-normFunctoriality}
   Let $\alpha\colon G \to H$ be a group homomorphism. For all $A \in M_n(\bbC[G])$ we have the inequality $\norm{\alpha(A)}_\infty \leq \norm{A}_\infty$
   and $\alpha\left(\pos_n(\bbC[G])\right) \subseteq \pos_n(\bbC[H])$.
 \end{lemma}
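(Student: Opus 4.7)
The plan is to reduce both claims to the single norm identity
\[
\|l_{\alpha(A)}\|_\phi \;=\; \|l_A\|_{\alpha^*(\phi)}
\]
valid for every $\phi\in\Ch(H)$ and every square matrix $A\in M_n(\bbC[G])$; writing $\psi:=\alpha^*(\phi)\in\Ch(G)$, both parts of the lemma then follow very quickly.

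The first step is to prove this identity by a direct trace computation. On the finite von Neumann algebra $M_n(\calN_\phi)$ the natural (unnormalized) trace $\tau_n(B)=\sum_i\tau(B_{ii})$ is faithful, so every positive element $T$ satisfies $\|T\|=\lim_{k\to\infty}\tau_n(T^k)^{1/k}$ by the spectral theorem applied to the spectral measure of $T$. I apply this with $T=l_{\alpha(A)}^*l_{\alpha(A)}=l_{\alpha(A^*A)}$. Since $\alpha$ is a ring homomorphism, $T^k=l_{\alpha((A^*A)^k)}$, and the identity $\tau(l_x)=\phi(x)$ for $x\in\bbC[H]$ recalled in Section~\ref{sec:BettiNumbers} gives
\[
\tau_n(T^k)=\sum_{i=1}^n\phi\bigl(\alpha(((A^*A)^k)_{ii})\bigr)=\sum_{i=1}^n\psi\bigl(((A^*A)^k)_{ii}\bigr)=\tau_{n,\psi}\bigl(l_{A^*A}^k\bigr),
\]
where $\tau_{n,\psi}$ is the analogous trace on $M_n(\calN_\psi)$. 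Taking $k$-th roots and letting $k\to\infty$ yields $\|l_{\alpha(A)}\|_\phi^2=\|l_A\|_\psi^2$, which is the desired identity.

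The first claim is now immediate: taking suprema over $\phi\in\Ch(H)$ and using $\alpha^*(\Ch(H))\subseteq\Ch(G)$,
\[
\|\alpha(A)\|_\infty=\sup_{\phi\in\Ch(H)}\|l_{\alpha(A)}\|_\phi=\sup_{\phi\in\Ch(H)}\|l_A\|_{\alpha^*(\phi)}\leq\sup_{\psi\in\Ch(G)}\|l_A\|_\psi=\|A\|_\infty.
\]
For the positivity claim, fix $A\in\pos_n(\bbC[G])$ and $\phi\in\Ch(H)$, and set $M:=\|l_A\|_{\alpha^*(\phi)}$. Since $l_A\geq0$ with norm $M$, its spectrum lies in $[0,M]$, hence $\|MI-l_A\|_{\alpha^*(\phi)}\leq M$. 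Applying the identity of the first step to the self-adjoint matrix $MI_n-A$ gives $\|MI-l_{\alpha(A)}\|_\phi\leq M$, and the identity also gives $\|l_{\alpha(A)}\|_\phi=M$. Since $l_{\alpha(A)}$ is self-adjoint (as $A=A^*$), the two conditions $\|l_{\alpha(A)}\|_\phi=M$ and $\|MI-l_{\alpha(A)}\|_\phi\leq M$ force the spectrum of $l_{\alpha(A)}$ into $[0,2M]$, and in particular into $[0,\infty)$; so $l_{\alpha(A)}\geq0$ on $\calH_\phi^n$, which means $\alpha(A)\in\pos_n(\bbC[H])$.

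The only non-trivial technical ingredient is the spectral/trace identity $\|T\|=\lim_k\tau(T^k)^{1/k}$ for positive $T$ in a finite von Neumann algebra with faithful trace; the rest is matrix bookkeeping and elementary spectral analysis. An alternative route avoiding this limit formula would be to observe that the cyclic $G$-subbimodule of $\calH_\phi$ generated by the trace vector $e$ (under $G$ acting via $\alpha$) is a copy of $\calH_\psi$ by Proposition~\ref{prop:Thoma}, and then invoke the fact that $e$ is a separating vector for $\calN_\phi$ to deduce that the restriction $C^*$-homomorphism onto this subbimodule is isometric, yielding the same norm identity.
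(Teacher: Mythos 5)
Your proof is correct, and it takes a genuinely different route from the paper's. The paper proves the norm identity $\norm{l_{\alpha(A)}}_\phi=\norm{l_A}_{\alpha^*(\phi)}$ by an explicit module-theoretic construction: it realizes $\calH_{\alpha^*(\phi)}$ as the closed $G$-subbimodule $V\subseteq\calH_\phi$ generated by the trace vector $e$ via $\alpha$, notes that $V^n$ is reducing for $l_{\alpha(A)}$, and argues that since the right $H$-translates of $V^n$ span a dense subspace, the operator norm is unchanged upon restriction; the positivity claim is handled the same way. (Making that step fully rigorous requires a small projection-lattice argument using that the projections onto the translates commute with $l_{\alpha(A)}$ and sup to the identity.) You instead derive the same norm identity from the spectral-radius formula $\norm{T}=\lim_k\tau_n(T^k)^{1/k}$ for positives in a finite von Neumann algebra with faithful trace, reducing everything to the transparent trace identity $\tau_\phi\circ\alpha=\tau_{\alpha^*(\phi)}$ on the group algebra; this neatly isolates the actual reason the norm transforms covariantly and avoids the density argument. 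Your treatment of positivity via the shifted matrix $MI_n-A$ and a spectral-containment argument $\sigma(l_{\alpha(A)})\subseteq[-M,M]\cap[0,2M]$ is also a clean alternative. Your closing remark correctly identifies the paper's construction and a tidy justification for it (the injective $*$-homomorphism of $C^*$-algebras obtained by restricting to the cyclic subbimodule is isometric because $e$ is separating for $\calN_\phi$). The only quibble is a minor citation slip: the identity $\tau(l_x)=\phi(x)$ is a restatement of how the character is defined from the bimodule in the subsection on characters, not something specifically recalled in the Betti-number subsection.
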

 \begin{proof}
 Let $\psi$ be a character of $H$ and let $\phi=\alpha^*(\psi)$.
 Consider the tracial Hilbert $H$-bimodule $(\calH_\psi,e)$ and define $V \subseteq \calH_\psi$ to be the closure of the subspace 
 spanned by $\{\: e\cdot\alpha(g) \:|\: g\in G \:\}$. The actions of $G$ by $\alpha(G)$ make $V$ into a tracial Hilbert $G$-bimodule
 of character $\phi$. By Proposition \ref{prop:Thoma} the bimodules $V$ and $\calH_\phi$ are isomorphic and 
 the operator norm of $l_{A}$ on $V^n$ agrees with the norm $\norm{l_A}_\phi$. 
 
 The right action by $H$ on $\calH_\psi^n$ commutes with $l_{\alpha(A)}$ and the space spanned by all the isometric translates $\{V^n\cdot h \:|\: h\in H\}$
 is dense in $\calH^n_\psi$. We deduce that
 \begin{equation*}
    \norm{l_{\alpha(A)}}_\psi = \norm{(l_{\alpha(A)})_{|V^n}}_\psi = \norm{l_A}_\phi,
 \end{equation*}
 and thus $\norm{\alpha(A)}_\infty \leq \norm{A}_\infty$.
 
 Suppose now that $A \in \pos_n(\bbC[G])$. Then $l_{\alpha(A)}$ is a positive self-adjoint operator on $V^n$ and as above 
 this holds on the translates $V^n\cdot h$ for all $h\in H$. As the translates span a dense subspace of $\calH_\psi^n$ the claim follows.
 \end{proof}

 \subsection{The spectral measure map}
 We will use the measure theoretic terminology from \cite[VIII]{Elstrodt2005}.
 In particular, a \emph{finite Radon measure} $\nu$ on $\bbR$ is a finite regular measure on the Borel sigma algebra of $\bbR$.
 We say that $\nu$ is \emph{compactly supported}, if there is a compact subset $C\subseteq \bbR$ such that $\nu(\bbR\setminus C) =0$.
 The set of finite compactly supported Radon measures on $\bbR$ will be denoted by $\calM^+_c(\bbR)$.
  
  \medskip 

 Given $A \in \pos_n(\bbC[G])$ and a character $\phi\in \Ch(G)$, we can associate to it the spectral measure $\mu_{A,\phi}$ of the self-adjoint left multiplication  operator $l_{A,\phi}$ on $\calH^n_\phi$ with respect to the $\calN_\phi$-trace.
 This means, $\mu_{A,\phi}$ is the unique finite Radon measure on $\bbR$ supported on the spectrum $\sigma(l_{A,\phi})$ such that for every 
 continuous function $f\in C(\bbR)$ the following formula holds:
 \begin{equation*}
      \Tr_{\calN_\phi}(f(l_{A,\phi})) = \int_{\bbR} f(t) d\mu_{A,\phi}(t).
 \end{equation*}
 
 \begin{theorem}\label{thm:spectralMeasureMap}
 The spectral measure map 
 \begin{equation*}
           \mu\colon \pos_n(\bbC[G])\times \Ch(G) \to \calM^+_c(\bbR)
 \end{equation*}
 which maps $(A,\phi)$ to $\mu_{A,\phi}$ is continuous with respect to
 \begin{itemize}
   \item the character sup-norm topology on $\pos_n(\bbC[G])$,
   \item the topology of pointwise convergence on $\Ch(G)$ and
   \item the topology of weak convergence on $\calM_c^+(\bbR)$.
 \end{itemize}
 Moreover, the map is affine in the second variable. 
 If $\alpha\colon G \to H$ is a group homomorphism, then
    $\mu_{\alpha(A),\psi} = \mu_{A, \alpha^*(\psi)}$
 for all $A \in \pos_n(\bbC[G])$ and $\psi \in \Ch(H)$.
 \end{theorem}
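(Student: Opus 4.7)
The plan is to route everything through moments. For $k \in \bbN$, the $k$-th moment satisfies
\begin{equation*}
F_k(A,\phi) := \int_\bbR t^k\, d\mu_{A,\phi}(t) = \Tr_{\calN_\phi}(l_{A^k,\phi}) = \sum_{i=1}^n \phi\bigl((A^k)_{ii}\bigr),
\end{equation*}
where $\phi$ is extended $\bbC$-linearly to $\bbC[G]$ and we use that $A \mapsto l_{A,\phi}$ is a ring homomorphism. I would first establish joint continuity of each $F_k$, then deduce weak convergence via Stone--Weierstrass, and finally read off affineness and functoriality directly from this formula.

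For joint continuity, given $(A_m,\phi_m) \to (A,\phi)$, I would split the difference $F_k(A_m,\phi_m) - F_k(A,\phi)$ as
\begin{equation*}
\bigl(F_k(A_m,\phi_m) - F_k(A,\phi_m)\bigr) + \bigl(F_k(A,\phi_m) - F_k(A,\phi)\bigr).
\end{equation*}
The second summand tends to $0$ because for fixed $A$ the element $(A^k)_{ii}$ has finite support in $G$, so $F_k(A,\cdot)$ is a finite $\bbC$-linear combination of pointwise evaluations $\phi \mapsto \phi(g)$. For the first summand the crucial ingredient is the $\phi$-uniform estimate $|\Tr_{\calN_\phi}(l_{B,\phi})| \leq n\,\norm{B}_\infty$ for every $B \in M_n(\bbC[G])$, which follows from $|\tau(a)| = |\langle a\cdot e, e\rangle| \leq \norm{a}$ applied to each diagonal compression $l_{B_{ii}}$ of $l_B$, combined with the trivial bound $\norm{l_{B_{ii}}}_\phi \leq \norm{l_B}_\phi \leq \norm{B}_\infty$. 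Together with submultiplicativity of the character sup-norm from~\ref{lem:charactersup-norm}, this yields $|F_k(A_m,\phi_m) - F_k(A,\phi_m)| \leq n\norm{A_m^k - A^k}_\infty \to 0$.

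To upgrade moment convergence to weak convergence, I would note that $l_{A,\phi}$ is positive self-adjoint with spectrum in $[0, \norm{A}_\infty]$, so $\mu_{A,\phi}$ is supported there with total mass $\mu_{A,\phi}(\bbR) = \Tr_{\calN_\phi}(\id_{\calH_\phi^n}) = n$. When $(A_m,\phi_m) \to (A,\phi)$, the norms $\norm{A_m}_\infty$ are uniformly bounded by some $R$, so all the measures in question lie in the fixed compact $K = [0,R]$. Stone--Weierstrass density of polynomials in $C(K)$ together with the uniform mass bound then converts moment convergence into weak convergence by a standard $3\varepsilon$-argument. Affineness in $\phi$ is immediate from $\bbC$-linearity of the moment formula, and the functoriality $\mu_{\alpha(A),\psi} = \mu_{A, \alpha^*(\psi)}$ follows because $\alpha$ is a ring homomorphism, so $\alpha(A)^k = \alpha(A^k)$ entrywise and $\psi\bigl(\alpha((A^k)_{ii})\bigr) = \alpha^*(\psi)\bigl((A^k)_{ii}\bigr)$; in both cases one appeals to the fact that a compactly supported Borel measure on $\bbR$ is determined by its moments.

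The main obstacle is nothing deep but rather arranging the trace estimate to be uniform in $\phi$. This is precisely what singles out the character sup-norm as the right topology on $\pos_n(\bbC[G])$: a weaker norm would not control the operator norms $\norm{l_B}_\phi$ simultaneously across all $\phi$, and the splitting above would fail to collapse.
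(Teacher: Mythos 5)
Your proposal is correct and follows essentially the same route as the paper: Stone--Weierstrass to reduce weak convergence to convergence of moments, the triangle-inequality splitting of $F_k(A_m,\phi_m)-F_k(A,\phi)$ into a term controlled by the uniform trace bound $|\Tr_{\calN_\phi}(l_B)|\leq n\norm{B}_\infty$ together with submultiplicativity of the character sup-norm, and a term controlled by pointwise convergence of characters via the finite-support observation. The only cosmetic difference is that the paper invokes the pushforward identity $f_*(\mu_{B,\eta})=\mu_{B^k,\eta}$ to reduce to $k=1$ before splitting, whereas you handle all $k$ at once via $F_k(A,\phi)=F_1(A^k,\phi)$; these are the same reduction in disguise.
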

 \begin{proof}
 The following remark is in order: If the group $G$ is countable, then topology on the domain is metrizable.
 In this case the proof can be simplified by replacing nets by sequences.
 
 We show that for every convergent net $(A_j,\phi_j)_{j \in J} \to (A,\phi)$ in $\pos_n(\bbC[G])\times \Ch(G)$ over some directed set $J$, we have a weakly convergent net
  of measures
  \begin{equation*}
      \mu_{A_j,\phi_j} \stackrel{w}{\longrightarrow} \mu_{A,\phi}.
  \end{equation*}
  
  Since the net$(A_j)_{j\in J}$ converges to $A$ in the character sup-norm, we can assume that the norms $\norm{A_j}_\infty$ are bounded.
  We fix a positive number $C>0$ such that $\norm{A_j}\leq C$ for all $j \in J$.
  In particular, the spectral measures of $\mu_{A,\psi}$ and $\mu_{A_j,\psi}$ are supported on $[0,C]$ for any character $\psi$.
  
  Let $f\in C_b(\bbR)$ be a bounded continuous function on $\bbR$. We have to show that
  \begin{equation}\label{eq:weakConvergence}
      \lim_{j \in J} \int_\bbR f \: d\mu_{A_j,\phi_j} = \int_\bbR f \: d\mu_{A,\phi}.
  \end{equation}
  By the theorem of Stone-Weierstra{\ss}  we can approximate $f$ uniformly over the interval $[0,C]$ by polynomials.
  In particular, it is sufficient to prove \eqref{eq:weakConvergence} if $f$ is a polynomial. By linearity it is even sufficient to consider the
  case where $f(t) = t^k$ for some $k\in\bbN$.
  
  Note that the submultiplicativity of the character sup-norm (Lemma \ref{lem:charactersup-norm}) yields that the net $(A^k_j)_{j \in J}$ converges to $A^k$. 
  In addition, we have 
  \begin{equation}\label{eq:pushforwardFormula}
  f_*(\mu_{B,\eta}) = \mu_{B^k,\eta}
  \end{equation}
   (for all $B$ and $\eta$) and hence it suffices to verify the claim for $k = 1$.
  This means, we have to prove the identity
  \begin{equation*}
        \lim_{j \in J} \Tr_{\calN_{\phi_j}}(l_{A_j}) = \Tr_{\calN_\phi}(l_A).
  \end{equation*}
  With the triangle inequality we obtain
  \begin{equation*}
      \left| \Tr_{\calN_{\phi_j}}(l_{A_j})- \Tr_{\calN_\phi}(l_A)\right| \leq \left|\Tr_{\calN_{\phi_j}}(l_{A_j-A})\right| + \left| \Tr_{\calN_{\phi_j}}(l_{A}) - \Tr_{\calN_\phi}(l_{A})\right|.
  \end{equation*}
  The continuity of the trace shows that the first term satisfies
  \begin{equation*}
  \left|\Tr_{\calN_{\phi_j}}(l_{A_j-A})\right| \leq n \norm{l_{A_j- A}}_{\phi_j} \leq n \norm{A_j - A}_\infty
  \end{equation*}
  and hence it converges to zero.
  
  Now we consider the second term. By the definition of the trace it is sufficient to show that for every element $a\in \bbC[G]$
  the traces $\langle a\cdot e, e\rangle_{\phi_j}$ converge to $\langle a\cdot e, e\rangle_\phi$.
  Write $a$ as a finite linear combination of group elements, say $a = \sum_{i=1}^r \lambda_i g_i$.
  We conclude the proof of the continuity by observing
  \begin{equation*}
    \lim_{j \in J}\langle a\cdot e, e\rangle_{\phi_j} = \lim_{j \in J} \sum_{i=1}^r \lambda_i \phi_j(g_i) = 
    \sum_{i=1}^r \lambda_i \phi(g_i) = \langle a\cdot e, e\rangle_\phi.
  \end{equation*}
  
  Now we show that the spectral measure map is affine in the second variable.
  Assume that $\psi = \lambda \phi_1 + (1-\lambda) \phi_2$ 
  for $\psi,\phi_1,\phi_2 \in \Ch(G)$ and  $\lambda \in [0,1]$.
  We have to check that for every matrix $A\in \pos_n(\bbC[G])$ the spectral measures satisfy
 \begin{equation*}
 \mu_{A,\psi} = \lambda \mu_{A,\phi_1} + (1-\lambda)\mu_{A,\phi_2}.
 \end{equation*}
 By Riesz' representation theorem it is sufficient to verify that these measures integrate every continuous compactly supported function
 to the same value.
 As above, the measures are compactly supported, hence (using approximation) it is sufficient 
 to verify the claim for all monomial functions $f(t) = t^k$. 
 By \eqref{eq:pushforwardFormula} it is yet sufficient to treat the case $k=1$ which follows directly 
 from the assumption $\psi= \lambda \phi_1 + (1-\lambda) \phi_2$.
 
 Finally, the last assertion follows from exactly the same argument taking 
 into account Lemma \ref{lem:sup-normFunctoriality} which provides that $\alpha(A)$ is positive.
 \end{proof}
 
 \begin{corollary}\label{cor:PropertiesOfRank}
 Let $A \in M_{m,n}(\bbC[G])$.
 \begin{enumerate}
 \item\label{item:affine} The maps $\phi \mapsto \rnk_\phi(A)$ and $\phi \mapsto \nulli_\phi(A)$ are affine maps on $\Ch(G)$.
 \item\label{item:transformation} If $\alpha\colon G \to H$ is a group homomorphism and $\phi\in \Ch(H)$, then
 \begin{align*}
    \rnk_{\alpha*(\phi)}(A) &= \rnk_\phi(\alpha(A))\\
    \nulli_{\alpha*(\phi)}(A) &= \nulli_\phi(\alpha(A)).
 \end{align*}
 \item The rank $\rnk_\phi(A)$ is lower semi-continuous in $\phi$.
 \item The nullity $\nulli_\phi(A)$ is upper semi-continuous in $\phi$.
 \end{enumerate}
 \end{corollary}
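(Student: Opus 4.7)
The plan is to reduce all four assertions to properties of the spectral measure map from Theorem \ref{thm:spectralMeasureMap}. Given $A \in M_{m,n}(\bbC[G])$, I would form the square positive matrix $A^*A \in \pos_n(\bbC[G])$. Since $\ker(A_\phi) = \ker((A^*A)_\phi)$ inside $\calH_\phi^n$, one has
$$\nulli_\phi(A) \;=\; \nulli_\phi(A^*A) \;=\; \mu_{A^*A,\phi}(\{0\}),$$
the last equality being the standard identification of the atom of the spectral measure at $0$ with the von Neumann dimension of the kernel (via functional calculus applied to indicator functions). Combined with the dimension formula \eqref{eq:dim-formula}, this gives $\rnk_\phi(A) = n - \mu_{A^*A,\phi}(\{0\})$. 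Once this reduction is in place, each assertion follows by pulling back the corresponding property of the map $\phi \mapsto \mu_{A^*A,\phi}$.

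For (1), given $\phi = \lambda\phi_1 + (1-\lambda)\phi_2$, I would invoke the affinity statement in Theorem \ref{thm:spectralMeasureMap} to obtain $\mu_{A^*A,\phi} = \lambda\mu_{A^*A,\phi_1} + (1-\lambda)\mu_{A^*A,\phi_2}$. Evaluating on the Borel set $\{0\}$ yields affinity of $\nulli_\phi(A)$, and hence of $\rnk_\phi(A) = n - \nulli_\phi(A)$. For (2), note that the ring map $\bbC[G] \to \bbC[H]$ induced by a group homomorphism is a $*$-homomorphism, so $\alpha(A^*A) = \alpha(A)^*\alpha(A)$; Theorem \ref{thm:spectralMeasureMap} then gives $\mu_{\alpha(A)^*\alpha(A),\psi} = \mu_{A^*A,\alpha^*(\psi)}$, and evaluating at $\{0\}$ yields the transformation formula for nullity. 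Since $\alpha(A)$ and $A$ have the same number of columns, the rank formula again follows from \eqref{eq:dim-formula}.

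For (3) and (4), the subtle point is that weak convergence of measures does not generally preserve the mass of atoms. I would proceed by a standard cutoff argument. Let $\phi_j \to \phi$ be a convergent net in $\Ch(G)$; by Theorem \ref{thm:spectralMeasureMap}, $\mu_{A^*A,\phi_j} \stackrel{w}{\longrightarrow} \mu_{A^*A,\phi}$. For each $\epsilon > 0$ pick a continuous cutoff $f_\epsilon\colon\bbR \to [0,1]$ supported in $(-\epsilon,\epsilon)$ with $f_\epsilon \equiv 1$ on a neighborhood of $0$; then
$$
\nulli_{\phi_j}(A) \;\leq\; \int_\bbR f_\epsilon\, d\mu_{A^*A,\phi_j} \;\longrightarrow\; \int_\bbR f_\epsilon\, d\mu_{A^*A,\phi} \;\leq\; \mu_{A^*A,\phi}\bigl((-\epsilon,\epsilon)\bigr).
$$
Thus $\limsup_j \nulli_{\phi_j}(A) \leq \mu_{A^*A,\phi}((-\epsilon,\epsilon))$, and letting $\epsilon \to 0^+$ together with continuity of measure from above gives upper semi-continuity of $\nulli_\phi(A)$, which is assertion (4). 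Assertion (3) is then immediate from \eqref{eq:dim-formula}. The main obstacle is exactly this treatment of the atom at $0$: the spectral measure map supplies only weak convergence, while the nullity is a point mass, and the cutoff step is what bridges the gap.
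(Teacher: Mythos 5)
Your proposal is correct and follows the same reduction as the paper: pass to the positive matrix $A^*A$, identify $\nulli_\phi(A)$ with the atom $\mu_{A^*A,\phi}(\{0\})$, and deduce everything from Theorem~\ref{thm:spectralMeasureMap} together with the dimension formula~\eqref{eq:dim-formula}. The only cosmetic difference is in assertions (3)--(4), where you reprove the relevant half of the Portmanteau theorem by hand with a continuous cutoff (together with continuity of a finite measure from above), whereas the paper simply cites the Portmanteau theorem for closed sets.
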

 \begin{proof}
 By the dimension formula \eqref{eq:dim-formula} it is sufficient to consider the nullity in all assertions.
 Moreover, passing from $A$ to $A^*A$ we can assume that $A \in \pos_n(\bbC[G])$.
 Note that $\nulli_\phi(A) = \mu_{A,\phi}(\{0\})$ and the first two assertions follow directly from Theorem \ref{thm:spectralMeasureMap}.
 
 Let $(\phi_j)_{j \in J}$ be a net of characters converging to $\phi \in \Ch(G)$.
 By Theorem \ref{thm:spectralMeasureMap} we have weak
 convergence of spectral measures $\mu_{A,\phi_j} \stackrel{w}{\to} \mu_{A,\phi}$.
 The set $\{0\}$ is closed in $\bbR$, so the Portmanteau-Theorem (see \cite[VIII.~4.10]{Elstrodt2005} or for nets \cite[Thm.~1.3.4]{VaartWellner})
 implies
 \begin{equation*}
    \limsup_{j\in J} \: \nulli_{\phi_j}(A) = \limsup_{j \in J} \: \mu_{A,\phi_j}(\{0\}) \leq \mu_{A,\phi}(\{0\}) = \nulli_\phi(A). 
 \end{equation*} 
  \end{proof}
  
  \begin{example}
    In general the rank and nullity are \emph{not} continuous with respect to the character.
    This can be seen in the following simple example.
    Let $G = \langle g \rangle$ be an infinite cyclic group with generator $g$. 
    For every $z \in S^1 \subseteq \bbC$ there is a character $\hat{z}$ of $G$ given by
    $\hat{z}(g^k) = z^k$. The corresponding tracial Hilbert $G$-bimodule is a $1$-dimensional space
    on which $g$ acts by multiplication with $z$.
    
    Let $z_m \in S^1$ be a sequence of points on the unit circle which converges to~$1$.
    Clearly, the characters $\hat{z}_m$ converge pointwise to the trivial character $\triv = \hat{1}$.
    Assume $z_m \neq 1$ for all $m$ and consider the group ring element $a = 1 - g$.
    The ranks satisfy $\rnk_{\hat{z}_m}(a) = 1$ for all $m \in \bbN$, however $\rnk_{\triv}(a) = 0$.
  \end{example}

\begin{definition}\label{def:approximationProperty}
A convergent net of characters $(\phi_j)_{j \in J}$ with limit $\phi \in \Ch(G)$ is said to have
\emph{the approximation property} with respect a matrix $A \in M_{m,n}(\bbC[G])$ if
\begin{equation*}
 \lim_{j \in J} \rnk_{\phi_j}(A) = \rnk_\phi(A).
\end{equation*}
By \eqref{eq:dim-formula} this is equivalent to $\lim_{j \in J} \nulli_{\phi_j}(A) = \nulli_\phi(A)$.
\end{definition}

\subsection{The Fuglede-Kadison determinant}
In order to prove the approximation property for a sequence of characters one needs to control the spectrum close to zero. 
This can be achieved (as in L\"uck's approximation theorem) by bounding the Fuglede-Kadison determinant.
For an introduction to Fuglede-Kadison determinants we refer to \cite[3.2]{LuckBook}. Here we use a more measure theoretic terminology.

\begin{definition}
Let $\nu \in \calM^+_c(\bbR)$ be a finite, compactly supported Radon measure.
The \emph{Fuglede-Kadison determinant} of $\nu$ is
\begin{equation*}
    \det(\nu) = \exp \left( \int_{\bbR^\times} \ln|t| \: d\nu(t) \right)
\end{equation*}
with the convention that $\det(\nu) = 0$ if the integral diverges to $-\infty$.
\end{definition}

\begin{lemma}\label{lem:boundedDeterminant}
Let $(\nu_j)_{j \in J}$ be a net of measures in $\calM^+_c(\bbR)$ with supports in a compact interval $[0,C]$.
Assume that $(\nu_j)_{j\in J}$ converges weakly to
$\nu \in \calM^+_c(\bbR)$. If there is $B > 0$ such that
$\det(\nu_j) \geq B$ for all $j$, then $\det(\nu) \geq B$ and
\begin{equation*}
   \lim_{j \in J} \nu_j(\{0\}) = \nu(\{0\}).
\end{equation*}
\end{lemma}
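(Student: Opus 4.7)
My plan is to extract from $\det(\nu_j)\geq B$ a uniform estimate on the mass $\nu_j$ places close to $0$, and then combine this uniform control with the Portmanteau theorem; a short truncation argument then gives the determinant inequality itself.

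\textbf{Uniform tail control at $0$.} Weak convergence forces $M_0 := \sup_j\nu_j([0,C])<\infty$, and after enlarging $C$ I may assume $C\geq 1$. For $\epsilon \in (0,1)$ I split the defining integral of $\det(\nu_j)$ at $\epsilon$ and use the bounds $\ln t\leq -|\ln\epsilon|$ on $(0,\epsilon)$ and $\ln t\leq\ln C$ on $[\epsilon,C]$ to obtain
\[
\ln B \;\leq\; -|\ln\epsilon|\,\nu_j\bigl((0,\epsilon)\bigr)+M_0\ln C,
\]
which rearranges to $\nu_j\bigl((0,\epsilon)\bigr)\leq(M_0\ln C-\ln B)/|\ln\epsilon|$. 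The right-hand side vanishes as $\epsilon\to 0$ independently of $j$, giving the desired uniform tail bound.

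\textbf{Convergence of atoms at $0$.} Portmanteau applied to the closed set $\{0\}$ yields $\limsup_j\nu_j(\{0\})\leq\nu(\{0\})$, while Portmanteau applied to the open set $(-\epsilon,\epsilon)$ yields $\liminf_j\nu_j([0,\epsilon))\geq\nu([0,\epsilon))\geq\nu(\{0\})$. Writing $\nu_j(\{0\})=\nu_j([0,\epsilon))-\nu_j((0,\epsilon))$ and absorbing the (uniformly vanishing) second term via Step 1, I obtain $\liminf_j\nu_j(\{0\})\geq\nu(\{0\})-\eta$ for arbitrary $\eta>0$, hence equality in the limit.

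\textbf{The determinant bound.} For each $M>0$, the truncation $\ln^{(M)}(t):=\max(\ln t,-M)$ (with value $-M$ at $t=0$) is continuous on $[0,C]$ and satisfies $\ln^{(M)}\geq\ln t$ on $(0,C]$, so that
\[
\int_{[0,C]}\ln^{(M)}\,d\nu_j \;\geq\; \int_{(0,C]}\ln t\,d\nu_j - M\nu_j(\{0\}) \;\geq\; \ln B - M\nu_j(\{0\}).
\]
Weak convergence on the left (continuity of $\ln^{(M)}$) and Step 2 on the right allow me to pass to the limit and obtain $\int_{(0,C]}\ln^{(M)}\,d\nu\geq\ln B$. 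Since $\ln^{(M)}\searrow\ln t$ on $(0,C]$ as $M\to\infty$, monotone convergence then delivers $\int_{(0,C]}\ln t\,d\nu\geq\ln B$, i.e.\ $\det(\nu)\geq B$. The only delicate point throughout is Step 1: because $\ln t$ is unbounded below and changes sign on $(0,C]$, the bookkeeping of signs and of the divisor $\ln\epsilon$ in the uniform estimate must be done carefully; the rest is routine Portmanteau manipulation.
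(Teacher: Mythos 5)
Your proof is correct, and the first two steps (the uniform tail bound $\nu_j((0,\varepsilon))\leq y/|\ln\varepsilon|$ and the atom-convergence via two applications of Portmanteau) follow the paper's argument closely, though you reverse the logical order: the paper proves $\det(\nu)\geq B$ \emph{first}, using the Portmanteau inequality for upper semi-continuous functions bounded above applied directly to $f(t)=\min(\ln t,\ln C)$ cut off by $0$ on $t\leq 0$, and only then derives the tail bound and atom convergence. Your Step 3 is where the route genuinely diverges: instead of invoking the strengthened (u.s.c.) form of Portmanteau, you build the continuous truncation $\ln^{(M)}=\max(\ln,-M)$, use only the defining property of weak convergence for bounded continuous test functions, and pass $M\to\infty$ by monotone convergence. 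This is a legitimate and slightly more elementary alternative. One remark: you say Step 2 is needed on the right-hand side of the inequality $\int\ln^{(M)}d\nu_j\geq\ln B-M\nu_j(\{0\})$, but in fact only the closed-set Portmanteau bound $\limsup_j\nu_j(\{0\})\leq\nu(\{0\})$ is required there, since it already yields $\int\ln^{(M)}d\nu\geq\ln B-M\liminf_j\nu_j(\{0\})\geq\ln B-M\nu(\{0\})$; so your Step 3, like the paper's determinant bound, is actually independent of the full two-sided atom convergence. A second minor point: for a net, $\sup_j\nu_j([0,C])$ need not literally be finite; one should say the masses are \emph{eventually} bounded (as the paper does), which is all the estimate uses.
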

\begin{proof}
Note that the support of $\nu$ is contained in the interval $[0,C]$ as well.
We fix a real number $V > 0$ such that $\nu_j(\bbR) \leq V$ for all large $j$ (and thus $\nu(\bbR) \leq V$). 
Let $f\colon \bbR\to \bbR$ be the function 
\begin{equation*}
     f(t) = \begin{cases}
              0 \quad& \text{ if } t \leq 0\\
              \ln(t) & \text{ if } 0 < t < C\\
              \ln(C) & \text{ if }t \geq C\\
     \end{cases}.
\end{equation*}
Then $\ln(\det(\nu_j)) = \int_\bbR f \: d\nu_j$ for all $j\in J$ (the same holds for $\nu$).
The function $f$ is upper semi-continuous and bounded from above, thus by the Portmanteau-Theorem \cite[Thm.~1.3.4]{VaartWellner} we obtain
\begin{equation*}
   B \leq \limsup_{j \in J} \det(\nu_j) \leq \det(\nu).
\end{equation*}

For all $1 > \varepsilon > 0$,  we obtain the estimate
\begin{equation*}
   \ln(B) \leq \ln(\det(\nu_j)) \leq \ln(\varepsilon) \nu_j\bigl((0,\varepsilon)\bigr) + \ln(C) V
\end{equation*}
and so $\nu_j\bigl((0,\varepsilon)\bigr) \leq \frac{y}{|\ln(\varepsilon)|}$ for some number $y > 0$.
Two applications of the Portmanteau-Theorem yield
\begin{align*}
\liminf_{j \in J}\nu_j(\{0\}) + \frac{y}{|\ln(\varepsilon)|} &\geq \liminf_{j \in J} \nu_j\bigl((-\varepsilon,\varepsilon)\bigr)
\geq \nu\bigl((-\varepsilon,\varepsilon)\bigr) \\
&= \nu\bigl( [0,\varepsilon) \bigr) \geq \limsup_{j \in J} \nu_j(\{0\}).
\end{align*}
The assertion follows by taking the limit $\varepsilon \to 0$.
\end{proof}

\subsection{Finite dimensional representations}\label{sec:finitedimensional}
Before we continue with the general discussion, we briefly review the introduced notions for the special case of characters
defined via finite dimensional unitary representations. These examples can be treated by means of elementary linear algebra, nevertheless, 
this case plays an important role later on.

Let $G$ be a group and let $\rho\colon G \to \U(V)$ be a  unitary representation on 
a finite dimensional Hilbert space $(V, \langle\cdot,\cdot\rangle)$.
The representation is completely reducible. 
This means, that we can decompose $V$ canonically into finitely many isotypic components
\begin{equation*}
   V = \bigoplus_{i=1}^s W_i
\end{equation*}
where each $W_i$ is isomorphic to $m_i$ copies of a finite dimensional irreducible unitary $G$-module $(\rho_i,V_i)$ and the representations 
$\rho_i$ are pairwise distinct.

We will show that the corresponding tracial Hilbert $G$-bimodule is the direct sum
\begin{equation*}
    \calH_\phi = \bigoplus_{i=1}^s \End_{\bbC}(V_i)
\end{equation*}
with the left (resp.\ right) action of $g\in G$ given by left (resp.\ right) multiplication with $(\rho_i(g))_i$.
More precisely,
the inner product of $a= (a_i)_{i}$ and $b = (b_i)_i$ is defined as $\langle a,b \rangle = \frac{1}{\dim(V)} \sum_{i=1}^s \Tr(b_i^* a_i)$ and the
tracial vector is $e = (\sqrt{m_i}\id_{V_i})_i$.
Indeed, this defines a Hilbert $G$-bimodule. The axioms \eqref{it:bimodule1} and \eqref{it:bimodule2} are evident, so it suffices to verify that the left translates of 
$e$ span a dense subspace.

Let $B$ be the $\bbC$-algebra spanned by $\{(\rho_i(g))_i \:|\: g\in G \}$. We claim that $B = \calH_\phi$.
Suppose this is not the case, then $B^{\perp} = \{\:x\in\calH_\phi\:|\: \langle b,x \rangle = 0 \text{ for all } b\in B\:\}$ is a non-trivial $B$-left submodule of $\calH_\phi$.
Let $W$ be an irreducible summand of $B^{\perp}$. The irreducible $B$-submodules of $\calH_\phi$ are all isomorphic to one of the modules $V_i$.
Since these are pairwise distinct the module $W$ lies in one isotypic component, i.e.\ one of the factors $\End_{\bbC}(V_j)$.
By a result of Burnside (see \cite[1.16]{Wehrfritz}) the projection map $B \to \End_{\bbC}(V_j)$ is surjective, which contradicts the assumption $W \subseteq B^{\perp}$.
We conclude that $\calH_\phi = B e$ as claimed and that $\calH_\phi$ is a tracial Hilbert $G$-bimodule with character
\begin{equation*}
  \langle g\cdot e, e\rangle = \frac{1}{\dim(V)}\sum_{i=1}^s m_i \Tr(\rho_i(g)) = \frac{1}{\dim(V)}\Tr(\rho(g)) = \phi(g).
\end{equation*}
Thus it is the bimodule associated to $\phi$.

Consider the von Neumann trace of a matrix $A = (a_{i,j}) \in \pos_n(\bbC[G])$ with respect to $\phi$. Write $l_A$ to denote the endomorphism defined by $A$ on $V^n$, then 
\begin{equation}\label{eq:vonNeumannTrace1}
   \Tr_{\calN_\phi}(A) = \frac{1}{\dim(V)}\sum_{i=1}^n \Tr(\rho(a_{i,i})) = \frac{1}{\dim(V)} \Tr(l_A).
\end{equation}
We can use this to determine the spectral measure of $A$ with respect to $\phi$ and its Fuglede-Kadison determinant.
Let $\lambda_1\leq \lambda_2 \leq \dots \leq \lambda_{n\dim(V)}$ denote the eigenvalues of $l_A$ (occurring with multiplicities).
\begin{lemma}\label{lem:finitedimFKdet}
 Let $A \in \pos_n(\bbC[G])$ and let $\rho\colon G \to \U(V)$ be a finite dimensional unitary representation with character $\phi$ as above.
 The spectral measure $\mu_{A,\phi}$ is 
 \begin{equation*}
    \mu_{A,\phi} = \frac{1}{\dim(V)} \sum_{i=1}^{n\dim(V)} \delta_{\lambda_i}
 \end{equation*}
 where $\delta_\lambda$ denotes the Dirac measure at $\lambda$.
 The Fuglede-Kadison determinant of $\mu_{A,\phi}$
 is $\det(\mu_{A,\phi}) = c^{1/\dim(V)}$, where $c$ is the
 product of the non-zero eigenvalues. In other words $c$ is the absolute value of the first non-zero coefficient of the characteristic polynomial of $l_A$.
\end{lemma}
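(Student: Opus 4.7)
The plan is to identify the spectral measure through its moments using the trace formula \eqref{eq:vonNeumannTrace1}, and then read off the Fuglede--Kadison determinant by a direct computation. Throughout, write $\lambda_1 \leq \dots \leq \lambda_{n\dim(V)}$ for the eigenvalues of the endomorphism $l_A$ on $V^n$ (repeated with multiplicity); these are non-negative real numbers, since $A \in \pos_n(\bbC[G])$ makes $l_A$ a positive self-adjoint operator on $V^n$.

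First I would verify the moment identities. For every $k \in \bbN$ the matrix $A^k$ is again a matrix over $\bbC[G]$, and the endomorphism it induces on $V^n$ is $(l_A)^k$. Applying \eqref{eq:vonNeumannTrace1} to $A^k$ therefore yields
\begin{equation*}
\int_\bbR t^k \, d\mu_{A,\phi}(t) \;=\; \Tr_{\calN_\phi}(A^k) \;=\; \frac{1}{\dim(V)}\Tr\bigl((l_A)^k\bigr) \;=\; \frac{1}{\dim(V)} \sum_{i=1}^{n\dim(V)} \lambda_i^k,
\end{equation*}
where the left-hand equality is the definition of the spectral measure applied to the polynomial $f(t) = t^k$ and the fact that $l_{A^k,\phi} = (l_{A,\phi})^k$. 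Hence $\mu_{A,\phi}$ and the purely atomic measure $\frac{1}{\dim(V)}\sum_i \delta_{\lambda_i}$ are two finite Radon measures on $\bbR$, both supported in a common compact interval $[0,C]$, having the same moments. By Stone--Weierstra\ss{} they integrate every continuous function to the same value, and the uniqueness part of the Riesz representation theorem then forces the two measures to agree. This establishes the first assertion.

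For the Fuglede--Kadison determinant the computation is immediate from the explicit form of $\mu_{A,\phi}$:
\begin{equation*}
\ln \det(\mu_{A,\phi}) \;=\; \int_{\bbR^\times} \ln|t|\, d\mu_{A,\phi}(t) \;=\; \frac{1}{\dim(V)} \sum_{\lambda_i \neq 0} \ln|\lambda_i| \;=\; \frac{\ln c}{\dim(V)},
\end{equation*}
giving $\det(\mu_{A,\phi}) = c^{1/\dim(V)}$. The identification of $c$ with the absolute value of the first non-zero coefficient of the characteristic polynomial follows because, if $r$ denotes the multiplicity of $0$ as an eigenvalue of $l_A$, then $\det(t\cdot\id - l_A) = t^r \prod_{\lambda_i \neq 0}(t-\lambda_i)$, whose lowest non-vanishing coefficient is $(-1)^{n\dim(V)-r} \prod_{\lambda_i \neq 0} \lambda_i$.

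I do not anticipate a genuine obstacle here; this is really a bookkeeping exercise once \eqref{eq:vonNeumannTrace1} is in hand. The only minor subtlety is the passage from moments to measure, which requires noting that both measures are compactly supported so that the moment problem is determinate (or equivalently that polynomials are dense in $C([0,C])$); this is handled exactly as in the proof of Theorem~\ref{thm:spectralMeasureMap}.
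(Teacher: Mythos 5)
Your proof is correct and follows essentially the same route as the paper: verify the moment identities via formula~\eqref{eq:vonNeumannTrace1}, use compact support and density of polynomials to identify the spectral measure with the normalized sum of Dirac masses, and compute the Fuglede--Kadison determinant directly from that explicit form. You spell out the moment-to-measure uniqueness step and the identification of $c$ with the characteristic polynomial coefficient a bit more explicitly, but these are exactly the details the paper leaves tacit.
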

\begin{proof}
  We use formula \eqref{eq:vonNeumannTrace1} to verify that the measure $\mu$ defined above has the universal property of the spectral measure.
  Indeed, it suffices to check this for monomial functions and
  \begin{equation*}
   \int_{\bbR} t^k \: d\mu(t) = \frac{1}{\dim(V)}\sum_{i} \lambda^k_i = \frac{1}{\dim(V)} \Tr( l_{A^k} ) = \Tr_{\calN_\phi}(A^k).
  \end{equation*}
 The statement about the Fuglede-Kadison determinant follows from a direct calculation of the integral using this measure.
\end{proof}

\section{Sofic and arithmetically hyperlinear characters}\label{sec:soficCharacters}
In this section we study characters which can be approximated by rather elementary characters.
For example, characters which can be approximated by characters of finite permutation representations.
This resembles the very general perspective on approximation taken by Farber in \cite{Farber1998}.
In order to include the sofic approximation theorem due to Elek-Szab\'o \cite{ElekSzabo2005},
it is necessary to allow approximation in the character space of a covering group.
We will first introduce the \emph{sofic characters} and relate them to sofic groups.
Then we generalize this to define \emph{arithmetically hyperlinear} characters.

\subsection{Sofic characters}
A character $\phi \in \Ch(G)$ is called a \emph{permutation character}
if there is 
an action of $G$ on a finite set $X$ such that $\phi(g) = |\fix_X(g)| \cdot|X|^{-1}$.
This is the character associated to the finite dimensional unitary representation of $G$ on the space of complex valued functions on $X$.
Note that the pullback of a permutation character is a permutation character.
The set of permutation characters of $G$ will be denoted by $\Perm(G)$.

\begin{definition}
Let $G$ be a group.
A character $\phi \in \Ch(G)$ is called \emph{sofic}, if there is an epimorphism $H \to G$
such that $\infl_G^H(\phi)$ lies in the closure of the set of permutation
characters of $H$.
\end{definition}
\begin{lemma}\label{lem:soficCharacters}
A character $\phi \in \Ch(G)$ is sofic if and only if for every epimorphism from
a free group onto $G$ the inflated character can be approximated by permutation characters.
Moreover, soficity is preserved under pullbacks along arbitrary homomorphisms.
\end{lemma}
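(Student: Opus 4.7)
The plan is to exploit the universal property of free groups, which gives a lift of any homomorphism through a surjection, combined with two facts already recorded in the text: pullback maps $\alpha^*\colon \Ch(H)\to\Ch(G)$ are continuous, and the pullback of a permutation character is again a permutation character. Together these give both the equivalence and the pullback-preservation at essentially the same cost.

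For the nontrivial ``only if'' direction, suppose $\phi$ is sofic, witnessed by an epimorphism $\alpha\colon H \twoheadrightarrow G$ with $\alpha^*(\phi)\in\overline{\Perm(H)}$, and let $p\colon F\twoheadrightarrow G$ be any epimorphism from a free group $F$. Choosing for each free generator of $F$ an $\alpha$-preimage of its image under $p$ defines a homomorphism $\beta\colon F\to H$ with $\alpha\beta = p$. Then
\begin{equation*}
\infl_G^F(\phi) \;=\; p^*(\phi) \;=\; \beta^*\bigl(\alpha^*(\phi)\bigr),
\end{equation*}
and by continuity of $\beta^*$ together with $\beta^*(\Perm(H))\subseteq \Perm(F)$ we get $\infl_G^F(\phi)\in\overline{\Perm(F)}$, as required. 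The converse is immediate: free groups surject onto any group, so the hypothesis ``for every free epimorphism'' already supplies one epimorphism exhibiting soficity by definition.

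For the ``moreover'' claim, let $f\colon K\to G$ be an arbitrary homomorphism and pick an epimorphism $p\colon F\twoheadrightarrow K$ from a free group $F$. The composition $fp\colon F\to G$ need not be surjective, but by freeness of $F$ it still lifts through $\alpha$ to some $\beta\colon F\to H$ with $\alpha\beta = fp$. The same computation
\begin{equation*}
\infl_K^F\bigl(f^*(\phi)\bigr) \;=\; p^*\bigl(f^*(\phi)\bigr) \;=\; (fp)^*(\phi) \;=\; \beta^*\bigl(\alpha^*(\phi)\bigr)
\end{equation*}
lands in $\overline{\Perm(F)}$, showing that $f^*(\phi)$ is sofic via the epimorphism $p$. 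The only mildly delicate point is to notice that non-surjectivity of $fp$ creates no obstruction: continuity of the pullback and its compatibility with permutation characters hold for arbitrary homomorphisms, not only for epimorphisms, so the lifting argument goes through verbatim.
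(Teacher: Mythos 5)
Your proof is correct, and for the ``moreover'' part it is in fact cleaner than the paper's argument. Both proofs handle the main equivalence the same way: given a witnessing epimorphism $\alpha\colon H\twoheadrightarrow G$, any $p\colon F\twoheadrightarrow G$ lifts through $\alpha$ by freeness, and one composes pullbacks. For pullback-stability, however, the paper factors an arbitrary $f\colon K\to G$ into an epimorphism followed by an inclusion and treats the two cases separately — the inclusion case requires the Nielsen--Schreier theorem to see that the preimage of a subgroup in a free group is again free. You instead notice that the lifting step never used surjectivity of the map being lifted: given any $p\colon F\twoheadrightarrow K$, the composite $fp\colon F\to G$ (surjective or not) lifts through $\alpha$ because $F$ is free, and the pullback of a permutation character along an arbitrary homomorphism is still a permutation character. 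This bypasses both the case distinction and Nielsen--Schreier entirely, at no cost in rigor, since what $\phi$ being sofic requires is only that $p^*(f^*(\phi))$ sits in $\overline{\Perm(F)}$ for the epimorphism $p\colon F\twoheadrightarrow K$, and your factorization $p^*(f^*(\phi))=\beta^*(\alpha^*(\phi))$ delivers exactly that.
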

\begin{proof}
Every group is the quotient of a free group, so the ``if'' statement is clear.
Conversely, assume that $\phi\in\Ch(G)$ is sofic and let $\alpha\colon H \to G$ be an epimorphism
such that $\alpha^*(\phi)$ lies in the closure $\overline{\Perm(H)}$ of the set of permutation characters.
Let $\pi\colon F \to G$ be any epimorphism from a free group $F$ onto $G$.
Since $F$ is free, there is a homomorphism $j\colon F \to H$ such that $\alpha\circ j = \pi$.
It follows from $j^*(\Perm(H)) \subseteq \Perm(F)$ and the continuity of $j^*$, that we have $\overline{\Perm(H)} \subseteq (j^*)^{-1}(\overline{\Perm(F)})$. We deduce that $\pi^*(\phi) = j^*(\alpha^*(\phi))$ lies in $\overline{\Perm(F)}$.

Let $\phi$ be a sofic character of $G$ and let $f\colon K \to G$ be a group homomorphism. We want to show that $f^*(\phi)$ is sofic.
Every homomorphism factors into a mono- and an epimorphism. We discuss these two cases separately.

Suppose $f$ is \emph{surjective} and let $j\colon F \to K$ be an epimorphism from a free group. Then $f\circ j \colon F \to G$ is an epimorphism and so  $j^*(f^*(\phi)) = (f\circ j)^*(\phi) \in \overline{\Perm(F)}$. We deduce that $f^*(\phi)$ is sofic as claimed.

Suppose $f$ is \emph{injective}, i.e.\ $K$ is a subgroup of $G$. Observe that the first part of the lemma shows that the claim holds if $G$ and $K$ are free groups.
The general case can be reduced to this case. Let $j\colon F \to G$ be an epimorphism from a free group. The inverse image $F_2 = j^{-1}(K)$
is again a free group (Nielsen-Schreier). The identity $\res^F_{F_2} \circ j^*(\phi) = j_{|F_2}^* \circ \res^G_K(\phi)$ shows that $\res^G_K(\phi)$ is sofic.
\end{proof}

The name sofic is justified by fact that this concept is closely related to the \emph{sofic groups} introduced by Gromov \cite{Gromov1999} under the name 
\emph{initially subamenable}. We briefly recall the definition given in \cite[Def.~1.2]{ElekSzabo2005} and we refer to \cite{ElekSzabo2006} for further properties.

\begin{definition}\label{def:soficgroup}
A group $G$ is \emph{sofic} if for every $c >0$ and every finite subset $W \subseteq G$,
there exists a finite set $X$ and a map $f \colon G \to \Aut(X)$ (i.g.\ no homomorphism) such that
\begin{enumerate}
\item for all $u,v \in W$ the set 
\begin{equation*}
L_{u,v} = \{\:x\in X\:|\: f(uv)(x) = (f(u)\circ f(v))(x) \:\}
\end{equation*}
has at least $(1-c)|X|$ elements.
\item $f(1) = \id_X$ and for all $u\in W\setminus \{1\}$ the fixed point set $\fix_X(f(u))$ has at most $c|X|$ elements. 
\end{enumerate}
\end{definition}
\begin{proposition}\label{prop:CharacterSofic}
A group $G$ is sofic if and only if its regular character $\delta^{(2)}_G$ is sofic.
\end{proposition}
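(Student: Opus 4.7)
The strategy is to use Lemma \ref{lem:soficCharacters} to reformulate soficity of $\delta^{(2)}_G$ as follows: there exist an epimorphism $\pi\colon F \to G$ from a free group $F$ and a net of permutation characters $\chi_n \in \Perm(F)$ converging pointwise to $\infl_G^F(\delta^{(2)}_G) = \delta^{(2)}_G \circ \pi$. Both implications will then be proved by transferring back and forth between such permutation approximations of $F$ and sofic approximations of $G$ in the sense of Definition \ref{def:soficgroup}, using a set-theoretic section of $\pi$.

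For the ``if'' direction, suppose $\delta^{(2)}_G$ is sofic and fix such a $\pi$ together with permutation actions $\rho_n\colon F \to \Aut(X_n)$ whose characters $\chi_n(w) = |\fix_{X_n}(\rho_n(w))|/|X_n|$ converge to $\delta^{(2)}_G\circ \pi$. Choose a set-theoretic section $s\colon G \to F$ of $\pi$ with $s(1_G) = 1_F$ and define $f_n := \rho_n \circ s\colon G \to \Aut(X_n)$. Given a finite $W \subseteq G$ and $c > 0$, the two requirements of Definition \ref{def:soficgroup} reduce to character-value convergences for finitely many elements of $F$. The fixed-point bound for $u \in W\setminus\{1\}$ follows from $\chi_n(s(u)) \to \delta^{(2)}_G(u) = 0$. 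For $u,v \in W$, the set $L_{u,v}$ on which $f_n(uv)$ and $f_n(u)\circ f_n(v)$ coincide equals the fixed-point set of $\rho_n(s(u)s(v)s(uv)^{-1})$, and because $s(u)s(v)s(uv)^{-1} \in \ker(\pi)$ the value $\chi_n$ takes on this element tends to $1$. Choosing $n$ large enough produces the sofic approximation of $G$.

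For the ``only if'' direction, suppose $G$ is sofic and pick an epimorphism $\pi\colon F \to G$ from a free group $F$ on generators $(x_i)_{i \in I}$. For every pair $(W, c)$, let $f_{W,c}\colon G \to \Aut(X_{W,c})$ be a sofic approximation as in Definition \ref{def:soficgroup}. Using the universal property of $F$, lift $f_{W,c}$ to a \emph{homomorphism} $\widetilde{f}_{W,c}\colon F \to \Aut(X_{W,c})$ by declaring $\widetilde{f}_{W,c}(x_i) = f_{W,c}(\pi(x_i))$. The plan is to show that, along a suitable cofinal family of $(W,c)$, the permutation characters $\widetilde{\chi}_{W,c}$ of these homomorphisms converge to $\delta^{(2)}_G \circ \pi$. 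Fix $w \in F$, written as a reduced word of length $k$ in the generators. An induction on $k$, using the approximate multiplicativity of $f_{W,c}$ together with the derived approximate-inverse property $f_{W,c}(g)\circ f_{W,c}(g^{-1}) \approx \id$ (a consequence of Definition \ref{def:soficgroup} applied to $u=g$, $v=g^{-1}$), shows that $\widetilde{f}_{W,c}(w)$ and $f_{W,c}(\pi(w))$ agree on $X_{W,c}$ outside a set of normalised size $O(k\cdot c)$, provided $W$ contains $\pi$ of all relevant initial subwords of $w$. Combining this with the fixed-point bound from Definition \ref{def:soficgroup} applied to $\pi(w)$ yields $\widetilde{\chi}_{W,c}(w) \to \delta^{(2)}_G(\pi(w))$.

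The principal obstacle lies in this inductive Hamming-distance bookkeeping in the ``only if'' direction: because each $f_{W,c}$ is only approximately a homomorphism, every use of multiplicativity or of the inverse identity contributes an error of order $c$, and the list of group elements for which these estimates are allowed is dictated by the finite set $W$. One must therefore choose $W$ and $c$ carefully in terms of each reduced word $w$ so that the accumulated errors tend to zero, then extract a sequence (or net) that works simultaneously for a prescribed finite subset of $F$. Once this bookkeeping is in place, the identification of characters is immediate and both directions follow.
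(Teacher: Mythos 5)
Your proposal follows essentially the same route as the paper's proof in both directions: in the ``only if'' direction it extends a sofic approximation of $G$ along an epimorphism $\pi\colon F\to G$ to a genuine permutation representation of $F$ by specifying it on free generators and then tracking the accumulated error; in the ``if'' direction it pulls a permutation approximation of $\infl_G^F(\delta^{(2)}_G)$ back to a sofic approximation of $G$ via a set-theoretic section $\sigma$ with $\sigma(1)=1$, identifying $L_{u,v}$ with a fixed-point set and reading off the two conditions of Definition \ref{def:soficgroup} from pointwise convergence of characters. Two small remarks: the set $L_{u,v}$ is literally $\fix_X\bigl(\rho_n(\sigma(uv)^{-1}\sigma(u)\sigma(v))\bigr)$ rather than $\fix_X\bigl(\rho_n(\sigma(u)\sigma(v)\sigma(uv)^{-1})\bigr)$; since the two group elements are conjugate in $F$ this has no bearing on the cardinality or the character value, but it is worth stating correctly. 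And your explicit invocation of the derived approximate-inverse property $f(g)\circ f(g^{-1})\approx\id$ in the ``only if'' bookkeeping is a point the paper glosses over but that is genuinely needed to handle negative exponents in reduced words; being careful there is a feature, not a bug.
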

\begin{proof}
Choose an epimorphism $\pi\colon F \to G$ where $F = F(Y)$ is a free group on a generating set $Y$.
Let $\phi = \pi^*(\delta^{(2)}_G)$ be the pullback of the regular character.

``$\implies$'': Assume that $G$ is sofic.  We will show that $\phi$ lies
in the closure of the permutation characters.

Let $\varepsilon > 0$, $r\in \bbN$  and $S \subseteq Y$ be a finite subset. Let $B(r,S)$ denote the finite set of words in $F$ of length at
 most $r$ using only the generators in $S$. We will show that there is a permutation character of $F$ which approximates $\phi$ on all
 elements in $B(r,S)$ up to $\varepsilon$. So, every neighbourhood of $\phi$ contains a permutation character and this implies that $\delta^{(2)}_G$ is sofic.
 
 Put $W = \pi(B(r,S))$. As $G$ is sofic, there is a finite set $X$ and a map $f\colon G \to \Aut(X)$ as in Definition \ref{def:soficgroup} 
 with $c = \frac{\varepsilon}{r+1}$. We obtain a homomorphism $\alpha\colon F \to \Aut(X)$ which sends $y \in Y$ to $f(\pi(y))$.
 We claim that $\alpha$ defines a suitable permutation representation of $F$.
 
 Indeed, by construction we have for all $w \in B(r,S)$
 \begin{equation*}
  \bigl\vert\{\:x \in X \:\vert\:  \alpha(w)(x) = f(\pi(w))(x) \:\}\bigr\vert \geq (1-\frac{r \varepsilon}{r+1})\vert X\vert
 \end{equation*}
 and further we have 
 \begin{equation*}
   \bigl\vert\fix_X(\alpha(w))\bigr\vert \leq \varepsilon \vert X \vert
 \end{equation*}
 for all $w \in B(r,S)$ with $\pi(w)\neq 1$.
 We conclude 
 $\vert\frac{\vert\fix_X(\alpha(w))\vert}{\vert X \vert} - \phi(w)\vert < \epsilon$ for all $w \in B(r,S)$.
 
 Conversely, assume that $\delta^{(2)}_G$ is sofic. This means, by Lemma \ref{lem:soficCharacters}, that $\phi$ can be approximated
 by permutation characters. Let $c>0$ and let $W \subset G$ be a finite subset. Choose a section $\sigma\colon G\to F$ of $\pi$ with $\sigma(1_G) = 1_F$.
 By our assumption there is a finite set $X$ and a permutation representation $\alpha\colon F \to \Aut(X)$ 
 such that 
 \begin{equation*} 
    \vert\frac{\vert\fix_X(\alpha(\sigma(w)))\vert}{\vert X \vert} - \delta^{(2)}(w)\vert < c
 \end{equation*}
 for all $w \in W$ and
 \begin{equation*}
   1 - \frac{\vert\fix_X(\alpha(\sigma(uv)^{-1}\sigma(u)\sigma(v)))\vert}{\vert X \vert}  < c
 \end{equation*}
 for all $u,v \in W$.
 Observe that $L_{u,v} = \fix_X(\alpha(\sigma(uv)^{-1}\sigma(u)\sigma(v)))$ for
 the map $f\colon G \to \Aut(X)$ defined by $f(g) = \alpha(\sigma(g))$.
 These inequalities imply that $f$ has the properties from Definition \ref{def:soficgroup}.

\end{proof}

Our interest in sofic characters stems from the following result which generalizes the approximation theorems of L\"uck \cite{Luck1994},
Bergeron-Gaboriau \cite{BergeronGaboriau2004} and Elek-Szab\'o \cite{ElekSzabo2005}.
\begin{theorem}\label{thm:soficCharactersApprox}
Let $G$ be a group.
Every convergent net of sofic characters in $\Ch(G)$ has the approximation property (Def.\ \ref{def:approximationProperty}) with respect
to every matrix over the group ring $\bbQ[G]$.
\end{theorem}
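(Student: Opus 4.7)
My plan is to combine Theorem \ref{thm:spectralMeasureMap} with the Fuglede-Kadison determinant bound of Lemma \ref{lem:boundedDeterminant}, in the spirit of L\"uck's original approximation argument. First I would perform two simplifying reductions. By the identity \eqref{eq:dim-formula} it suffices to prove convergence of nullities, and since $\ker(l_A) = \ker(l_{A^*A})$ as subspaces of $\calH_\phi^n$, nullity is unchanged upon replacing $A$ by $A^*A$; after clearing denominators I may therefore assume $A \in \pos_n(\bbZ[G])$. Second, fixing an epimorphism $\pi\colon F \to G$ from a free group $F$, Corollary \ref{cor:PropertiesOfRank}(\ref{item:transformation}) shows that nullities are preserved upon replacing the data by $(F, \pi(A), \pi^*(\phi_j), \pi^*(\phi))$, and by Lemma \ref{lem:soficCharacters} each $\pi^*(\phi_j)$ lies in $\overline{\Perm(F)}$. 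So the problem reduces to a net $(\psi_j)_{j \in J}$ in $\overline{\Perm(F)}$ converging pointwise to some $\psi \in \Ch(F)$, together with a matrix $A \in \pos_n(\bbZ[F])$.

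The crux is then a uniform lower bound
\[
\det(\mu_{A,\eta}) \geq 1 \qquad \text{for every } \eta \in \overline{\Perm(F)}.
\]
I would first verify this for $\eta \in \Perm(F)$ itself. In that case $\calH_\eta$ is the finite dimensional bimodule associated to a permutation representation $V = \bbC[X]$, as analysed in Section \ref{sec:finitedimensional}. Since $F$ acts on $V^n$ by permuting a standard basis and $A$ has integer coefficients, the operator $l_A$ on $V^n$ is represented by an integer matrix; its characteristic polynomial therefore lies in $\bbZ[t]$, so the absolute value of its first non-zero coefficient is at least $1$, and Lemma \ref{lem:finitedimFKdet} gives $\det(\mu_{A,\eta}) \geq 1$. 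To push the bound to the closure, I would approximate any $\eta \in \overline{\Perm(F)}$ by a net of permutation characters, apply Theorem \ref{thm:spectralMeasureMap} to obtain weak convergence of the associated spectral measures (all supported in a fixed compact interval, by Lemma \ref{lem:universalbound}), and invoke the first assertion of Lemma \ref{lem:boundedDeterminant}.

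With the uniform determinant bound in hand the theorem follows immediately. Theorem \ref{thm:spectralMeasureMap} yields weak convergence $\mu_{A,\psi_j} \stackrel{w}{\to} \mu_{A,\psi}$ with all determinants bounded below by $1$, so the second assertion of Lemma \ref{lem:boundedDeterminant} produces
\[
\lim_{j \in J} \nulli_{\psi_j}(A) = \lim_{j \in J} \mu_{A,\psi_j}(\{0\}) = \mu_{A,\psi}(\{0\}) = \nulli_\psi(A),
\]
which is the desired conclusion. The one substantive point is the integrality step, an instance of the classical ``L\"uck Lemma'': evaluating an integer matrix over $\bbZ[F]$ in a permutation representation produces an integer matrix, and hence the product of its non-zero eigenvalues is a non-zero integer. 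Everything else is the continuity machinery developed earlier in the section.
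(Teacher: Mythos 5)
Your proposal is correct and follows essentially the same path as the paper's proof: reduce to a free group via pullbacks, pass to a positive integer matrix, bound the Fuglede--Kadison determinant below by $1$ on permutation characters, and propagate the bound to the closure by upper semi-continuity of the determinant before invoking Lemma~\ref{lem:boundedDeterminant}. The only cosmetic difference is that you verify the determinant bound directly (integer matrix, hence integer characteristic polynomial) rather than deferring to the more general Lemma~\ref{lem:lucklemma} as the paper does, but your argument is exactly the $d=1$ special case of that lemma.
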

\begin{proof}
Since soficity and ranks are preserved by pullbacks (see Corollary \ref{cor:PropertiesOfRank} and Lemma \ref{lem:soficCharacters}) we can assume that $G$ is a free group.
Let $A$ be a matrix over the group ring $\bbQ[G]$.
We can assume, by clearing denominators, that $A$ lies in the integral group ring $\bbZ[G]$. Theorem \ref{thm:spectralMeasureMap} implies that a convergent net of characters
yields a weakly  convergent net of spectral measures. By Lemma \ref{lem:boundedDeterminant} 
it is sufficient to show that the Fuglede-Kadison determinant of $A^*A$ is bounded below on sofic characters.
The key observation is the following (often called the \emph{L\"uck Lemma}):
in every permutation representation the matrix $A^*A$ has non-zero integral Fuglede-Kadison determinant.
This is not difficult to verify. We postpone the
proof to the more general Lemma \ref{lem:lucklemma} below.
We use it to conclude that on permutation characters the Fuglede-Kadison determinant of $A^*A$ is bounded below by $1$. Since the Fuglede-Kadison determinant is upper semi-continuous
(Lemma \ref{lem:boundedDeterminant}) this bound is valid for all sofic characters.
\end{proof}

\subsection{Arithmetically hyperlinear characters}
We will now proceed to establish the approximation property for a more general class of characters. The procedure is the same as above, however,
we replace the set of permutation characters with the larger set of characters from representations with finite image.
To this end it is necessary to introduce some terminology.

\begin{definition}
 Let $\rho\colon G \to \GL(V)$ be a representation of $G$ on a finite dimensional $\bbC$-vector space.
 We say that $\rho$ is \emph{defined over an algebraic number field $F$}, if there is 
 a finite dimensional $F$-vector space $V_F$, a representation $\rho_F\colon G \to \GL(V_F)$ and an embedding $\iota\colon F \to \bbC$ 
 such that the representations $(\bbC \otimes_\iota V_F, \rho_F)$ and  $(V, \rho)$  are isomorphic.
 We will call the triple $(V_F,\rho_F,\iota)$ a \emph{model} of $\rho$ over $F$.
\end{definition}

\begin{definition}
Let $G$ be a group.
A  \emph{finite representation} of $G$ is a representation $\rho\colon G \to \GL(V)$ \emph{with finite image} on a finite dimensional complex vector space V.
\end{definition}

Note that every representation of a finite group $K$ is already defined over the cyclotomic field generated by the $|K|$-th roots of unity.
In particular, every finite representation is defined over some algebraic number field.
\begin{definition}
 Let $\rho$ be a finite representation of $G$.
 The \emph{arithmetic degree} of $\rho$ is the minimal degree of a number field over which $\rho$ is defined, i.e.\
\begin{equation*}
   \adeg(\rho) = \min\bigl\{\:[F:\bbQ] \:|\: \rho \text{ is defined over } F \:\bigr\}.
\end{equation*}
\end{definition}


Let $(V,\rho)$ be a finite representation of $G$. 
We define the character $\phi$ attached to $\rho$ by
\begin{equation}\label{eq:attachedCharacter}
    \phi(g) = \frac{1}{\dim(V)} \: \Tr( \rho(g) ).
\end{equation}
Since $\rho(G)$ is finite, we can find a $G$-invariant hermitian form on $V$ by averaging.
Hence $\phi$ is the character of a finite dimensional unitary representation, however the concrete choice of a hermitian form on $V$ is irrelevant.
The characters of finite representations will be called the \emph{finite characters}. The number
$\adeg(\phi) := \adeg(\rho)$ is called the arithmetic degree of $\phi$. 
We denote the set of finite characters (of arithmetic degree at most $d$) of $G$ by
$\aCh(G)$ (respectively by $\aCh^d(G)$).

\begin{definition}
  Let $G$ be a group. A character $\phi \in \Ch(G)$ is called \emph{arithmetically hyperlinear} (of degree at most $d$), if there is an epimorphism $H \to G$
such that $\infl_G^H(\phi)$ lies in the closure of $\aCh(H)$ (respectively of $\aCh^d(H)$).
\end{definition}

As above it seems natural to call a group ``arithmetically hyperlinear'' if its regular character is. 
We leave it to the reader to find a non-character description of this class of groups.
The proof of Lemma \ref{lem:soficCharacters} generalizes to the case of arithmetically hyperlinear characters.

\begin{lemma}
A character $\phi \in \Ch(G)$ is arithmetically hyperlinear (of degree at most $d$) if and only if for every epimorphism from
a free group onto $G$ the inflated character can be approximated by finite characters (of arithmetic degree at most $d$).
This property is preserved under pullbacks.
\end{lemma}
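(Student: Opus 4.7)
The plan is to mimic the proof of Lemma \ref{lem:soficCharacters} verbatim, with the set $\Perm(\cdot)$ replaced by $\aCh(\cdot)$ (or $\aCh^d(\cdot)$), once we verify that pullback of characters along a group homomorphism $j\colon F\to H$ maps $\aCh^d(H)$ into $\aCh^d(F)$. This is the only genuinely new point beyond the sofic case: if $\rho\colon H\to \GL(V)$ is a finite representation with model $(V_{F_0},\rho_{F_0},\iota)$ over a number field $F_0$ with $[F_0:\bbQ]\leq d$, then $\rho\circ j\colon F \to \GL(V)$ has image contained in the finite group $\rho(H)$, admits the model $(V_{F_0},\rho_{F_0}\circ j,\iota)$ over the same field $F_0$, and the character attached to $\rho\circ j$ via formula \eqref{eq:attachedCharacter} is exactly $j^*$ of the character attached to $\rho$. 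Hence $\adeg(\rho\circ j)\leq \adeg(\rho)\leq d$ and $j^*(\aCh^d(H))\subseteq \aCh^d(F)$.

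Given this inclusion, the ``if'' direction is immediate since every group is a quotient of a free group. For the ``only if'' direction, assume there is an epimorphism $\alpha\colon H\to G$ with $\alpha^*(\phi)\in\overline{\aCh^d(H)}$, and let $\pi\colon F\to G$ be any epimorphism from a free group. By freeness of $F$ there is a lift $j\colon F\to H$ with $\alpha\circ j=\pi$. From $j^*(\aCh^d(H))\subseteq \aCh^d(F)$ and the continuity of $j^*$ we get $\overline{\aCh^d(H)}\subseteq (j^*)^{-1}(\overline{\aCh^d(F)})$, so
\begin{equation*}
 \pi^*(\phi) = j^*(\alpha^*(\phi)) \in \overline{\aCh^d(F)},
\end{equation*}
which is the desired approximation.

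For stability under pullbacks, factor an arbitrary homomorphism $f\colon K\to G$ as an epi followed by a mono and treat the two cases separately, just as in Lemma \ref{lem:soficCharacters}. If $f$ is \emph{surjective}, take an epimorphism $j\colon F\to K$ from a free group; then $f\circ j\colon F\to G$ is an epimorphism from a free group, and the first half of the lemma gives $j^*(f^*(\phi))=(f\circ j)^*(\phi)\in \overline{\aCh^d(F)}$, so $f^*(\phi)$ is arithmetically hyperlinear of degree at most $d$. If $f$ is \emph{injective}, i.e.\ $K\leq G$, take $j\colon F\to G$ from a free group; by Nielsen-Schreier the subgroup $F_2=j^{-1}(K)$ is free and $j|_{F_2}\colon F_2\to K$ is surjective. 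Applying the inclusion $\res^F_{F_2}(\aCh^d(F))\subseteq \aCh^d(F_2)$ (another instance of the pullback observation above) together with continuity of restriction, the identity $\res^F_{F_2}\circ j^*(\phi)=(j|_{F_2})^*\circ \res^G_K(\phi)$ shows that $\res^G_K(\phi)$ lies in the closure of $\aCh^d(F_2)$, hence is arithmetically hyperlinear of degree at most $d$.

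The main (minor) obstacle is really just bookkeeping the arithmetic degree bound throughout; once one observes that composing a representation with a homomorphism cannot introduce new matrix entries, so any chosen model remains a model after pullback, everything reduces formally to the sofic proof.
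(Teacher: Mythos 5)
Your proposal is correct and follows exactly the route the paper intends: the paper simply asserts that ``the proof of Lemma \ref{lem:soficCharacters} generalizes'' without writing it out, and you have filled in the one genuinely new point — that pulling back a finite representation along a group homomorphism keeps the image finite and preserves any model over a number field, so $j^*(\aCh^d(H))\subseteq\aCh^d(F)$ — after which the argument is word-for-word the sofic one.
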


The main theorem of this section is the following result, which partly generalizes Theorem 9.2 of Farber \cite{Farber1998}.
\begin{theorem}\label{thm:approxArith}
 Let $G$ be a group and $d\in\bbN$.
 Every convergent net of arithmetically hyperlinear characters in $\Ch(G)$ of degree at most $d$ has the approximation property with respect
 to every matrix over the group ring $\bbQ[G]$.
\end{theorem}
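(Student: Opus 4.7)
The plan is to follow the same architecture as the proof of Theorem \ref{thm:soficCharactersApprox}, replacing the integral L\"uck Lemma (valid for permutation characters) with an ``arithmetic'' analog that handles finite characters of bounded arithmetic degree. As before, soficity-type properties and ranks are preserved under pullbacks along an epimorphism from a free group (Corollary \ref{cor:PropertiesOfRank} and the lemma preceding this theorem), so I would reduce immediately to the case where $G$ is a free group and $A \in M_n(\bbZ[G])$ (clearing denominators). Then Theorem \ref{thm:spectralMeasureMap} yields weak convergence of the spectral measures of $A^*A$, and by Lemma \ref{lem:boundedDeterminant} it suffices to bound $\det(\mu_{A^*A,\phi})$ uniformly from below for all $\phi \in \overline{\aCh^d(G)}$. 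By the upper semi-continuity of $\det$ in Lemma \ref{lem:boundedDeterminant}, this in turn reduces to a uniform lower bound valid on $\aCh^d(G)$ itself.

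The heart of the proof is therefore the following arithmetic L\"uck Lemma: there exists a constant $B(A,d) > 0$ such that $\det(\mu_{A^*A,\phi}) \geq B(A,d)$ for every $\phi \in \aCh^d(G)$. To prove it, let $\rho\colon G \to \GL(V)$ be a finite representation with character $\phi$, defined over a number field $F$ with $[F:\bbQ] \leq d$ via a model $(V_F,\rho_F,\iota)$. Since $\rho(G)$ is finite, by averaging an arbitrary $\calO_F$-lattice one may arrange $\rho_F(G) \subseteq \GL_m(\calO_F)$ where $m = \dim V$. Then the matrix of $l_{A^*A}$ on $V_F^n$ lies in $M_{nm}(\calO_F)$, so the first non-zero coefficient $\alpha$ of its characteristic polynomial is a non-zero element of $\calO_F$. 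By Lemma \ref{lem:finitedimFKdet}, $\det(\mu_{A^*A,\phi}) = |\iota(\alpha)|^{1/m}$.

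Now I apply Galois theory: the norm $N_{F/\bbQ}(\alpha) = \prod_\sigma \sigma(\alpha)$, taken over the embeddings $\sigma\colon F \hookrightarrow \bbC$, is a non-zero rational integer, hence $\prod_\sigma |\sigma(\alpha)| \geq 1$. For each embedding $\sigma$, the conjugate representation $(\bbC \otimes_\sigma V_F, \rho_F)$ can be unitarized (as $\rho(G)$ is finite), and its character $\phi^\sigma$ satisfies $\|l_{A^*A}\|_{\phi^\sigma} \leq \|A^*A\|_\infty \leq \|A\|_\infty^2$. The quantity $|\sigma(\alpha)|$ is the absolute value of the product of the non-zero eigenvalues of $l_{A^*A}$ acting via $\rho_F^\sigma$, of which there are at most $nm$, each bounded by $\|A\|_\infty^2$. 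Hence $|\sigma(\alpha)| \leq \|A\|_\infty^{2nm}$ for every $\sigma$, and isolating the factor at the distinguished embedding $\iota$ gives
\begin{equation*}
  |\iota(\alpha)| \geq \prod_{\sigma \neq \iota} |\sigma(\alpha)|^{-1} \geq \|A\|_\infty^{-2nm(d-1)}.
\end{equation*}
Taking $m$-th roots yields $\det(\mu_{A^*A,\phi}) \geq \|A\|_\infty^{-2n(d-1)}$, a bound depending only on $A$ and $d$, independent of the dimension $m$.

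The main obstacle is precisely this arithmetic L\"uck Lemma: one needs the bound to be uniform in $m = \dim V$, which is not controlled by $d$. The crucial cancellation is that the exponent $m$ in Lemma \ref{lem:finitedimFKdet}'s formula $c^{1/m}$ matches the number of eigenvalues per conjugate representation, so the $m$-dependence in the estimate of $|\sigma(\alpha)|$ cancels after taking $m$-th roots. With this bound in hand, Lemma \ref{lem:boundedDeterminant} furnishes $\lim_{j} \mu_{A^*A,\phi_j}(\{0\}) = \mu_{A^*A,\phi}(\{0\})$, which by the dimension formula \eqref{eq:dim-formula} and the identity $\nulli_\phi(A) = \nulli_\phi(A^*A)$ is the required approximation property.
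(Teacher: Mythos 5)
Your proposal is correct and follows essentially the same route as the paper: the same reduction to a free group and integral matrices, the same invocation of Lemma \ref{lem:boundedDeterminant} and its upper semi-continuity, and the same arithmetic L\"uck Lemma based on finding a $G$-stable $\calO_F$-lattice, observing that the leading non-zero coefficient of the characteristic polynomial is an algebraic integer with $\lvert N_{F/\bbQ}\rvert \geq 1$, and bounding the Galois conjugates by the operator norm. Your explicit remark about the cancellation of $m = \dim V$ after extracting $m$-th roots is exactly what makes the paper's Lemma \ref{lem:lucklemma} give a bound depending only on $A$, $n$ and $d$.
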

\begin{proof}
 As in Theorem \ref{thm:soficCharactersApprox} above we can assume that $G$ is a free group and it suffices to consider matrices $A$ over the integral group ring $\bbZ[G]$.
 By Lemma \ref{lem:boundedDeterminant} it is sufficient to find a lower bound for the Fuglede-Kadison determinant $\det(\mu_{A^*A,\phi})$ for all arithmetically hyperlinear characters $\phi$ of degree at most $d$
 which is independent of $\phi$.
 Since $G$ is free, every such character can be approximated by finite characters of arithmetic degree at most $d$. Hence, the upper semi-continuity of
 the Fuglede-Kadision determinant (Lemma \ref{lem:boundedDeterminant}) shows that it is sufficient to consider finite characters. The next lemma completes the proof.
\end{proof}
\begin{lemma}[L\"uck Lemma for finite characters]\label{lem:lucklemma}
 Let $A \in \pos_{n}(\bbZ[G])$ be a non-zero matrix over the integral group ring and
 let $\phi \in \Ch_f(G)$ be a finite character. 
 The Fuglede-Kadison determinant satisfies
 \begin{equation*}
   \det(\mu_{A,\phi}) \geq  \norm{A}^{-(d-1)n}_{\infty}
 \end{equation*}
  where $d = \adeg(\phi)$.
\end{lemma}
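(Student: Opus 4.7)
The plan is to combine Lemma \ref{lem:finitedimFKdet} with a restriction-of-scalars argument over the number field $F$. Fix a model $(V_F,\rho_F,\iota)$ of the finite representation attached to $\phi$ with $[F:\bbQ] = d$, and set $N = \dim_F V_F$. Let $\sigma_1 = \iota, \sigma_2, \ldots, \sigma_d \colon F \to \bbC$ be the $d$ embeddings; for each $\sigma$ equip $V_\sigma := V_F \otimes_{F,\sigma} \bbC$ with the Galois conjugate representation $\rho^\sigma$. Each $\rho^\sigma$ still has finite image, so its character $\phi^\sigma$ lies in $\Ch(G)$. Lemma \ref{lem:finitedimFKdet} then gives
\begin{equation*}
\det(\mu_{A,\phi^\sigma}) = |c_\sigma|^{1/N},
\end{equation*}
where $c_\sigma$ is the first non-zero coefficient of the characteristic polynomial $p_\sigma(t)$ of $l_A$ acting on $V_\sigma^n$, equivalently the product in absolute value of its non-zero eigenvalues. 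The goal is thus to establish $|c_\iota| \geq \norm{A}_\infty^{-(d-1)Nn}$.

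The key step is an integrality claim: $\prod_{\sigma} c_\sigma$ is, up to sign, a non-zero rational integer. Since $\rho_F(G)$ is finite, averaging any $\bbZ$-lattice in $V_F$ (viewed as a $\bbQ$-vector space of dimension $Nd$) over $\rho_F(G)$ yields a $\rho_F(G)$-stable $\bbZ$-lattice $L \subseteq V_F$. Because $A$ has entries in $\bbZ[G]$, the endomorphism $l_A$ preserves $L^n$ and acts $\bbZ$-linearly, so its characteristic polynomial $P(t)$ belongs to $\bbZ[t]$. Under $-\otimes_{\bbQ}\bbC$ the $\bbQ$-space $V_F$ decomposes canonically as $\bigoplus_\sigma V_\sigma$, and this decomposition is $l_A$-equivariant, whence $P(t) = \prod_\sigma p_\sigma(t)$. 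Writing $p_\sigma(t) = t^{k_\sigma}q_\sigma(t)$ with $q_\sigma(0) = \pm c_\sigma \neq 0$, the first non-zero coefficient of $P$ is $\pm\prod_\sigma c_\sigma$, a non-zero integer, so $\prod_\sigma |c_\sigma| \geq 1$.

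To conclude, I bound the $d-1$ Galois conjugate factors from above. Since $A \in \pos_n(\bbZ[G])$, the operator $l_A$ is positive on each $\calH_{\phi^\sigma}^n = V_\sigma^n$, so all of its eigenvalues are non-negative reals bounded by $\norm{l_A}_{\phi^\sigma} \leq \norm{A}_\infty$. The total number of eigenvalues on $V_\sigma^n$ is $Nn$, giving $|c_\sigma| \leq \norm{A}_\infty^{Nn}$ for each $\sigma$. Combining with the integrality bound,
\begin{equation*}
|c_\iota| \geq \prod_{\sigma \neq \iota} |c_\sigma|^{-1} \geq \norm{A}_\infty^{-(d-1)Nn},
\end{equation*}
and taking $N$-th roots proves the lemma. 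I expect the integrality step to be the only genuine obstacle: it rests on producing the $\rho_F(G)$-stable $\bbZ$-lattice and correctly identifying $P(t) = \prod_\sigma p_\sigma(t)$ under restriction of scalars; the remaining eigenvalue estimate is elementary once that multiplicative norm-type lower bound is in hand.
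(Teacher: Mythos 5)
Your proof is correct and follows essentially the same strategy as the paper: fix a model of the representation over a number field $F$ of degree $d$, use a $\rho_F(G)$-stable lattice to get an integral characteristic polynomial, express its first non-zero coefficient as the product over the $d$ embeddings of the ``local'' first non-zero coefficients, and bound the $d-1$ conjugate factors by $\norm{A}_\infty^{Nn}$ via the eigenvalue estimate. The only cosmetic difference is that the paper works with an $\calO_F$-lattice and cites $N_{F/\bbQ}(c)\in\bbZ$ for $c\in\calO_F$, whereas you restrict scalars to a $\bbZ$-lattice and read off the same integrality from the factorization $P=\prod_\sigma p_\sigma$; these amount to the same thing.
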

\begin{proof}
 Let $\rho\colon G \to \GL(V)$ be a finite representation with character $\phi$.
 Put $m = \dim_\bbC V$.
 Let $(V_F,\rho_F,\iota)$ be a model of $\rho$ over an algebraic number field $F$ of degree $d=[F:\bbQ]$.
 The ring of algebraic integers in $F$ is denoted by $\calO_F$.
 The image $\rho_F(G)$ is finite and hence (by averaging over the elements of $\rho_F(G)$) we can find a $G$-stable $\calO_F$-lattice\footnote{An $\calO_F$-lattice is a f.g.\ projective $\calO_F$-submodule of $V_F$ which spans $V_F$ over $F$.} 
 $\Lambda \subseteq V_F$.
 
 Let $l_A$ denote the left multiplication of $A$ on $\Lambda^n$. Given an embedding $v$ of $F$ into $\bbC$ we will denote the 
 left multiplication by $A$ on $\bbC \otimes_{v} V_F^n$ by $l^v_A$.
 
 We will use Lemma \ref{lem:finitedimFKdet} to compute the Fuglede-Kadison determinant.
 Let $c$ be the first non-zero coefficient of the characteristic polynomial of $l_A$. Since $\Lambda$ is a f.g.\ projective $\calO_F$-module we have $c \in \calO_F$.
 Observe that if $v$ denotes an embedding of $F$ into $\bbC$, then the
 corresponding action of $G$ on $V_v = \bbC \otimes_v \Lambda$ is an $m$-dimensional representation. Once again, since the action of $G$ factors over a finite quotient
 we can choose a $G$-invariant hermitian form on $V_v$; this means, the representation is an $m$-dimensional unitary representation.

 By Lemma \ref{lem:finitedimFKdet} the Fuglede-Kadison determinant of $l^v_A$ is
 $|v(c)|^{1/m}$. Since $c$ is a non-zero algebraic integer, the norm $N_{F/\bbQ}(c) = \prod_v v(c)$ is a non-zero integer.
 We conclude that
 \begin{equation*}
    \prod_{v} |v(c)| \geq 1
 \end{equation*}
 where the product runs over all $d$ embeddings\footnote{Here we really mean embeddings and not places in the sense of number theory. This means, an embedding and its complex conjugate
 occur both in the product if they are distinct.} of $F$ into $\bbC$.
 Moreover, $|v(c)|$ is a product of eigenvalues of $l^v_A$. The eigenvalues are bounded above by $\norm{A}_\infty$ and there are at most $nm$ non-zero eigenvalues, hence we obtain the estimate
 \begin{equation*}
     \det(\mu_{A,\phi})^m =  |\iota(c)| \geq \norm{A}_\infty^{-(d-1)nm}.
 \end{equation*}
\end{proof}

\section{Induction of characters from finite groups}\label{sec:Induction}

Let $G$ be a group and let $H$ be a finite group. In this section we will explain how to induce characters from $H$ to $G$.
In particular, we want this induction to agree with the usual induction of characters on finite groups. We begin with a short account of generalized induction 
and then we discuss the case of finite groups to motivate the definitions.

\subsection{Generalized induction using bimodules}
Fix a commutative ring $R$. We briefly remind the reader of generalized induction and restriction using bimodules.
Let $G$ and $H$ be groups and let $M$ be a $R[G]$-$R[H]$-bimodule.
The bimodule $M$ can be used to define two functors. First,
\emph{induction} of right $R[H]$- to right $R[G]$-modules by taking $R[H]$-homomorphisms $M$:
\begin{equation*}
    \Ind_M(V) := \Hom_{R[H]}(M, V)
\end{equation*}
for every right $R[H]$-module $V$.
Where the right $R[G]$-module structure arises from the $G$-action $(f\cdot g)(x) = f(gx)$.
Second, \emph{restriction} of right $R[G]$-modules to right $R[H]$-modules by tensoring:
\begin{equation*}
   \Res_M(W) := W \otimes_{R[G]} M
\end{equation*}
for every $R[G]$-module $W$. 

\begin{example}\label{ex:generalizedRestriction}
 Suppose $H$ is a subgroup of $G$. If $M = R[G]$, then $\Ind_M$ is the usual (co-)induction from $H$ to $G$ and $\Res_M$ is the usual restriction from $H$ to $G$.
 
 Here is another example: take a subgroup $G_0 \leq G$ such that $H \subseteq N_G(G_0)$ and consider $X = G/G_0$, which admits a left action by $G$ and a right action by $H$.
 In this case $M = R[X]$ is a $R[G]$-$R[H]$-bimodule and $\Res_M(W) \cong W_{G_0}$ is the module of $G_0$-coinvariants of $W$ with the action restricted to $H$.
\end{example}

The tensor-hom adjunction yields the following version of Frobenius reciprocity.
\begin{lemma}\label{lem:FrobeniusReciprocity}
   The functors $\Ind_M$ and $\Res_M$ are adjoint, i.e.\ there is a natural isomorphism of $R$-modules
   \begin{equation*}
      \Hom_{R[H]}(\Res_M(W), V ) \cong \Hom_{R[G]}(W, \Ind_M(V)).
   \end{equation*}
   for every $R[H]$-module $V$ and $R[G]$-module $W$.
\end{lemma}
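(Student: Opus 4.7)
The plan is to prove this as a special case of the standard tensor-hom adjunction, being careful to track which actions define the module structures on each side.

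First I would construct the adjunction map explicitly. Given $\varphi \in \Hom_{R[H]}(\Res_M(W), V) = \Hom_{R[H]}(W \otimes_{R[G]} M, V)$, define
\[
\Phi(\varphi)\colon W \longrightarrow \Hom_{R[H]}(M,V), \qquad \Phi(\varphi)(w)(m) = \varphi(w \otimes m).
\]
Conversely, given $\psi \in \Hom_{R[G]}(W, \Ind_M(V)) = \Hom_{R[G]}(W, \Hom_{R[H]}(M,V))$, the universal property of the tensor product gives a well-defined map $\Psi(\psi)\colon W \otimes_{R[G]} M \to V$ with $\Psi(\psi)(w \otimes m) = \psi(w)(m)$, provided the assignment $(w,m) \mapsto \psi(w)(m)$ is $R$-bilinear and $R[G]$-balanced; the latter follows from $R[G]$-linearity of $\psi$ combined with the defining formula $(f\cdot g)(m) = f(gm)$ for the right $R[G]$-structure on $\Ind_M(V)$.

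Next I would verify the module-theoretic compatibilities: (i) for each $w \in W$, $\Phi(\varphi)(w)$ is $R[H]$-linear because $\varphi$ respects the right $H$-action on $M$ (which descends to the right $H$-action on $W \otimes_{R[G]} M$); (ii) $\Phi(\varphi)$ itself is $R[G]$-linear because for $g \in G$,
\[
\Phi(\varphi)(wg)(m) = \varphi(wg \otimes m) = \varphi(w \otimes gm) = \Phi(\varphi)(w)(gm) = \bigl(\Phi(\varphi)(w) \cdot g\bigr)(m),
\]
using the balancing of $\otimes_{R[G]}$ in the middle step. The key observation is that the right $G$-action defined on $\Ind_M(V)$ via the left $G$-action on $M$ is precisely what makes this computation go through.

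Then I would check that $\Phi$ and $\Psi$ are mutually inverse, which reduces to the identity $\Phi(\varphi)(w)(m) = \varphi(w \otimes m)$ and its counterpart for $\Psi$; both are immediate from the definitions. Finally, naturality in $V$ and $W$ is a routine diagram chase: for a morphism $V \to V'$ of right $R[H]$-modules (resp.\ $W' \to W$ of right $R[G]$-modules), both sides of the isomorphism are obtained by post- (resp.\ pre-)composition, and these operations commute with the formulas defining $\Phi$ and $\Psi$.

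There is no real obstacle here; the statement is a direct instance of the tensor-hom adjunction specialized to the situation where the two rings are group rings and the bimodule carries compatible $G$- and $H$-actions. The only point that deserves explicit mention is the verification that the prescribed right $R[G]$-action on $\Hom_{R[H]}(M,V)$ (coming from the \emph{left} action on $M$) is exactly what is needed to make the bijection $R[G]$-equivariant, as this is where the seemingly asymmetric definition of $\Ind_M$ is paid back.
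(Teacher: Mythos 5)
Your proof is correct and is precisely the explicit version of the standard tensor-hom adjunction (including the careful check that the right $R[G]$-action $(f\cdot g)(m) = f(gm)$ on $\Ind_M(V)$ is what makes $\Phi(\varphi)$ equivariant); the paper handles this point by simply citing Bourbaki, \emph{Algebra}, II~\S4, so you have just written out the argument the reference contains.
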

\begin{proof}
 See  \cite[II.\textsection4]{BourbakiAlgebra1}.
\end{proof}

\subsection{Finite groups}
In this section $G$ and $H$ denote finite groups.
\begin{remark}
 The irreducible characters of $G$ in the sense of the representation theory of finite groups will be denoted by $\Irr(G)$.
It is important to note the following: if $\chi \in \Irr(G)$, then $\chi(1)$ is the dimension of the underlying representation, which is in general larger than one.
In this case $\chi$ is not a character in the sense of Definition~\ref{def:characters}. 
However, the normalized class function $\widetilde{\chi}(g) = \frac{\chi(g)}{\chi(1)}$ defines a character
in the sense of Definition~\ref{def:characters}. Indeed, $\Ch(G)$ is the convex hull of the characters $\{\widetilde{\chi}\: | \: \chi \in \Irr(G)\}$.
For emphasis the characters of $G$ in the sense of representation theory will sometimes be called \emph{ordinary characters}.
\end{remark}

\begin{lemma}\label{lem:ArithmeticDegreeFiniteGroup}
 The character space of a finite group $G$ is the closure of the finite characters of arithmetic degree at most $d=\varphi(|G|)$, i.e.\ $\overline{\aCh^d(G)} = \Ch(G)$ .
\end{lemma}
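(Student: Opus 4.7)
The inclusion $\overline{\aCh^d(G)} \subseteq \Ch(G)$ is automatic, since $\Ch(G)$ is closed in the space of class functions on $G$. For the reverse inclusion I plan to exhibit, for every $\phi \in \Ch(G)$, a sequence of characters of actual finite-dimensional complex representations of $G$ of arithmetic degree at most $d$ which converges pointwise to $\phi$. By the remark preceding the lemma, $\Ch(G)$ is the convex hull of the normalized irreducible characters $\widetilde{\chi}$, $\chi \in \Irr(G)$; hence I may write $\phi = \sum_{\chi\in\Irr(G)} \lambda_\chi \widetilde{\chi}$ with $\lambda_\chi \geq 0$ and $\sum_\chi \lambda_\chi = 1$.

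The key input I would invoke is the classical splitting-field theorem of Brauer: every complex irreducible representation $\rho_\chi \colon G \to \GL(V_\chi)$ of the finite group $G$ admits a model over the cyclotomic field $F = \bbQ(\zeta_{|G|})$, which has degree $\varphi(|G|) = d$ over $\bbQ$. Consequently, for any tuple $(m_\chi)_{\chi\in\Irr(G)}$ of non-negative integers, not all zero, the direct sum $\rho_{(m)} := \bigoplus_\chi \rho_\chi^{m_\chi}$ is a finite representation of $G$ defined over $F$, so its attached character $\phi_{(m)}$ lies in $\aCh^d(G)$. A direct calculation in the style of \eqref{eq:attachedCharacter} yields the formula
\begin{equation*}
  \phi_{(m)} = \sum_{\chi\in\Irr(G)} \frac{m_\chi \chi(1)}{\sum_{\chi'} m_{\chi'}\chi'(1)}\, \widetilde{\chi}.
\end{equation*}

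It then remains to approximate the arbitrary convex weights $\lambda_\chi$ by the specific rational weights of this form. Choosing $m_\chi^{(N)} := \lfloor \lambda_\chi N / \chi(1) \rfloor$ one has $|m_\chi^{(N)} \chi(1) - \lambda_\chi N| \leq \chi(1)$, so dividing by $N$ and letting $N \to \infty$ forces the coefficients of $\phi_{(m^{(N)})}$ to converge to $\lambda_\chi$; hence $\phi_{(m^{(N)})} \to \phi$ pointwise. I do not expect any serious obstacle: the only substantive ingredient is Brauer's theorem on realizability of irreducible complex representations over the cyclotomic field $\bbQ(\zeta_{|G|})$, and the remaining step is just the elementary density of the allowed rational convex combinations in the simplex.
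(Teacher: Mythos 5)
Your proposal is correct and coincides with the paper's argument: both appeal to Brauer's theorem that every complex representation of $G$ is defined over $\mathbb{Q}(\zeta_{|G|})$, identify $\Ch(G)$ as the simplex on the normalized irreducible characters, and approximate an arbitrary convex combination by normalized traces of integer direct sums of irreducibles. You merely spell out the final density step (via floors of $\lambda_\chi N/\chi(1)$) that the paper leaves implicit.
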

\begin{proof}
 Let $F$ be the number field generated by all the $|G|$-th roots of unity.
 Every finite dimensional representation
 of $G$ is defined over the field $F$, thus the characters $\widetilde{\chi}$ with $\chi \in \Irr(G)$ (and all rational convex combinations) are finite
 with $\adeg(\chi) \leq [F:\bbQ] = \varphi(|G|)$ where $\varphi$ denotes Euler's totient function.
 Finally, every convex combination of the characters $\widetilde{\chi}$ can be approximated by rational ones and the assertion follows.
\end{proof}

Let $M$ be a finite dimensional $\bbC[G]$-$\bbC[H]$-bimodule. We can interpret it as a complex 
representation (from the left!) of the direct product $G\times H$ (by making the right action of $H$ into a left action via inverses).
As such it has an ordinary character $\psi\colon G\times H \to \bbC$; more precisely, $\psi(g,h) = \Tr( m \mapsto gmh^{-1} | M)$.
This is also the character of the dual bimodule $M^*$ with the action written from the right.
\begin{lemma}\label{lem:inducedCharacterFormula}
  Let $M$ be a $\bbC[G]$-$\bbC[H]$-bimodule with ordinary character $\psi$ and let $V$ be a finite dimensional right $\bbC[H]$-module with ordinary character $\chi$.
  The ordinary character of $\Ind_M(V)$ is
  \begin{equation}\label{eq:inducedCharacterFormula}
    \Ind_{\psi}(\chi)(g) := \frac{1}{|H|} \sum_{h \in H} \psi(g,h)\chi(h).
  \end{equation}
\end{lemma}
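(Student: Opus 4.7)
The plan is to reduce the character computation to a trace on the ambient space $\Hom_{\bbC}(M,V)$ via averaging, and then to a product of traces on $M$ and $V$ separately.

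First I would observe that, writing the right $\bbC[H]$-module $V$ as a left module via $h\cdot v = vh^{-1}$ and similarly rewriting $M$ on the left, the space $\Ind_M(V) = \Hom_{\bbC[H]}(M,V)$ coincides with the $H$-invariants $\Hom_\bbC(M,V)^H$ for the action $(h\cdot\phi)(m) = \phi(mh)h^{-1}$. This inclusion is split by the averaging projection $P = \frac{1}{|H|}\sum_{h\in H} (h\cdot)$, which is an idempotent in $\End_\bbC(\Hom_\bbC(M,V))$ with image $\Hom_{\bbC[H]}(M,V)$. Denoting by $R_g$ the $G$-action $(R_g f)(m) = f(gm)$ (which is defined on all of $\Hom_\bbC(M,V)$ and commutes with $P$), the character of the induced representation at $g$ is
\begin{equation*}
   \Tr\bigl(R_g \mid \Hom_{\bbC[H]}(M,V)\bigr) \;=\; \Tr(R_g P \mid \Hom_\bbC(M,V)) \;=\; \frac{1}{|H|}\sum_{h \in H} \Tr\bigl(R_g \circ (h\cdot)\bigr).
\end{equation*}

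Next I would compute each summand $\Tr(R_g\circ(h\cdot))$ using the canonical isomorphism $\Hom_\bbC(M,V) \cong V\otimes_\bbC M^*$. Under this identification the operator $R_g$ acts as $\id_V \otimes L_g^*$, where $L_g\colon M\to M$ is left multiplication by $g$, while the operator $(h\cdot)$ decomposes as the tensor of right multiplication by $h^{-1}$ on $V$ with the dual of right multiplication $R_h$ by $h$ on $M$. Hence $R_g\circ (h\cdot) = (\cdot h^{-1})_V \otimes (L_g R_h)^*$, and its trace factors as
\begin{equation*}
   \Tr(R_g\circ (h\cdot)) \;=\; \Tr(\cdot h^{-1} \mid V) \cdot \Tr(L_g R_h \mid M) \;=\; \chi(h^{-1})\,\psi(g,h^{-1}),
\end{equation*}
using $\chi(h) = \Tr(\cdot h\mid V)$ and the defining formula $\psi(g,h) = \Tr(m\mapsto gmh^{-1}\mid M)$.

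Finally I would assemble these computations and reindex the sum $h \mapsto h^{-1}$ over $H$ to obtain
\begin{equation*}
   \Tr\bigl(R_g \mid \Ind_M(V)\bigr) \;=\; \frac{1}{|H|}\sum_{h\in H} \chi(h^{-1})\psi(g,h^{-1}) \;=\; \frac{1}{|H|}\sum_{h\in H}\psi(g,h)\chi(h),
\end{equation*}
which is exactly the claimed formula~\eqref{eq:inducedCharacterFormula}. The only real subtlety is bookkeeping the left/right module conventions consistently so that the trace of $m\mapsto gmh$ on $M$ is correctly identified with $\psi(g,h^{-1})$; once that is in place the rest is straightforward linear algebra, and I do not expect any genuine obstacle.
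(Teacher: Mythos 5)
Your proof is correct and follows essentially the same route as the paper: identify $\Ind_M(V)$ with the $H$-invariants of $\Hom_\bbC(M,V)\cong M^*\otimes_\bbC V$, insert the averaging projection, and factor the trace on the tensor product. The only difference is a cosmetic choice of $H$-action convention, which the paper sets up as $(\alpha\cdot h)(m)=\alpha(mh^{-1})h$ so that the summand comes out directly as $\psi(g,h)\chi(h)$, whereas your convention produces $\psi(g,h^{-1})\chi(h^{-1})$ and needs the reindexing $h\mapsto h^{-1}$ at the end.
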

\begin{proof}
  The $G$-module $\Hom_{\bbC[H]}(M, V)$ is the $G$-module $\Hom_{\bbC}(M, V)^H$ of $H$ invariants in $\Hom_{\bbC}(M, V) \cong M^* \otimes_\bbC V$
  (where $H$ acts like $(\alpha \cdot h)(m) = \alpha(mh^{-1})h$). The space $\Hom_{\bbC}(M, V)^H$ is a direct summand of $\Hom_{\bbC}(M, V)$ as $G$-module and the
  $G$-equivariant projection 
  $P \colon \Hom_{\bbC}(M, V) \to \Hom_{\bbC[H]}(M, V)$ is given by averaging over the $H$-action.
  Hence, we can compute the trace of $g\in G$ on $\Hom_{\bbC[H]}(M, V)$ as 
  \begin{equation*}
     \Tr(g | (M^* \otimes_\bbC V)^H) = \Tr( P \circ g | M^* \otimes_\bbC V ) = \frac{1}{|H|} \sum_{h\in H} \psi(g,h) \chi(h). \qedhere
  \end{equation*}
\end{proof}

\begin{example}
  Let us take a look at the example of ordinary induction of a character $\chi$ from a subgroup $H \leq G$ to the group $G$.
Let $M = \bbC[G]$ with the left $G$- and right $H$-action. The character $c_G$ of this bimodule is 
\begin{equation}\label{eq:c-function}
   c_G(g,h) = \bigl|\bigl\{\:f \in G\:|\: gfh^{-1} = f\:\bigr\}\bigr| = \begin{cases}
                                                                          |C_G(h)| \quad & \text{ if $g$ and $h$ are $G$-conjugate,}\\
                                                                             0 & \text{ otherwise. }
                                                                        \end{cases}
\end{equation}
With a small calculation we obtain the well-known formula for the induction of characters
\begin{equation*}
   \Ind_H^G(\chi)(g) = \frac{1}{|H|}\sum_{h\in H}  c_G(g,h) \: \chi(h)  = \frac{1}{|H|}\sum_{t\in G} \dot{\chi}(tgt^{-1}) .
\end{equation*}
where $\dot{\chi}$ denotes the function on $G$ which agrees with $\chi$ on $H$ and vanishes outside of $H$.
\end{example}

\subsection{Basic properties of induced characters}
We return to the general setting where $G$ is a group and $H$ is a finite group.
Observe that formula \eqref{eq:inducedCharacterFormula} does not use that the group $G$ is finite. We will simply use this formula to induce characters 
from $H$ to $G$ via characters of $G\times H$.
However, this time we will have to normalize the characters appropriately.

\begin{deflem}\label{deflem:induction}
Let $G$ and $H$ be groups and assume that $H$ is finite.
  For characters $\phi \in \Ch(H)$ and $\psi \in \Ch(G\times H)$ such that $\sum_{h \in H} \psi(1,h) \: \phi(h) \neq 0$ the formula 
  \begin{equation}\label{eq:induced}
    \ind_\psi(\phi)(g) = \frac{\sum_{h \in H} \psi(g,h) \: \phi(h)}{\sum_{h \in H} \psi(1,h) \: \phi(h)}
  \end{equation}
   defines a character on $G$, which is called the \emph{character induced by $\phi$ via $\psi$}.
   If $\sum_{h \in H} \psi(1,h) \: \phi(h) \neq 0$ we say that $\phi$ \emph{can be induced via} $\psi$.
\end{deflem}
\begin{proof}
 For simplicity we write $B = \sum_{h \in H} \psi(1,h) \: \phi(h)$ and we note that $B > 0$.
 Since $\psi$ is a class function on $G\times H$, it is clear that $\ind_\psi(\phi)$ is a class function on $G$.
 By construction $ \ind_\psi(\phi)(1) = 1$, hence it remains to show that $\ind_\psi(\phi)$ is of positive type.
 Note that the function $(g,h) \mapsto \psi(g,h) \phi(h)$ defines a character on $G\times H$.
 By the GNS construction this is the matrix coefficient $\psi(g,h)\phi(h) = \langle (g,h)v,v \rangle$
 for some representation $G\times H \to \U(V)$ 
 on a Hilbert space $V$ (see \cite[(3.20)]{Folland1995}). Define $w = \frac{1}{\sqrt{|H| B}} \sum_{h \in H} (1,h)\cdot v$ and observe that
  \begin{equation*}
    \ind_\psi(\phi)(g) = B^{-1} \sum_{h \in H} \psi(g,h) \: \phi(h) = \langle (g,1) w, w \rangle
  \end{equation*}
  is a matrix coefficient as well.
\end{proof}

  Let $G$ be a group and let $H \leq G$ be a finite subgroup. In this situation there is a character on $G\times H$
  which corresponds to the usual notion of induction in the setting of finite groups. In fact, we will simply use 
  the function $c_G$ in \eqref{eq:c-function} and normalize it.
  \begin{lemma}
   Let $G$ be a group and let $H\leq G$ be a finite subgroup.
   The function $i_G\colon G\times H \to \bbC$ defined as
   \begin{equation}\label{eq:i-function}
        i_G(g,h) = \begin{cases}
                       [G:C_G(h)]^{-1} \quad & \text{ if $g$ and $h$ are conjugate}\\
                       0 & \text{ otherwise.}
                   \end{cases}
   \end{equation}
   is a character on $G \times H$.
  \end{lemma}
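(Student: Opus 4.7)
The properties of being a class function and of satisfying $i_G(1,1)=1$ are immediate from the definition: for any $(x,y)\in G\times H$, the centralizer $C_G(yhy^{-1})=yC_G(h)y^{-1}$ has the same index as $C_G(h)$, and $G$-conjugacy of the first coordinate is preserved under simultaneous conjugation; also $C_G(1)=G$ yields $i_G(1,1)=1$. The essential content is positive-definiteness, which I plan to establish directly.

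For finite sequences $(g_1,h_1),\ldots,(g_n,h_n)\in G\times H$ and $\alpha_1,\ldots,\alpha_n\in\bbC$, I must show $\sum_{i,j}\alpha_i\overline{\alpha_j}\,i_G(g_i^{-1}g_j,h_i^{-1}h_j)\geq 0$. The main idea is to realize each term as a counting density of an explicit subset of $G$. Define $T_i\colon G\to G$ by $T_i(x):=g_ixh_i^{-1}$ and $A_{ij}:=\{x\in G: T_i(x)=T_j(x)\}$. A short manipulation rewrites $T_i(x)=T_j(x)$ as $x^{-1}(g_j^{-1}g_i)x=h_j^{-1}h_i$, whence $A_{ij}$ is non-empty exactly when $g_i^{-1}g_j$ is $G$-conjugate to $h_i^{-1}h_j$, in which case it is a coset of $C_G(h_i^{-1}h_j)$. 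Thus $i_G(g_i^{-1}g_j,h_i^{-1}h_j)$ is precisely the density of $A_{ij}$ in $G$ (with the convention that cosets of infinite-index subgroups and the empty set have density $0$).

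The key bookkeeping step is to introduce $H_\mathrm{FC}:=\{h\in H:[G:C_G(h)]<\infty\}$, which is a subgroup of $H$ since $C_G(h_1h_2)\supseteq C_G(h_1)\cap C_G(h_2)$. The term $i_G(g_i^{-1}g_j,h_i^{-1}h_j)$ vanishes unless $h_i^{-1}h_j\in H_\mathrm{FC}$, so the relation $i\sim j\iff h_i^{-1}h_j\in H_\mathrm{FC}$ is an equivalence relation on the index set, and only pairs within a common equivalence class $E$ contribute. For a fixed such $E$, set $K_E:=\bigcap_{i,j\in E}C_G(h_i^{-1}h_j)$, a finite intersection of finite-index subgroups and hence itself of finite index in $G$. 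By construction $K_E\subseteq C_G(h_i^{-1}h_j)$ for all $i,j\in E$, so each non-empty $A_{ij}$ is a union of $K_E$-cosets.

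The contribution of a single equivalence class therefore rewrites as
\[
\sum_{i,j\in E}\alpha_i\overline{\alpha_j}\,i_G(g_i^{-1}g_j,h_i^{-1}h_j)=\frac{1}{[G:K_E]}\sum_{[x]\in G/K_E}\sum_{i,j\in E}\alpha_i\overline{\alpha_j}\,\mathbf{1}[T_i(x)=T_j(x)],
\]
the indicator being well-defined on $K_E$-cosets. For each fixed $x$ the inner sum equals $\sum_{y\in G}\bigl|\sum_{i\in E,\,T_i(x)=y}\alpha_i\bigr|^2\geq 0$, so each class contributes non-negatively and summing over classes finishes the argument. I do not anticipate a hard step: the only mildly subtle point is that one must partition by the equivalence relation defined via $H_\mathrm{FC}$ before descending to a finite quotient, since without this restriction the intersection defining $K_E$ would fail to be of finite index and the density interpretation would break down.
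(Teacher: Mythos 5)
Your proof is correct. The essential mechanism agrees with the paper's: both arguments exhibit $i_G$ as a normalized count of $\{0,1\}$-valued functions that are manifestly of positive type. The bookkeeping, however, differs. The paper first reduces to the case where every $C_G(h)$ has finite index, invoking the standard fact that extending a function of positive type by zero to a supergroup preserves positivity; it then fixes a single finite-index normal subgroup $N\trianglelefteq G$ inside $\bigcap_{h\in H}C_G(h)$ and writes the \emph{global} decomposition $i_G = \frac{1}{[G:N]}\sum_{g\in G/N} d_g$, where $d_g(x,h)=\mathbf{1}[x=ghg^{-1}]$ is recognized as the diagonal matrix coefficient $\langle (x,h)\cdot g,\,g\rangle$ for the $G\times H$-action $(x,h)\cdot g = xgh^{-1}$ on $L^2(G)$. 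You instead verify positive semi-definiteness pointwise for a fixed test sequence: the equivalence relation $i\sim j\iff h_i^{-1}h_j\in H_{\mathrm{FC}}$ makes the block-diagonality (which the paper packages as the extension-by-zero lemma) explicit, and for each block you choose a finite-index subgroup $K_E$ calibrated to the finitely many elements actually appearing, rather than a globally chosen $N$. Note that your indicator $\mathbf{1}[T_i(x)=T_j(x)]$ is precisely $d_x(g_i^{-1}g_j,h_i^{-1}h_j)$ in the paper's notation, so the two arguments coincide once unwound; yours is somewhat more hands-on and self-contained, while the paper's lets one quote general facts about positive-type functions and the permutation representation on $L^2(G)$.
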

  \begin{remark}
    Here we make the convention that $[G:C_G(h)]^{-1}$ is zero if the index is infinite.
  \end{remark}
  \begin{proof}
   Clearly, $i_G$ is a class function and $i_G(1,1) = 1$.
   We proceed to show that it is of positive type.
 
 The set  $H_0 =\{h\in H\:|\:[G:C_G(h)] < \infty\}$ is a normal subgroup of $H$. Indeed, for all $h_1, h_2 \in H_0$ we have
 $C_G(h_1h_2) \supseteq C_G(h_1) \cap C_G(h_2)$ and consequently $C_G(h_1h_2)$ has finite index in $G$. 
 The function $i_G(g,h) = 0 $ for all $h \not \in H_0$. Extending a function by $0$ to a supergroup preserves positivity,
 so we may assume that $H_0 = H$; i.e.\
 the centralizer $C_G(h)$ of every element of $h \in H$ has finite index in $G$.
 
 Take a finite index normal subgroup $N \normal G$ which is contained in the centralizer of every $h\in H$.
 Put $d_g(x,h) = 1$ if $x = ghg^{-1}$ and $d_g(x,h) = 0$ otherwise.
 The set of elements in $G$ which conjugate $h \in H$ to $x \in G$ is the union of $[C_G(h) : N]$ cosets of $N$.
 This shows that $i_G(x,h) = \frac{1}{[G:N]}\sum_{g \in G/N} d_g(x,h)$.
 The space of functions of positive type is closed under positive linear combinations, thus it is sufficient to check that the functions $d_g$ are of positive type.
 
 Consider the Hilbert space $L^2(G)$ with the action of $G\times H$ defined by $(x,h)\cdot g = xgh^{-1}$.
 Since $d_g(x,h) = \langle (x,h) \cdot g, g\rangle$, this is a function of positive type.
\end{proof}

\begin{definition}
 Let $G$ be a group and let $H$ be a finite subgroup.
 For $\phi \in \Ch(H)$ the character induced by $\phi$ to $G$ via $i_G$ will be denoted
 by $\ind_H^G(\phi)$.
\end{definition}
Observe that $\sum_{h\in H} i_G(1,h) \phi(h) = 1$ and so every character can be induced via $i_G$ and the normalization in \eqref{eq:induced} is not necessary in this case.
\begin{example}
 Let $G$ be a group and let $H = \{1\}$ be the trivial subgroup.
 The group $H$ has only the trivial character $\triv$. The character induced to $G$ from the trivial character on $\{1\}$ is the regular character $\delta^{(2)}_G$ of $G$.
\end{example}
\begin{lemma}\label{lem:convergenceInducedCharacters}
 Let $H\leq G$ be a finite subgroup. Suppose that $N_1 \supset N_2 \supset N_3 \supset \dots$ is a decreasing sequence of normal subgroups of $G$ with
 $\bigcap_{k} N_k = \{1\}$. Assume $N_k \cap H = \{1\}$ for all $k$ and write $G_k = G/N_k$.
 Then
 \begin{equation*}
  \lim_{k\to \infty} \infl^G_{G_k} \ind_{H}^{G_k}(\phi)  = \ind_H^G(\phi)
 \end{equation*}
 for every character $\phi \in \Ch(H)$.
\end{lemma}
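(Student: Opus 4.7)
The plan is to reduce the assertion to pointwise convergence of the coefficient functions $i_{G_k}$, and then split into two cases depending on whether the $G$-conjugacy class of $h$ is finite or infinite. By the induction formula~\eqref{eq:induced} and the finiteness of $H$, it suffices to prove that for each fixed pair $(g,h) \in G \times H$
\begin{equation*}
\lim_{k \to \infty} i_{G_k}(\pi_k(g), h) = i_G(g, h),
\end{equation*}
where $\pi_k \colon G \to G_k$ is the quotient map. The assumption $N_k \cap H = \{1\}$ ensures that $H$ embeds into each $G_k$ via $\pi_k$, so $h$ is unambiguous on both sides. The essential observation is that the $G_k$-conjugacy class of $h$ is precisely $\pi_k(G \cdot h)$, the image of the $G$-conjugacy class.

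First I would handle the case $|G \cdot h| = n < \infty$. Writing $G \cdot h = \{h_1, \dots, h_n\}$, the nonidentity elements $h_i h_j^{-1}$ (for $i \neq j$), together with the elements $g h_j^{-1}$ when $g \notin G \cdot h$, form a finite subset of $G \setminus \{1\}$. Because $\bigcap_k N_k = \{1\}$, each such nonidentity element escapes $N_k$ for $k$ large; by finiteness the same holds uniformly. Hence for $k \gg 0$ the map $\pi_k$ is injective on $G \cdot h$, and $\bar g \in G_k \cdot h$ if and only if $g \in G \cdot h$. Either way, the two sides of the target equality coincide for all sufficiently large $k$.

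Next I would treat the case $|G \cdot h| = \infty$, which forces $i_G(g,h) = 0$. Here the argument is that $|G_k \cdot h| \to \infty$. Given any $N \in \bbN$, select $N$ pairwise distinct conjugates of $h$ in $G$; by the same separation argument as above, their images in $G_k$ remain pairwise distinct for $k$ sufficiently large, so $|G_k \cdot h| \geq N$ eventually. This yields the uniform bound $i_{G_k}(\bar g, h) \leq 1/|G_k \cdot h|$, which tends to $0$.

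The main obstacle lies in this second case. It is \emph{not} true in general that $\bar g$ eventually ceases to be $G_k$-conjugate to $h$ when $g$ is not $G$-conjugate to $h$: the class $G \cdot h$ could accumulate at $g$ in the profinite-like topology determined by $\{N_k\}$. The resolution is that this potential failure is harmless, because the weight $i_{G_k}(\bar g, h)$ carries the reciprocal of the size of the growing $G_k$-conjugacy class of $h$, and this reciprocal forces convergence to zero regardless.
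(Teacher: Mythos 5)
Your proof is correct and follows essentially the same approach as the paper: both reduce to pointwise convergence of $i_{G_k}$, both use the fact that a finite set of nonidentity elements of $G$ separates modulo $N_k$ for $k$ large, and both rest on the observation that conjugacy class sizes (equivalently, centralizer indices) stabilize when finite and grow without bound when infinite. Your case split is organized by the size of the class of $h$ rather than first by whether $g$ and $h$ are conjugate, but the technical content (including the crucial point that in the infinite-class case the growing class size makes the precise conjugacy relation in $G_k$ irrelevant) is identical to the paper's ``Claim'' and its Case 2.
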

\begin{proof}
The identity
 $\infl^G_{G_k} \ind_{H}^{G_k}(\phi)(g) = \sum_{h\in H} \phi(h) i_{G_k}(gN_k,hN_k)$
shows that it is sufficient to verify
\begin{equation}\label{eq:limit-of-i}
   \lim_{k\to\infty} i_{G_k}(gN_k,hN_k) = i_G(g,h)
\end{equation}
for all $g\in G$ and $h\in H$.

\medskip
 
\noindent\emph{Claim:} $\lim_{k\to\infty} [G_k : C_{G_k}(xN_k)] = [G:C_G(x)]$ for all $x\in G$.\\
Note that $[G_k : C_{G_k}(xN_k)] \leq [G:C_G(x)]$ for all $k\in \bbN$.
Let $Y$ be a set of left coset representatives of $C_G(x)$ in $G$. For every finite subset $S \subseteq Y$ 
the elements $\{sxs^{-1} \:|\: s\in S \}$ are distinct modulo $N_k$ for all sufficiently large $k$.
We have $[G_k:C_{G_k}(xN_k)] \geq  |S|$ for all large $k$. Hence, if $Y$ is finite the claim follows with $|S| = |Y| = [G:C_G(x)]$.
On the other hand, if $Y$ is infinite, then the argument shows that $[G_k:C_{G_k}(xN_k)]$ tends to infinity with $k$.

\medskip

\noindent The verification of \eqref{eq:limit-of-i} is done by case distinction.\\
\emph{Case 1}: $g$ and $h$ are conjugate in $G$. In this case $gN_k$ and $hN_k$ are conjugate in $G_k$ (for all $k$). The claim above immediately implies \eqref{eq:limit-of-i}.\\
\emph{Case 2}: $g$ and $h$ are not conjugate in $G$. If $[G:C_G(h)]$ is infinite, then once again the claim again yields $\lim_{k\to\infty}[G_k:C_{G_k}(hN_k)]^{-1} = 0 $ and \eqref{eq:limit-of-i}
follows. Finally, assume that $[G:C_G(h)] < \infty$. Then $h$ has only a finite set $S$ of $G$-conjugates. For all sufficiently large $k$ the $\{sN_k\:|\:s\in S\}$ are distinct from $gN_k$
and so $hN_k$ is not conjugate to $gN_k$ for all large $k \in \bbN$.
\end{proof}

\subsection{Induction via arithmetically hyperlinear characters}
Lemma \ref{lem:convergenceInducedCharacters} yields the pointwise convergence of certain
sequences of induced characters. In order to prove the approximation property for this sequence it is sufficient (in view of Theorem \ref{thm:approxArith}) 
to find conditions under which the characters are arithmetically hyperlinear of bounded degree.
Before we give the result we need the following variation of the concept of arithmetically hyperlinear (resp.\ sofic)
characters.
Let $G$ and $H$ be groups. A character $\psi \in \Ch(G\times H)$ is arithmetically hyperlinear of degree at most $d$ (resp.\ sofic) relative to $H$, if there is an epimorphism
$\pi\colon \widetilde{G} \to G$ such that $\infl_{G\times H}^{\widetilde{G} \times H} (\psi)$ lies in the closure of $\aCh^d(\widetilde{G}\times H)$ (resp.\ $\Perm(\widetilde{G}\times H)$).
\begin{theorem}\label{thm:inductionViaArithmeticallyHyperlinear}
  Let $G$ be a group and let $H$ be a finite group.
  Let $\phi \in \Ch(H)$ and $\psi \in \Ch(G\times H)$ be characters such that $\phi$ can be induced via $\psi$. If $\psi$ is arithmetically hyperlinear of degree at most $d$ relative to $H$,
  then $\ind_\psi(\phi)$ is arithmetically hyperlinear of degree at most $d\varphi(|H|)$.
\end{theorem}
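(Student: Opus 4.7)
My plan is to approximate $\ind_\psi(\phi)$ by induced characters attached to finite representations, using simultaneous approximation of $\psi$ and $\phi$ and controlling the arithmetic degree via a compositum of number fields. A direct check from \eqref{eq:induced} shows that induction commutes with inflation: for any epimorphism $\pi \colon \widetilde{G} \to G$, one has $\infl_G^{\widetilde{G}}(\ind_\psi(\phi)) = \ind_{\infl(\psi)}(\phi)$. Choosing $\widetilde{G}$ to witness the relative arithmetic hyperlinearity of $\psi$, and replacing $G$ by $\widetilde{G}$ and $\psi$ by its inflation, I may assume from the outset that $\psi \in \overline{\aCh^d(G \times H)}$, so it suffices to show $\ind_\psi(\phi) \in \overline{\aCh^{d\varphi(|H|)}(G)}$.

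Next, I pick nets $\psi_\alpha \in \aCh^d(G \times H)$ with $\psi_\alpha \to \psi$ and $\phi_\beta \in \aCh^{\varphi(|H|)}(H)$ with $\phi_\beta \to \phi$, the latter existing by Lemma \ref{lem:ArithmeticDegreeFiniteGroup}. The expression $\sum_{h\in H}\psi'(1,h)\phi'(h)$ is jointly continuous in $(\psi',\phi')$ and nonzero at $(\psi,\phi)$ by the inducibility hypothesis, so for large $(\alpha,\beta)$ the character $\ind_{\psi_\alpha}(\phi_\beta)$ is defined and, by continuity of the rational formula \eqref{eq:induced}, converges pointwise to $\ind_\psi(\phi)$. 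It therefore remains to show $\ind_{\psi_\alpha}(\phi_\beta) \in \aCh^{d\varphi(|H|)}(G)$ for each such $(\alpha,\beta)$.

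For this, fix a model $(V_F, \sigma_F, \iota)$ over a number field $F$ with $[F:\bbQ] \leq d$ of a finite representation of $G \times H$ whose normalized character is $\psi_\alpha$, and a model $(W_K, \rho_K, \kappa)$ over a number field $K$ with $[K:\bbQ] \leq \varphi(|H|)$ of a finite representation of $H$ whose normalized character is $\phi_\beta$. Let $L \subseteq \bbC$ be the compositum $\iota(F)\cdot\kappa(K)$, so that $[L:\bbQ] \leq d\varphi(|H|)$ by the tower law. Viewing $V_F$ as a $\bbC[G]$-$\bbC[H]$-bimodule (with the right $H$-action obtained by inverting the $H$-factor), base-change both models to $L$ and form the $L$-vector space $U_L = \Hom_{L[H]}(V_F\otimes_F L,\, W_K\otimes_K L)$. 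The $G$-action on $V_F \otimes_F L$ factors through a finite quotient, so the induced $G$-action on $U_L$ does as well, and $U_L \otimes_L \bbC$ therefore defines a finite representation of $G$ with a model over $L$. Applying Lemma \ref{lem:inducedCharacterFormula} over $\bbC$ and taking ratios to pass to normalized characters, the normalized character of $U_L \otimes_L \bbC$ equals $\ind_{\psi_\alpha}(\phi_\beta)$, giving the required arithmetic degree bound.

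The step I expect to be the main obstacle is the careful bookkeeping of normalizations: Lemma \ref{lem:inducedCharacterFormula} computes ordinary (un-normalized) characters, whereas \eqref{eq:induced} is a normalized ratio, and one has to verify that all the scalar factors ($\dim V$, $\dim W$, and $|H|$) cancel correctly between numerator and denominator so that the normalized trace of $U_L \otimes_L \bbC$ really coincides with $\ind_{\psi_\alpha}(\phi_\beta)$. A secondary point, that $\Hom_{L[H]}$ commutes with the base change from $L$ to $\bbC$, is automatic since $L[H]$ is semisimple by Maschke's theorem.
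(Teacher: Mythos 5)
Your proof follows essentially the same route as the paper's: reduce to a cover $\widetilde{G}$ via inflation, approximate $\psi$ and $\phi$ pointwise by finite characters of arithmetic degree $\leq d$ and $\leq \varphi(|H|)$ respectively (the latter by Lemma~\ref{lem:ArithmeticDegreeFiniteGroup}), check by continuity of \eqref{eq:induced} that the induced characters converge and are eventually defined, and bound the arithmetic degree of $\ind_{\psi_\alpha}(\phi_\beta)$ via a compositum $L=FK$ and the representation $\Hom_{L[H]}(V_F\otimes L,W_K\otimes L)$. The normalization worry you flag is easily settled: writing $\dim V\cdot\psi_\alpha$ and $\dim W\cdot\phi_\beta$ for the ordinary characters, Lemma~\ref{lem:inducedCharacterFormula} yields $\frac{\dim V\dim W}{|H|}\sum_h\psi_\alpha(g,h)\phi_\beta(h)$ as the ordinary character of the induced module, and dividing by its value at $g=1$ to normalize kills all three scalar factors, recovering exactly $\ind_{\psi_\alpha}(\phi_\beta)$ from \eqref{eq:induced}.
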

\begin{proof}
 Put $e = \varphi(|H|)$.
 Let $\pi \colon \widetilde{G} \to G$ be an epimorphism such that $\vartheta = \infl_{G\times H}^{\widetilde{G} \times H} (\psi)$
 lies in $\overline{\aCh^d(\widetilde{G}\times H)}$.
 Since $\infl_{G}^{\widetilde{G}}(\ind_\psi(\phi)) = \ind_\vartheta(\phi)$, it is sufficient to verify that $\ind_\vartheta(\phi)$
 can be approximated by finite characters of arithmetic degree at most $de$.
 
 Choose a net $(\vartheta_j)_{j \in J} \in \aCh^d(\widetilde{G}\times H)$ of finite characters which converges pointwise to 
 $\vartheta$.
 By Lemma \ref{lem:ArithmeticDegreeFiniteGroup} there is a sequence of finite characters $(\phi_n)_{n\in \bbN} \in \Ch^e_f(H)$ which converges to 
 $\phi$. 
 Observe that the net of sums  $S_{j,n} = \sum_{h \in H} \vartheta_j(1,h) \: \phi_n(h)$ over $J\times \bbN$ converges to the non-zero sum $S = \sum_{h \in H} \vartheta(1,h) \: \phi(h)$.
 We conclude that $S_{j,n} \neq 0$ for sufficiently large $j$ and $n$; so $\phi_n$ can be induced via $\vartheta_j$.
 
 We verify that the net of characters $\ind_{\vartheta_j}(\phi_n)$ converges to $\ind_\vartheta(\phi)$.
 In fact, for all $g \in \widetilde{G}$ and $h \in H$ we obtain
 \begin{align*}
     \lim_{(j,n) \in J\times\bbN} \ind_{\vartheta_j}(\phi_n)(g,h) &=  \lim_{(j,n) \in J\times\bbN} \frac{1}{S_{j,n}} \sum_{h \in H} \vartheta_j(g,h) \: \phi_n(h) \\
     &= \frac{1}{S} \sum_{h \in H} \vartheta(g,h)\: \phi(h) = \ind_{\vartheta}(\phi)(g,h).
 \end{align*}

 Eventually we prove that for fixed $j\in J$ and $n\in \bbN$ the character $\ind_{\vartheta_n}(\phi_n)$ is finite of arithmetic degree at most $de$.
 Let $\rho_j\colon \widetilde{G}\times H \to \GL(V_j)$ and $\eta_n\colon H \to \GL(W_n)$ denote representations attached to $\vartheta_j$ and $\phi_n$.
 Let $F,E$ be algebraic number fields of degree $d = [F:\bbQ]$ and $e= [E:\bbQ]$ such that
 $\rho_j$ is defined over $F$ and $\eta_n$ is defined over $E$. The induced character $\ind_{\vartheta_j}(\phi_n)$
 is the character of the $\widetilde{G}$-representation on $\Ind_{V_j}(W_n)$ (see Lemma \ref{lem:inducedCharacterFormula}).
 This representation is finite and can be defined over the field compositum $L = FE$.
 This finishes the proof, since $[L : \bbQ] \leq de$.
\end{proof}

\subsection{Relatively sofic groups}
Our construction of induced characters leads to the following definition of relatively sofic groups.
We discuss some basic properties.
\begin{definition}
   Let $G$ be a group and $H \leq G$ be a subgroup.
   We say that $G$ is \emph{sofic relative to $H$} if the character $i_G\in\Ch(G\times H)$ is sofic relative to $H$.
\end{definition}
\begin{question}
  Let $G$ be a sofic group and let $H\leq G$ be a finite subgroup. Is $G$ sofic relative to $H$?
\end{question}
In view of our applications this question arises quite naturally.
The following result will provide a partial answer. It illustrates that there is a large class of such groups which clearly contains all 
residually finite groups.

\begin{proposition}\label{prop:criterionRelativeSoficity}
  Let $G$ be a sofic group and let $H \leq G$ be a finite subgroup.
  If there is a finite index subgroup $N \leq_{f.i.} G$ such that $N \cap H = \{1\}$, then $G$ is sofic relative to $H$.
\end{proposition}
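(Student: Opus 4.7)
The plan is to exhibit $i_G\in\Ch(G\times H)$ as sofic relative to $H$ by constructing explicit permutation representations of $F\times H$, for a free group $F$ surjecting onto $G$, whose characters converge to the pullback of $i_G$. First I would replace $N$ by its normal core, so that $N\trianglelefteq G$ is a finite-index normal subgroup with $N\cap H=\{1\}$; then $K:=G/N$ is a finite group and $H$ injects into $K$ through the quotient $q\colon G\to K$. Using soficity of $G$ and Proposition~\ref{prop:CharacterSofic}, fix an epimorphism $\pi\colon F\to G$ from a free group together with a net of permutation representations $\alpha_n\colon F\to \Aut(X_n)$ whose characters $\tau_n$ converge to $\pi^*(\delta^{(2)}_G)$. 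Set $\bar\pi=q\circ\pi$ and let $F_0=\ker\bar\pi$, which is a finite-index (hence free) normal subgroup of $F$ with $\pi(F_0)=N$.

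The construction combines the sofic approximation on $X_n$ with the finite $K$-coset geometry. Choosing a set-theoretic section $\sigma\colon K\to F$ of $\bar\pi$ with $\sigma(1)=1$ and $\pi(\sigma(h))=h$ for $h\in H\subseteq K$, I would take $Y_n:=X_n\times K$ and equip it with the induced $F$-action
\[
    \widetilde f \cdot (x,k) = \bigl(\alpha_n(\sigma(\bar\pi(\widetilde f)k)^{-1}\widetilde f\sigma(k))(x),\; \bar\pi(\widetilde f)k\bigr)
\]
(a genuine $F$-action, since the argument of $\alpha_n$ lies in $F_0$) together with the right $H$-action $(x,k)\cdot h=(x,kh^{-1})$ via the inclusion $H\hookrightarrow K$. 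A fixed-point count then gives a product expression for the permutation character $\chi_n(\widetilde f,h)$ in which the $X_n$-factor is a sofic approximation of a regular-type character on $N$ and the $K$-factor equals $i_K(\bar\pi(\widetilde f),h)$; pointwise convergence to $(\pi\times\id)^*(i_G)$ then follows by an argument in the spirit of Lemma~\ref{lem:convergenceInducedCharacters}, with the sofic data supplying the shrinking virtual chain of finite-index subgroups in place of a residually finite chain.

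The hard part is that the $F$- and $H$-actions constructed above commute only up to an $F_0$-valued discrepancy whose image in $N$ under $\pi$ is the commutator $[h, n_k(\pi(\widetilde f))]$ where $n_k(g)=\widehat{gk}^{-1}g\widehat k\in N$ and the hat denotes $\pi\circ\sigma$; this commutator is in general nontrivial. To kill it, I would use two reductions. First, replace $N$ by $N\cap C_G(H)$: this is finite-index in $G$ whenever every centralizer $C_G(h)$ for $h\in H$ has finite index, and in the remaining case the relevant entries of $i_G$ vanish by convention and the claim reduces to soficity of the product $G\times H$; so without loss of generality $N$ centralizes $H$ and the discrepancy lands in $\ker\pi\cap F_0$. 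Second, one arranges the permutation approximation $\alpha_n$ so that the discrepancy elements arising from any prescribed finite test set lie in $\ker\alpha_n$, which is possible because these elements lie in $\ker\pi$ and sofic approximations can be made arbitrarily faithful on any prescribed finite subset of $F_0\cap\ker\pi$. Combined, these adjustments yield an honest $F\times H$-permutation representation whose character has the desired limit, establishing that $G$ is sofic relative to $H$.
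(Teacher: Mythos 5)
Your construction via $Y_n = X_n \times K$ with the cocycle-twisted $F$-action is in the same general spirit as the paper's bi-induction in Lemma~\ref{lem:reduceToCentral}, and the computation of the discrepancy commutator that obstructs commutativity of the $F$- and $H$-actions is correct. However, both of the reductions you propose to kill that discrepancy have genuine gaps.

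The second reduction is the fatal one. You assert that sofic approximations $\alpha_n\colon F\to\Aut(X_n)$ of $\pi^*(\delta^{(2)}_G)$ can be arranged so that any prescribed finite collection of elements of $\ker\pi$ lies in $\ker\alpha_n$. Soficity only gives that $\alpha_n(w)$ fixes a $(1-\varepsilon)$-fraction of $X_n$ for $w\in\ker\pi$; it does not make $\alpha_n(w)=\id_{X_n}$. Forcing elements of $\ker\pi$ exactly into $\ker\alpha_n$ would require the approximation to factor through a quotient of $F$, which amounts to a residual-finiteness-type property that general sofic groups do not enjoy. Restricting to the common fixed-point set is not an option either, since that set is not $\alpha_n$-invariant. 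So what you get is a pair of actions on $Y_n$ that commute on a large fraction of orbits but not everywhere, which is \emph{not} a permutation representation of $F\times H$, and therefore its character is not a permutation character of $F\times H$. The paper avoids this problem by working with an honest $\widetilde F$-$H_0$-biset $X$ (not a bare $F$-set) and then performing the double-coset induction $Y=(F\times X\times H)/\sim$, in which commutativity is built in from the start rather than repaired afterwards.

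The first reduction is also incomplete. When some $h\in H$ has $[G:C_G(h)]=\infty$, you cannot shrink $N$ to centralize $H$, and the phrase ``the claim reduces to soficity of the product $G\times H$'' does not say what permutation biset you would use or why its character converges to $i_G$; in particular $i_G(g,1)=\delta^{(2)}_G(g)$ is nonzero, so the problem does not disappear just because $i_G(\cdot,h)\equiv 0$ for $h\notin H_0$. This case is exactly what the paper's Lemma~\ref{lem:reduceToCentral} is designed to absorb: one replaces $H$ by $H_0=\{h\in H: [G:C_G(h)]<\infty\}$ and $G$ by a finite-index subgroup, and the bi-induction produces the required vanishing of fixed points at $h\notin H_0$ automatically through the spreading over $H/H_0$-cosets. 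After two applications of that lemma the paper lands on the bare direct-product case $K=N\times H$, where the identity $i_K((n,h_1),h_2)=\delta_N^{(2)}(n)\,i_H(h_1,h_2)$ reduces everything to multiplying a permutation approximation of $\delta_N^{(2)}$ by the finite permutation character $i_H$, with no commutativity issue to patch. I would recommend restructuring your argument to prove an analogue of Lemma~\ref{lem:reduceToCentral} first, rather than trying to repair an approximate action into an exact one.
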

\begin{corollary}
  Let $G$ be a sofic group and let $H$ be a finite group acting on $G$ by automorphisms. The semidirect product $G \rtimes H$ is sofic relative to $H$.
\end{corollary}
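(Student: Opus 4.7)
The plan is to apply Proposition \ref{prop:criterionRelativeSoficity} directly, so I need to verify its two hypotheses for the ambient group $\Gamma = G \rtimes H$ and the finite subgroup $H \leq \Gamma$: first, that $\Gamma$ itself is sofic, and second, that $\Gamma$ contains a finite index subgroup $N$ with $N \cap H = \{1\}$.

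The second condition is essentially free: the natural copy of $G$ inside $\Gamma = G \rtimes H$ is a normal subgroup of index $|H| < \infty$, and by the very construction of the semidirect product one has $G \cap H = \{1\}$ as subsets of $\Gamma$. So we can take $N = G$.

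For the first condition, I need that the semidirect product of a sofic group by a finite group is again sofic. This is a standard closure property of the class of sofic groups: sofic groups are closed under extensions with amenable quotient (a result of Elek--Szab\'o), and finite groups are amenable. One can also verify it directly in this situation by taking sofic approximations $f_n \colon G \to \Aut(X_n)$ of $G$ on larger and larger finite sets $X_n$ and assembling permutation approximations of $\Gamma$ on $X_n \times H$ using the split structure of the extension; the key point is that finitely many conjugates of a given finite subset of $G$ by elements of $H$ remain finite, so the soficity conditions on $G$ transfer to $\Gamma$ up to a harmless factor of $|H|$. Either route gives that $\Gamma$ is sofic.

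With both hypotheses of Proposition \ref{prop:criterionRelativeSoficity} in hand, the proposition yields that $\Gamma = G \rtimes H$ is sofic relative to $H$, which is the desired conclusion. The only nontrivial input is the closure of sofic groups under finite (indeed, amenable) extensions; no further analysis of the character $i_\Gamma$ is required, since that has been packaged into Proposition \ref{prop:criterionRelativeSoficity}. I do not foresee a real obstacle — the corollary is essentially a formal consequence of the proposition together with a well-known permanence property of sofic groups.
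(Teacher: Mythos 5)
Your proposal is correct and follows exactly the same route as the paper: the paper's own proof of this corollary just cites Proposition \ref{prop:criterionRelativeSoficity} together with the fact that $G \rtimes H$ is sofic by \cite[Thm.~1]{ElekSzabo2006}. You have simply spelled out the two hypotheses of the proposition a bit more explicitly (taking $N = G$ and invoking closure of sofic groups under finite/amenable extensions), which matches the paper's argument.
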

\begin{proof}[Proof of the corollary]
 This follows from the proposition and the fact that the semidirect product $G \rtimes H$ is sofic by \cite[Thm.~1]{ElekSzabo2006}. 
\end{proof}

Before we give the proof of the proposition, 
we discuss a lemma which allows us to reduce to the case where the centralizer $C_G(h)$ of every element $h \in H$ has finite index in $G$.
\begin{lemma}\label{lem:reduceToCentral}
 Let $G$ be a group and let $H$ be a finite subgroup. 
 Let $H_0 =\{\:h \in H\:|\: [G:C_G(h)] < \infty\:\}$  and 
 let $K \leq_{f.i.} G$ be a finite index subgroup which contains $H_0$.
 If $K$ is sofic relative to $H_0$, then $G$ is sofic relative to $H$.
\end{lemma}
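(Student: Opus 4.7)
The plan is to realise $i_G$ as the ordinary finite-index induction of the character $i_K$ (extended by zero to $K \times H$) from the subgroup $K \times H_0 \leq G \times H$, and then transfer permutation-character approximations through the induction.

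I would begin by noting that $H_0$ is a normal subgroup of $H$: for $h \in H_0$ and $h' \in H$ one has $C_G(h'^{-1}hh') = h'^{-1}C_G(h)h'$, which has the same finite index in $G$. The heart of the argument is then the character identity
\begin{equation*}
   i_G(g,h) \;=\; \frac{1}{[G:K][H:H_0]} \sum_{\substack{g'K \in G/K,\; h'H_0 \in H/H_0 \\ g'^{-1} g g' \in K}} i_K\!\left(g'^{-1}gg',\; h'^{-1}hh'\right)
\end{equation*}
for all $(g,h) \in G \times H$, with the convention that $i_K$ vanishes on $K \times (H \setminus H_0)$. When $h \in H \setminus H_0$ the normality of $H_0$ kills every summand while the left hand side vanishes by the convention in \eqref{eq:i-function}. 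When $h \in H_0$ but $g \not\sim_G h$, any non-vanishing term would provide an element $g'kh'^{-1}$ conjugating $h$ to $g$, so both sides again vanish. The substantive case is $h \in H_0$ with $g \sim_G h$: using the transfer identity $C_G(h) \cap h'Kh'^{-1} = h'C_K(h'^{-1}hh')h'^{-1}$ one shows that each coset $h'H_0$ contributes exactly $[G:K]\,[G:C_G(h)]^{-1}$ to the weighted sum, so the $[H:H_0]$ cosets together yield $[H:H_0]\,[G:K]\,[G:C_G(h)]^{-1}$, matching the left hand side after dividing by $[G:K][H:H_0]$.

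With the identity in hand the rest is standard. Choose a free group $F$ with an epimorphism $F \twoheadrightarrow G$ and let $F_K$ be the preimage of $K$; by Nielsen-Schreier $F_K$ is free of index $[G:K]$ and surjects onto $K$. The relative-soficity analogue of Lemma \ref{lem:soficCharacters}, obtained by lifting through the free group $F_K$ and pulling back permutation characters, upgrades the assumption that $K$ is sofic relative to $H_0$ to a net of permutation characters $\pi_j$ of $F_K \times H_0$ converging pointwise to the inflation of $i_K$. For each $j$, realise $\pi_j$ as a finite $F_K \times H_0$-set $X_j$ and form the induced $F \times H$-set $Y_j := (F \times H) \times_{F_K \times H_0} X_j$. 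Its permutation character is the classical finite-index induced character of $\pi_j$; being a finite sum, this operation commutes with pointwise limits and with inflation through the compatible quotients $F_K \twoheadrightarrow K$ and $F \twoheadrightarrow G$. Hence the permutation characters of the $Y_j$ converge pointwise to the inflation of $\Ind^{G \times H}_{K \times H_0}(i_K) = i_G$, proving that $i_G$ is sofic relative to $H$.

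The main obstacle is the character identity above. It is delicate because $h' \in H$ generally does not normalise $K$, so one cannot simplify $C_K(h'^{-1}hh')$ by moving $h'$ inside; instead one must transport through the identity $C_G(h) \cap h'Kh'^{-1} = h'C_K(h'^{-1}hh')h'^{-1}$ in $G$ and keep careful track of indices of finite-index subgroups of $C_G(h)$ (reducing, if desired, to a finite quotient $G/N$ with $N \trianglelefteq G$ of finite index contained in $K \cap \bigcap_{h \in H_0} C_G(h)$). Once this index bookkeeping is done, the remaining steps are essentially formal.
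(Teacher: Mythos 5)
Your proposal is correct and rests on the same underlying construction as the paper: the finite $F$-$H$-biset you build as $(F\times H)\times_{F_K\times H_0}X_j$ is exactly the paper's $Y=(F\times X\times H)/\sim$, and your induction formula for its permutation character is exactly the paper's fixed-point count. Where you differ is in organization and in one technical point. The paper first replaces $K$ by $\bigcap_{h\in H}hKh^{-1}$ to force $K$ to be normalized by $H$, so that conjugation by the $b_j$ makes $[K:C_K(h_j)]$ and $[C_G(h_j):C_K(h_j)]$ independent of $j$, and then runs an $\varepsilon$-estimate on $|\fix_Y(f,h)|$ directly; you instead isolate the clean character identity $i_G=\Ind_{K\times H_0}^{G\times H}(\dot\imath_K)$, which you verify without any normality assumption on $K$ by using
\[
\frac{[C_G(h''):C_K(h'')]}{[K:C_K(h'')]}=\frac{[G:K]}{[G:C_G(h'')]}
\qquad\text{(multiplicativity of index through }C_K(h'')=C_G(h'')\cap K),
\]
and then feed the approximating permutation characters through the finite-index induction functor, using that induction commutes with pointwise limits and with inflation. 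Your route is in fact preferable: the paper's reduction to $H$-normalized $K$ tacitly requires that relative soficity of $K$ with respect to $H_0$ descend to the finite-index subgroup $\bigcap_h hKh^{-1}$, and this descent is nowhere established (in the paper's two applications of the lemma inside Proposition~\ref{prop:criterionRelativeSoficity} the relevant $K$ happens to equal $NH_0$ or $NH$ and is automatically $H$-normalized, so those applications are unaffected, but the lemma as stated is not quite covered by the paper's own argument). Your version proves the lemma as stated. The one thing left implicit in your write-up, and worth spelling out, is the computation showing that for fixed $h'H_0$ the number of cosets $g'K$ with $g'^{-1}gg'\sim_K h'':=h'^{-1}hh'$ equals $[C_G(h''):C_K(h'')]$; this follows by observing that $\{x\in G:x^{-1}gx=h''\}=aC_G(h'')$ for any one such $a$, and that its image in $G/K$ has exactly $[C_G(h''):C_K(h'')]$ elements.
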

\begin{proof}
 Note that $H_0 \normal H$ is a normal subgroup of $H$.
 Intersecting the finitely many $H$-conjugates of $K$, we may assume that $K$ is normalized by $H$.
 Pick a surjection $\pi\colon F \to G$ from a free group $F$.
 The inverse image $\pi^{-1}(K)$ will be denoted by $\widetilde{F}$; it is again a free group. 
 Let $\gamma_1,\dots,\gamma_s$ be coset representatives for $F/\widetilde{F}$ and let $b_1,\dots,b_m$ be coset representatives of $H/H_0$.
 
 Let $\varepsilon > 0$ and $f_1,\dots,f_r \in F$ be given.
 We have to find a finite $F$-$H$-biset, whose character is $\varepsilon$-close to the character $\infl^{F\times H}_{G \times H}(i_G)$ at 
 the elements $\{f_1,\dots,f_r\} \times H$.
 By assumption there is a finite $\widetilde{F}$-$H_0$-biset $X$ such that 
 \begin{equation*}
    \left| \frac{|\fix_X(\gamma^{-1}_i f_j \gamma_i,h)|}{|X|} - i_K(\pi(\gamma^{-1}_i f_j \gamma_i),h)\right| < \varepsilon.
 \end{equation*}
 for all $h\in H_0$, all $i\in\{1,\dots,s\}$ and all $j\in\{1,\dots,r\}$ with $\gamma_i^{-1}f_j \gamma_i \in \widetilde{F}$.
  Consider the set $Y = (F\times X \times H)/\sim$ where the equivalence relation $(g_1,x_1,h_1) \sim (g_2,x_2,h_2)$  holds if there are $g\in \widetilde{F}$ and $t\in H_0$ such that
  $g_1g^{-1} = g_2$, $gx_1t^{-1}=x_2$ and $th_1 = h_2$. Then $Y$ is carries a left action by $F$ and a right action by $H$. Every element of $Y$ has a unique representative of the form
  $(\gamma_i, x, b_j)$ and thus $|Y| = |F/\widetilde{F}|\cdot|H/H_0|\cdot|X|$.
  We will verify that the character of $Y$ is $\varepsilon$-close to $\infl(i_G)$ at all elements $(f_j,h)$.
 
 Fix $f \in \{f_1,\dots, f_r\}$ and $h \in H$. The point $y = (\gamma_i,x,b_j)$ is an $(f,h)$-fixed point if and only if
 1) $\gamma_i^{-1}f\gamma_i \in \widetilde{F}$, 2) $h\in H_0$ and 3) $(\gamma_i^{-1}f\gamma_i) \cdot x \cdot (b_jh^{-1}b_j^{-1}) = x$.
 If $h$ is not in $H_0$, then $i_G(\pi(f),h) = 0$ and there is nothing to show.
 Now assume that $h \in H_0$, then we obtain the formula
 \begin{align*}
     |\fix_Y(f,h)| =& \sum_{\substack{i=1 \\ \gamma_i^{-1}f\gamma_i \in \widetilde{F}}}^s \sum_{j=1}^m |\fix_X(\gamma_i^{-1}f\gamma_i,b_jhb_j^{-1})|\\
                   =& \sum_{\substack{i=1 \\ \gamma_i^{-1}f\gamma_i \in \widetilde{F}}}^s \sum_{j=1}^m |X| \: i_K(\pi(\gamma_i^{-1}f\gamma_i),b_jhb_j^{-1}) + O(|Y|\varepsilon)
 \end{align*}
 with an error term bounded by $|Y| \varepsilon$.
 The term $i_K(\pi(\gamma_i^{-1}f\gamma_i),b_jhb_j^{-1})$ vanishes if $\pi(\gamma_i^{-1}f\gamma_i)$ is not conjugate to $h_j = b_jhb_j^{-1}$ in $K$.
 Therefore we can restrict the summation to the set of indices
 $I_f(h_j) =\{\:i \in\{1,\dots,s\}\:|\: \pi(\gamma_i^{-1} f \gamma_i) \sim_K h_j \:\}$.
 Note that $F/\widetilde{F} \cong G/K$.  This shows that the set $I_f(h_j)$ is either empty  (if $\pi(f)$ is not conjugate to $h_j$ in $G$) or otherwise has exactly $[C_G(h_j) : C_K(h_j)]$ elements.
 Conjugation with $b_j$ defines an automorphism of $G$ which preserves $K$. We deduce that $[C_G(h_j) : C_K(h_j)]$, $[K:C_K(h_j)]$ and  
  $|I_f(h_j)|$ are independent of $j$.
 We obtain
 \begin{equation*}
     \frac{|\fix_Y(f,h)|}{|Y|} = \frac{1}{m s}\sum_{j=1}^m\sum_{i \in I_f(h_j)} \: [K: C_K(h)]^{-1} + O(\varepsilon) = i_G(\pi(f),h) + O(\varepsilon).
 \end{equation*}
 with an error term which is bounded by $\varepsilon$.
\end{proof}

\begin{proof}[Proof of Proposition \ref{prop:criterionRelativeSoficity}]
 Take a finite index normal subgroup $N$ such that $N \cap H = \{1\}$.
 By Lemma \ref{lem:reduceToCentral} we may assume that every element $h\in H$ has a finite index centralizer in $G$; we may further assume that 
 every element of $N$ centralizes $H$ by possibly choosing $N$ smaller. Note that the group $K = NH$ is isomorphic to the direct product $N \times H$. Another application of Lemma \ref{lem:reduceToCentral}
 shows that it suffices to verify that $K$ is sofic relative to $H$.
 
 It remains to show that a direct product $K = N \times H$ of a sofic group $N$ and a finite group $H$ is sofic relative to $H$.
 Let $\pi\colon F \to N$ be an epimorphism from a free group $F$ and let $\phi = \infl^F_N(\delta^{(2)}_N)$ be the inflation of the regular character.
 By Proposition \ref{prop:CharacterSofic} there is a net of permutation characters $\phi_j \in \Perm(F)$ which converges to~$\phi$.
 The character $i_H\in \Ch(H\times H)$ (defined as in \eqref{eq:i-function} with $i_H(h_1,h_2) = [H:C_H(h_2)]^{-1}$ if $h_1$ and $h_2$ are conjugate) 
 is the permutation character of the action of $H\times H$ on $H$ via $(h_1,h_2)\cdot x = h_1 x h^{-1}_2$.
 Observe that $[K:C_K(h)] = [H:C_H(h)]$ for all $h\in H$ and consequently
 \begin{equation*}
       i_K((n,h_1),h_2) = \delta_N^{(2)}(n)i_H(h_1,h_2)
 \end{equation*}
 for all $n \in N$ and $h_1,h_2 \in H$.
 This implies that the net of permutation characters $\phi_j \cdot i_H \in \Ch(F \times H \times H)$ converges to $\infl^{F\times H \times H}_{K\times H}(i_K) = \phi \cdot i_H$.
 We deduce that $K = N \times H$ is sofic relative to $H$.
\end{proof}

\section{Main results and applications}\label{sec:mainResults}

In this final section we will put everything together to prove our main results and give some applications.
We begin with the definition of the central notion: the $L^2$-multiplicity.
Let $G$ be a group and let $H \leq G$ be a finite subgroup and let
 \begin{equation*}
     Q_{\bullet}\colon \dots \longrightarrow Q_{n+1} \stackrel{\partial_{n+1}}{\longrightarrow} Q_{n} \stackrel{\partial_{n}}{\longrightarrow} Q_{n-1} \longrightarrow \dots
 \end{equation*}
 be a chain complex of f.g.\ projective $\bbC[G]$-modules.

\begin{definition}\label{def:L2-multiplicity}
 Let $\chi \in \Irr(H)$ and $p \in \bbZ$.
 The \emph{$p$-th $L^2$-multiplicity of $\chi$ in $Q$} is
 \begin{equation*}
      m^{(2)}_p(\chi, Q_{\bullet}) := \frac{\chi(1)}{|H|} \: b^\phi_p( Q_{\bullet} )
 \end{equation*}
 where $b^\phi_p( Q_{\bullet} )$ is the $p$-th Betti number (see Section~\ref{sec:BettiNumbers}) w.r.t.\ $\phi = \ind_H^G(\widetilde{\chi})$
 and  $\widetilde{\chi} = \frac{\chi}{\chi(1)}$ is the normalized character associated to $\chi$.
\end{definition}

\subsection{$L^2$-multiplicities for finite groups}
We first discuss the $L^2$-multi\-plicities in the case of finite groups and show that they are the normalized ordinary multiplicities.
Before that, we recall how the ordinary multiplicities are defined.

Let $G$ be a finite group.
For finite dimensional right $\bbC[G]$-modules $W$ and $U$,
the \emph{multiplicity of $W$ in $U$}
is
\begin{equation*}
   m(W, U) = \dim_\bbC \Hom_{\bbC[G]}(U,W).
\end{equation*}
It is useful to keep in mind that $m(U,W) = m(W,U)$.
If $\vartheta$ is the ordinary character afforded by $W$, then we write $m(\vartheta,U) := m(W,U)$.

\medskip

Let $\phi \in \Ch(G)$ be a character and
let $A \in M_{m,n}(\bbC[G])$. First of all, we need to understand what $\rnk_\phi(A)$ and $\nulli_\phi(A)$ are.
It is sufficient to consider the rank (by \eqref{eq:dim-formula}).
By Corollary \ref{cor:PropertiesOfRank} the rank and nullity are affine in the character $\phi$. Further, since $\Ch(G)$ is the simplex over
the normalized irreducible characters, we may assume that $\phi = \widetilde{\vartheta}$ with $\vartheta \in \Irr(G)$.

Let $(V,\rho)$ be an irreducible representation which affords the character $\vartheta$. As we have seen in Section \ref{sec:finitedimensional}
the tracial Hilbert $G$-bimodule $\calH_\phi$ is $\End_\bbC(V)$ with the left and right multiplication by the elements of $\rho(G)$.
The inner product is given by $\langle a , b \rangle = \frac{1}{\vartheta(1)}\Tr(b^*a)$ and the trace-element is $\id_V$.

Consider the left multiplication map $l_A\colon \bbC[G]^n \to \bbC[G]^m$. Every $\bbC[G]$-module is projective and we obtain 
a decomposition $\bbC[G]^m = \im(l_A) \oplus C$. Therefore the image of $l^\phi_A \colon \calH_\phi^n \to \calH^m_\phi$
is isomorphic to $\im(l_A) \otimes_{\bbC[G]} \calH_\phi$.
It suffices to understand the dimension $\dim_{\calN_\phi}(W \otimes_{\bbC[G]} \calH_\phi)$ for any finite dimensional right $\bbC[G]$-module $W$.
The additivity of the dimension allows us to restrict to the case where $W$ is irreducible.

Under these assumption we have $W \otimes_{\bbC[G]} \calH_\phi$ is isomorphic to $W$ or is trivial depending on whether or not $W$ has character $\vartheta$.
Assume that $W$ has character $\vartheta$
and identify $\calH_\phi$ with a matrix algebra $M_d(\bbC)$ by choosing a basis of $V$ (where $d = \vartheta(1)$).
Then $W$ is isomorphic to the submodule given by the first row and the orthogonal projection is given by the matrix 
\begin{equation*}
   P = \begin{pmatrix}
           1 &  &  &  &\\
            & 0 &  &  &\\
            &  & \ddots &\\
            & & & 0\\
       \end{pmatrix}.
\end{equation*}
We conclude that $\dim_{\calN_\phi}(W \otimes \calH_\phi) = \frac{1}{\vartheta(1)} \Tr(P) = \frac{1}{\vartheta(1)}$.

Finally, putting these observations together we deduce that the rank $\rnk_\phi(A)$ equals $\frac{1}{\vartheta(1)} m(\vartheta, \im(l_A))$.
The following lemma summarizes this discussion.
\begin{lemma}\label{lem:multiplicitiesFiniteGroup}
 Let $\widetilde{\vartheta} = \frac{\vartheta}{\vartheta(1)}$ be the normalization of an ordinary character.
 For every chain complex $Q_{\bullet}$ of finite dimensional right $\bbC[G]$-modules we have
 \begin{equation*}
    b^{\widetilde{\vartheta}}_p(Q_{\bullet}) = \frac{1}{\vartheta(1)} \: m\left(\vartheta,H_p(Q_{\bullet})\right). 
 \end{equation*}
\end{lemma}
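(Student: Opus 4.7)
The plan is to reduce the statement to a computation for each boundary map separately and then invoke the computation already laid out in the paragraph preceding the lemma. By Definition \ref{def:phi-Bettinumber},
\[
b^{\widetilde\vartheta}_p(Q_\bullet) = \nulli_{\widetilde\vartheta}(\partial_p) - \rnk_{\widetilde\vartheta}(\partial_{p+1}),
\]
so it suffices to establish
\[
\rnk_{\widetilde\vartheta}(\partial_{p+1}) = \tfrac{1}{\vartheta(1)} m(\vartheta, \im(\partial_{p+1})) \quad \text{and} \quad \nulli_{\widetilde\vartheta}(\partial_p) = \tfrac{1}{\vartheta(1)} m(\vartheta, \ker(\partial_p))
\]
and then combine these via the short exact sequence $0 \to \im(\partial_{p+1}) \to \ker(\partial_p) \to H_p(Q_\bullet) \to 0$. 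Note that by Maschke's theorem $\bbC[G]$ is semisimple, so each $Q_i$ is automatically a f.g.\ projective right $\bbC[G]$-module and the definitions apply.

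The first identity is precisely what the paragraph preceding the lemma establishes for matrices, and Lemma \ref{lem:existenceOfMatrices} extends it to homomorphisms of f.g.\ projective modules. For the nullity I would argue symmetrically: Maschke gives a splitting $Q_p = \ker(\partial_p) \oplus C$ of right $\bbC[G]$-modules, and tensoring with $\calH_{\widetilde\vartheta}$ over $\bbC[G]$ respects direct sums, so the kernel of $\partial_p \otimes \id$ on $Q_p \otimes_{\bbC[G]} \calH_{\widetilde\vartheta}$ equals $\ker(\partial_p) \otimes_{\bbC[G]} \calH_{\widetilde\vartheta}$. Decomposing $\ker(\partial_p)$ into its irreducible constituents and applying the computation recalled in the preceding paragraph (namely that $\dim_{\calN_{\widetilde\vartheta}}(W \otimes_{\bbC[G]} \calH_{\widetilde\vartheta})$ equals $\frac{1}{\vartheta(1)}$ when $W$ is irreducible of character $\vartheta$ and vanishes otherwise) yields the second identity.

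To assemble the two formulas, observe that the short exact sequence above splits by semisimplicity, and multiplicities are additive on direct sums of $\bbC[G]$-modules, hence
\[
m(\vartheta, \ker(\partial_p)) = m(\vartheta, \im(\partial_{p+1})) + m(\vartheta, H_p(Q_\bullet)).
\]
Substituting into the formula for $b^{\widetilde\vartheta}_p(Q_\bullet)$ produces the claim. There is no genuine obstacle here: every ingredient has been assembled in the preceding discussion, and the only role of $G$ being finite is to supply the two semisimple splittings (of $Q_p$ and of the homology sequence) that allow kernel and image to be treated by the same algebraic formula.
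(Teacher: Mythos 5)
Your proposal is correct and follows essentially the same route as the paper; the paper simply leaves the argument implicit by stating that the lemma ``summarizes this discussion,'' and you are spelling out exactly how the rank and nullity computations from the preceding paragraph combine (via the semisimple splitting of $0 \to \im(\partial_{p+1}) \to \ker(\partial_p) \to H_p(Q_\bullet) \to 0$ and additivity of multiplicities) to give the Betti-number formula.

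One cosmetic remark: invoking Lemma \ref{lem:existenceOfMatrices} for the passage from matrices to module homomorphisms is not really needed here, and is slightly indirect, since you would then also have to track that the matrix $A$ produced there has $\im(l_A) \cong \im(f) \oplus 0$. Since $G$ is finite, Maschke's theorem already gives $Q_2 = \im(f) \oplus C$ directly, so the identification $\overline{\im(f_\phi)} \cong \im(f) \otimes_{\bbC[G]} \calH_\phi$ and the subsequent multiplicity count apply verbatim to any homomorphism of finite dimensional $\bbC[G]$-modules; the matrix reduction is superfluous in the semisimple setting.
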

\begin{corollary}\label{cor:FiniteGroupMultiplicity}
 Let $G$ be a finite group and let  $H$ be a subgroup.
 For every irreducible character $\chi \in \Irr(H)$ the $L^2$-multiplicity is
  \begin{equation*}
    m^{(2)}_p(\chi, Q_{\bullet}) = \frac{m\left(\chi,H_p(Q_{\bullet})_{|H}\right)}{|G|}. 
 \end{equation*}
\end{corollary}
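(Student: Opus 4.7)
The plan is to unwind the definition of $m^{(2)}_p(\chi,Q_\bullet)$ via Lemma \ref{lem:multiplicitiesFiniteGroup}, the explicit formula for $\ind_H^G$, and ordinary Frobenius reciprocity.

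The first step is to identify the induced character $\phi = \ind_H^G(\widetilde{\chi})$ with the normalization of the classical induced character. Using the definition of $i_G$ in \eqref{eq:i-function} together with the orbit-counting identity $[G:C_G(h)]^{-1} = |C_G(h)|/|G|$ and comparing with the formula \eqref{eq:c-function} for $c_G$, one sees
\begin{equation*}
  \ind_H^G(\widetilde{\chi})(g) \;=\; \sum_{h\in H} i_G(g,h)\widetilde{\chi}(h) \;=\; \frac{1}{|G|}\sum_{h\in H} c_G(g,h)\widetilde{\chi}(h).
\end{equation*}
Comparing with the standard formula $\Ind_H^G(\chi)(g) = \frac{1}{|H|}\sum_{h\in H} c_G(g,h)\chi(h)$ for ordinary induction, and noting that $\Ind_H^G(\chi)(1) = [G:H]\chi(1)$, one gets $\phi = \widetilde{\vartheta}$, where $\vartheta := \Ind_H^G(\chi)$. (The factors balance exactly.)

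In the second step I apply Lemma \ref{lem:multiplicitiesFiniteGroup} to this $\vartheta$:
\begin{equation*}
   b^\phi_p(Q_\bullet) \;=\; \frac{1}{\vartheta(1)}\, m(\vartheta, H_p(Q_\bullet)) \;=\; \frac{1}{[G:H]\,\chi(1)}\, m(\Ind_H^G(\chi), H_p(Q_\bullet)).
\end{equation*}
Ordinary Frobenius reciprocity for finite groups (an instance of Lemma \ref{lem:FrobeniusReciprocity} for $M = \bbC[G]$) gives $m(\Ind_H^G(\chi), H_p(Q_\bullet)) = m(\chi, H_p(Q_\bullet)_{|H})$.

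Substituting into Definition \ref{def:L2-multiplicity} yields
\begin{equation*}
   m^{(2)}_p(\chi,Q_\bullet) \;=\; \frac{\chi(1)}{|H|}\cdot\frac{m(\chi, H_p(Q_\bullet)_{|H})}{[G:H]\,\chi(1)} \;=\; \frac{m(\chi, H_p(Q_\bullet)_{|H})}{|G|},
\end{equation*}
which is the assertion. The only substantive point is the first step, i.e.\ the verification that the combinatorial normalization built into $i_G$ turns $\ind_H^G(\widetilde{\chi})$ into the normalization of the classical induced character; everything else is a direct application of the cited results.
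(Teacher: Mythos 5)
Your proposal is correct and follows essentially the same route as the paper: identify $\ind_H^G(\widetilde{\chi})$ with the normalization of the classical $\Ind_H^G(\chi)$ (the paper asserts this, you verify it via the $i_G$ versus $c_G$ comparison), then apply Lemma \ref{lem:multiplicitiesFiniteGroup} and Frobenius reciprocity, and finally insert into Definition \ref{def:L2-multiplicity}. The extra detail in your first step is a welcome fleshing-out of a line the paper leaves implicit, not a different argument.
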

\begin{proof}
 The character $\ind_H^G(\widetilde{\chi})$ is the normalization of the ordinary induced character $\vartheta = \Ind_H^G(\chi)$.
 Note that $\vartheta(1) = \chi(1) [G:H]$.
 We apply Lemma \ref{lem:multiplicitiesFiniteGroup} and Frobenius reciprocity (Lemma \ref{lem:FrobeniusReciprocity}) 
 to obtain
 \begin{equation*}
   b^{\widetilde{\vartheta}}_p(Q_{\bullet}) = \frac{1}{\vartheta(1)} \:m\left(\vartheta,H_p(Q_{\bullet})\right) = \frac{m\left(\chi,H_p(Q_{\bullet})_{|H}\right)}{\chi(1)[G:H]}.
 \end{equation*}
\end{proof}

\subsection{$L^2$-multiplicities and $L^2$-Betti numbers}
Let $G$ be a group and let $H \leq G$ be a finite subgroup and let $Q_\bullet$
be a chain complex of f.g.\ projective $\bbC[G]$-modules.

\begin{lemma}
 The $p$-th $L^2$-multiplicities and the $p$-th $L^2$-Betti number of $Q_\bullet$ satisfy the relation
 \begin{equation*}
     \sum_{\chi \in \Irr(H)} \chi(1) \: m_p^{(2)}(\chi,Q_\bullet) = b^{(2)}_p(Q_\bullet).
 \end{equation*}
\end{lemma}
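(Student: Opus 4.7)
The plan is to unfold Definition \ref{def:L2-multiplicity}, combine the Betti numbers on the left-hand side into a single Betti number using affineness, and then identify the resulting character as the regular character $\delta^{(2)}_G$ using orthogonality of irreducible characters of the finite group $H$.

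First I would rewrite the left-hand side as
\begin{equation*}
  \sum_{\chi \in \Irr(H)} \chi(1)\, m_p^{(2)}(\chi, Q_\bullet) = \sum_{\chi \in \Irr(H)} \frac{\chi(1)^2}{|H|}\, b_p^{\phi_\chi}(Q_\bullet)
\end{equation*}
where $\phi_\chi = \ind_H^G(\widetilde{\chi})$. By Corollary \ref{cor:PropertiesOfRank}\eqref{item:affine} the Betti numbers $\phi \mapsto b_p^\phi(Q_\bullet)$ are affine in $\phi$ (as a difference of an affine nullity and an affine rank, applied via Lemma \ref{lem:existenceOfMatrices} to represent the boundary maps by matrices). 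Since $\sum_\chi \chi(1)^2 = |H|$, the coefficients $\chi(1)^2/|H|$ form a convex combination, so the sum collapses to $b_p^\psi(Q_\bullet)$ with
\begin{equation*}
  \psi \;=\; \sum_{\chi \in \Irr(H)} \frac{\chi(1)^2}{|H|}\, \ind_H^G(\widetilde{\chi}).
\end{equation*}

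Next I would use the explicit formula for the induced character. Since $\sum_h i_G(1,h)\widetilde{\chi}(h) = \widetilde{\chi}(1) = 1$, the normalization in \eqref{eq:induced} is trivial and $\ind_H^G(\widetilde{\chi})(g) = \sum_{h \in H} i_G(g,h)\widetilde{\chi}(h)$. Substituting and using $\widetilde{\chi} = \chi/\chi(1)$, this yields
\begin{equation*}
  \psi(g) \;=\; \frac{1}{|H|} \sum_{h\in H} i_G(g,h) \left( \sum_{\chi \in \Irr(H)} \chi(1)\, \chi(h) \right).
\end{equation*}
The inner sum is the value at $h$ of the ordinary regular character of $H$, which equals $|H|$ if $h = 1_H$ and $0$ otherwise. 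Hence $\psi(g) = i_G(g,1) = \delta^{(2)}_G(g)$ (noting that $[G:C_G(1)] = 1$ and $g$ is conjugate to $1$ only if $g = 1_G$). Therefore $b_p^\psi(Q_\bullet) = b_p^{\delta^{(2)}_G}(Q_\bullet) = b_p^{(2)}(Q_\bullet)$.

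There is no real obstacle here: the two ingredients are affineness of Betti numbers, already established in Corollary \ref{cor:PropertiesOfRank}, and the column orthogonality $\sum_\chi \chi(1)\chi(h) = |H|\delta_{h,1}$ from finite-group representation theory. If one wanted to be careful, the only point to double-check is that the convex combination formula for $\psi$ extends correctly through the affine structure --- but this is exactly the content of Corollary \ref{cor:PropertiesOfRank}\eqref{item:affine} applied separately to the rank and the nullity of the boundary operators (lifted to matrices by Lemma \ref{lem:existenceOfMatrices}).
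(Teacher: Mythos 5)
Your proof is correct and follows essentially the same route as the paper: unfold the definition, use affineness of $b_p^\phi$ in $\phi$ (Corollary \ref{cor:PropertiesOfRank}\eqref{item:affine}) to collapse the sum to a single character $\psi$, and then verify the character identity $\psi = \delta^{(2)}_G$. The paper expresses the inner step as $\ind_H^G(\delta_H^{(2)}) = \delta_G^{(2)}$ where you invoke column orthogonality directly, but this is the same computation.
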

\begin{proof}
 We have the following identity of characters 
 \begin{equation*}
    \sum_{\chi \in \Irr(H)} \frac{\chi(1)^2}{|H|} \:  \ind_H^G(\widetilde{\chi}) =
    \sum_{h\in H} i_G( \cdot ,h) \sum_{\chi \in \Irr(H)} \frac{\chi(1)}{|H|} \chi(h) = \ind_H^G(\delta_H^{(2)}) = \delta_G^{(2)}.
 \end{equation*}
 The Betti numbers $b_p^\phi(Q_\bullet)$ are affine in the parameter $\phi$ (see Corollary \ref{cor:PropertiesOfRank} \eqref{item:affine}) and so the
 following calculation verifies the assertion
 \begin{equation*}
     \sum_{\chi \in \Irr(H)} \chi(1) \: m_p^{(2)}(\chi,Q_\bullet) =  \sum_{\chi \in \Irr(H)}\frac{\chi(1)^2}{|H|} \: b^{\ind^G_H(\widetilde{\chi})}_p( Q_{\bullet} )= b^{(2)}_p(Q_\bullet).
 \end{equation*}
\end{proof}
 Now we prove Proposition \ref{prop:centralizer} stated in the introduction; this is our main tool to calculate the multiplicities.
\begin{proposition}\label{prop:centralizer}
  Assume that every non-trivial element $h\in H$ has a centralizer $C_G(h)$ of infinite index in $G$.
  Then
  \begin{equation*}
       m_p^{(2)}(\chi, Q_\bullet) = \frac{\chi(1)}{|H|} b^{(2)}_p(Q_\bullet).
  \end{equation*}
\end{proposition}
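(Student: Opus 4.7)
The plan is to reduce the identity to a computation of the induced character $\ind_H^G(\widetilde{\chi})$ under the centralizer hypothesis, and then observe that this induced character is nothing but the regular character $\delta_G^{(2)}$.

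First I would unwind Definition \ref{def:L2-multiplicity}: by definition, $m_p^{(2)}(\chi,Q_\bullet) = \frac{\chi(1)}{|H|}\, b^\phi_p(Q_\bullet)$ with $\phi = \ind_H^G(\widetilde{\chi})$. So it suffices to establish the single identity $b^\phi_p(Q_\bullet) = b^{(2)}_p(Q_\bullet)$, and for this in turn it is enough to show that $\phi = \delta_G^{(2)}$ pointwise on $G$.

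Next I would analyze the character $i_G\in\Ch(G\times H)$ appearing in the induction formula. By \eqref{eq:i-function}, $i_G(g,h) = [G:C_G(h)]^{-1}$ if $g$ and $h$ are conjugate in $G$ and zero otherwise, with the convention that an infinite index contributes $0$. For $h = 1$, the only element conjugate to $1$ is $1$ itself, and $[G:C_G(1)] = 1$, so $i_G(g,1) = \delta_G^{(2)}(g)$. For $h \in H\setminus\{1\}$, the hypothesis $[G:C_G(h)] = \infty$ forces $i_G(g,h) = 0$ for every $g \in G$. Plugging into the defining formula for $\ind_H^G$ (which does not require the normalization in \eqref{eq:induced} since $\sum_h i_G(1,h)\widetilde{\chi}(h) = 1$), we get
\begin{equation*}
  \ind_H^G(\widetilde{\chi})(g) = \sum_{h \in H} i_G(g,h)\,\widetilde{\chi}(h) = i_G(g,1)\,\widetilde{\chi}(1) = \delta_G^{(2)}(g).
\end{equation*}

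Finally, since $\phi = \delta_G^{(2)}$, we have $b^\phi_p(Q_\bullet) = b^{\delta_G^{(2)}}_p(Q_\bullet) = b^{(2)}_p(Q_\bullet)$ by the very definition of $L^2$-Betti numbers, and multiplying by $\chi(1)/|H|$ gives the claim. There is no real obstacle here; the whole content of the proposition is the observation that the centralizer hypothesis collapses $i_G$ onto the diagonal entry at $h=1$, making the induced character equal to the regular one.
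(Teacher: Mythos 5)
Your argument is correct and is essentially the paper's own proof: both observe that the centralizer hypothesis kills $i_G(g,h)$ for all $h\neq 1$, so that $\ind_H^G(\widetilde\chi)=\delta_G^{(2)}$, and then unwind Definition \ref{def:L2-multiplicity}. No meaningful difference in route.
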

\begin{proof}
  Indeed, under the assumptions $i_G(g,h) = 0$ unless $g=1$ and $h=1$. Consequently,
  \begin{equation*}
     \ind_H^G(\widetilde{\chi})(g) = \sum_{h\in H} i_G(g,h) \widetilde{\chi}(h) = \delta^{(2)}_G(g)
  \end{equation*}
   for all $g\in G$. This character identity immediately finishes the proof.
  \end{proof}

\subsection{The sofic approximation theorem}
We will use the notion of $G$-CW-complexes as in the survey \cite[Sect.~1.2.1]{LuckBook} with the warning that here the group $G$ acts from the right.
To be more precise, a $G$-CW-complex is a CW-complex $X$ with a cellular \emph{right} action of $G$ such that
for every open cell $e \subseteq X$ if $e\cdot g \cap e \neq \emptyset$, then $g$ acts trivially on $e$.
We say that $X$ is \emph{proper}, if every stabilizer is a finite subgroup of $G$.

Let $X$ be a proper $G$-CW-complex. The rational cellular chain complex of $X$ will be denoted by $C_\bullet(X)$.
Every $C_p(X)$ is isomorphic to a direct sum
\begin{equation*}
    C_p(X) = \bigoplus_{i\in I_p} \bbQ[S_i\backslash G]
\end{equation*}
where $I_p$ a set of representatives of the $G$-orbits on $p$-cells and $S_i$ is the \emph{finite} stabilizer.
Suppose that $X/G$ is of finite type, i.e.\ has only finitely many cells of each dimension, then Lemma \ref{lem:finitestabilizersProjective}
shows that $C_\bullet(X)$ is a complex of finitely generated projective $\bbQ[G]$-modules.

Let $H \leq G$ be a finite subgroup and let $\chi\in \Irr(H)$ be an irreducible character.
Let $p$-th $L^2$-multiplicity of $\chi$ in $X$ is defined to be
\begin{equation*}
    m^{(2)}_p(\chi, X; G) :=  m^{(2)}_p(\chi, C_\bullet(X)).
\end{equation*}
\begin{theorem}\label{thm:eqSofic}
 Let $G$ be a group and let $H$ be a finite subgroup. 
 Suppose that $X$ is a proper $G$-CW-complex such that $X/G$ is of finite type.
 Let $\Gamma_1 \supset \Gamma_2 \supset \Gamma_3 \supset \dots$ be a decreasing sequence normal subgroups of $G$ such that
 $\bigcap_{n\in \bbN} \Gamma_n = \{1\}$. We can assume that $\Gamma_n \cap H = \{1\}$ for all $n$.
 
 If $G/\Gamma_n$ is sofic relative to $H\Gamma_n/\Gamma_n$ for all sufficiently large $n$, then the $p$-th $L^2$-multiplicities satisfy
 \begin{equation*}
    \lim_{n\to \infty} m^{(2)}_p(\chi, X/\Gamma_n; G/\Gamma_n) = m^{(2)}_p(\chi, X; G)
 \end{equation*}
 for every $\chi\in \Irr(H)$.
\end{theorem}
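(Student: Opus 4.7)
The plan is to reduce the convergence of $L^2$-multiplicities to the approximation property of a convergent sequence of arithmetically hyperlinear characters, and then apply Theorem \ref{thm:approxArith}.

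First, I would unwind the definitions. Since $\Gamma_n \cap H = \{1\}$, the projection identifies $H$ with $H_n := H\Gamma_n/\Gamma_n$, so we may think of $\chi$ simultaneously as a character of $H$ and of $H_n$ (with $|H_n|=|H|$). Set
\begin{equation*}
\phi := \ind_H^G(\widetilde{\chi}) \in \Ch(G), \qquad \phi_n := \ind_{H_n}^{G/\Gamma_n}(\widetilde{\chi}) \in \Ch(G/\Gamma_n),
\end{equation*}
and let $\psi_n := \infl^G_{G/\Gamma_n}(\phi_n) \in \Ch(G)$. The cellular chain complex satisfies $C_\bullet(X/\Gamma_n) \cong C_\bullet(X)\otimes_{\bbQ[G]}\bbQ[G/\Gamma_n]$, and every boundary in $C_\bullet(X)$ can be represented by a matrix over $\bbQ[G]$ by Lemma \ref{lem:existenceOfMatrices}. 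Using the transformation formula in Corollary \ref{cor:PropertiesOfRank}\,\eqref{item:transformation} applied to the quotient homomorphism $G \to G/\Gamma_n$, the nullity and rank of a boundary matrix $A$ over $\bbQ[G]$ with respect to $\psi_n$ equal those of its image over $\bbQ[G/\Gamma_n]$ with respect to $\phi_n$. Consequently
\begin{equation*}
b_p^{\psi_n}(C_\bullet(X)) = b_p^{\phi_n}(C_\bullet(X/\Gamma_n)),
\end{equation*}
so the claim reduces to showing $b_p^{\psi_n}(C_\bullet(X)) \to b_p^{\phi}(C_\bullet(X))$ as $n\to\infty$.

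Next I would verify that the net $(\psi_n)$ converges pointwise to $\phi$. By the formula defining $\ind$ via $i_{G/\Gamma_n}$, this is precisely Lemma \ref{lem:convergenceInducedCharacters} applied to the chain $(\Gamma_n)$ (the hypothesis $\Gamma_n \cap H = \{1\}$ holds by assumption), noting that inflation and induction commute in the stated way.

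The core input is then the arithmetic hyperlinearity of each $\psi_n$ with a bound on the degree independent of $n$. By hypothesis, $i_{G/\Gamma_n} \in \Ch(G/\Gamma_n \times H_n)$ is sofic relative to $H_n$, hence arithmetically hyperlinear relative to $H_n$ of degree at most $1$. By Theorem \ref{thm:inductionViaArithmeticallyHyperlinear} the induced character $\phi_n = \ind_{i_{G/\Gamma_n}}(\widetilde{\chi})$ is arithmetically hyperlinear of degree at most $\varphi(|H_n|) = \varphi(|H|)$. Since arithmetic hyperlinearity (with the same degree bound) is preserved under pullback, $\psi_n = \infl^G_{G/\Gamma_n}(\phi_n)$ is arithmetically hyperlinear of degree at most $d := \varphi(|H|)$ as well. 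Thus $(\psi_n)$ is a convergent net of arithmetically hyperlinear characters of uniformly bounded degree, and Theorem \ref{thm:approxArith} yields the approximation property with respect to every matrix over $\bbQ[G]$, hence for each boundary operator of $C_\bullet(X)$, giving $b_p^{\psi_n}(C_\bullet(X)) \to b_p^{\phi}(C_\bullet(X))$. Multiplying by $\chi(1)/|H|$ and using Definition~\ref{def:L2-multiplicity} completes the proof.

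The main obstacle is the bookkeeping in the first step: one has to ensure that Lemma \ref{lem:existenceOfMatrices} converts the chain complex of projective, non-free modules $C_\bullet(X)$ into matrices cleanly, and that the identification $H_n \cong H$ makes the normalization factors $\chi(1)/|H_n|$ and $\chi(1)/|H|$ match, so the equality of Betti numbers translates into equality of $L^2$-multiplicities. Once that is in place, the argument is essentially an application of the two main ingredients of the paper: Lemma \ref{lem:convergenceInducedCharacters} (pointwise convergence) and Theorem \ref{thm:inductionViaArithmeticallyHyperlinear} combined with Theorem \ref{thm:approxArith} (the approximation property).
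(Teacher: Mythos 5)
Your proposal is correct and takes essentially the same route as the paper's own proof: reduce the quotient Betti numbers to $b_p^{\psi_n}(C_\bullet(X))$ via Lemma~\ref{lem:existenceOfMatrices} and the transformation formula in Corollary~\ref{cor:PropertiesOfRank}, use Lemma~\ref{lem:convergenceInducedCharacters} for pointwise convergence, and obtain the approximation property from Theorem~\ref{thm:inductionViaArithmeticallyHyperlinear} (applied to the sofic, hence arithmetically hyperlinear of degree $\le 1$, character $i_{G/\Gamma_n}$) together with Theorem~\ref{thm:approxArith}. The only difference from the paper is a harmless swap of the names $\phi_n$ and $\psi_n$.
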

\begin{proof}
  First we observe that $H\Gamma_n/\Gamma_n \cong H/H\cap\Gamma_n \cong H$ and we will tacitly identify $H$ with $H\Gamma_n/\Gamma_n$.
  Let $q_n$ denote the canonical quotient morphism $G \to G/\Gamma_n$.
  
  Consider the rational cellular chain complex 
  \begin{equation*}
       \cdots \longrightarrow C_{p+1}(X) \stackrel{\partial_{p+1}}{\longrightarrow} C_{p}(X) \stackrel{\partial_{p}}{\longrightarrow} C_{p-1}(X) \longrightarrow \cdots
  \end{equation*}
   We apply Lemma \ref{lem:existenceOfMatrices} to find matrices $A$ and $B$ over the rational group ring $\bbQ[G]$ such that
   $\rnk_\eta(A) = \rnk_\eta(\partial_{p+1})$ and $\nulli_\eta(B) = \nulli_\eta(\partial_p)$ for all $\eta \in \Ch(G)$.
   
   We introduce the following notation: $\psi_n = \ind_H^{G/\Gamma_n}(\widetilde{\chi})$, $\phi_n = q_n^*(\psi_n)$ and
   $\phi = \ind_H^{G}(\widetilde{\chi})$. In this notation we obtain
   \begin{align*}
      m_p^{(2)}(\chi,X/\Gamma_n;G/\Gamma_n) = &\frac{\chi(1)}{|H|} b^{\psi_n}(C_\bullet(X/\Gamma_n)) \\
      = &\frac{\chi(1)}{|H|} (\nulli_{\psi_n}(q_n(B)) - \rnk_{\psi_n}(q_n(A)))\\
      \stackrel{\text{Cor.\ref{cor:PropertiesOfRank}}}{=} &\frac{\chi(1)}{|H|} (\nulli_{\phi_n}(B) - \rnk_{\phi_n}(A)) = \frac{\chi(1)}{|H|} b^{\phi_n}(C_\bullet(X)).
   \end{align*}
   Note that $\lim_{n\to\infty} \phi_n = \phi$ by Lemma \ref{lem:convergenceInducedCharacters}. We deduce that 
   it suffices to show that the sequence $\phi_n$ has the approximation property with
   respect to all matrices over the rational group ring. We will verify that the characters $\phi_n$ are arithmetically hyperlinear of bounded degree,
   then the approximation property follows from Theorem \ref{thm:approxArith}.
   
   The assumption that $G/\Gamma_n$ is sofic relative to $H$ yields in particular that $i_G \in \Ch(G\times H)$ is arithmetically hyperlinear of degree at most $1$ relative to $H$.
   Theorem \ref{thm:inductionViaArithmeticallyHyperlinear} now shows that the characters $\psi_n = \ind_H^{G/\Gamma_n}(\widetilde{\chi})$ are arithmetically hyperlinear of 
   degree at most $\varphi(|H|)$.
\end{proof}
\begin{remark}
 Theorem \ref{thm:eqSofic} together with Corollary \ref{cor:FiniteGroupMultiplicity} provides the first proof of Theorem \ref{thm:approximation1} from the introduction.
\end{remark}

\subsection{A Farber approximation theorem for $L^2$-multiplicities}
 Let $X$ be a $G$-CW-complex such that $X/G$ is of finite type.
 In this section we discuss an approximation theorem for sequences $(\Gamma_n)_{n\in \bbN}$ 
 of finite index subgroups of $G$ which are not necessarily normal. Nevertheless, we need the finite group $H\leq G$ to act on 
 the quotient space $X/\Gamma_n$, hence we always assume that $H$ normalizes the groups~$\Gamma_n$.
 In this case $H$ acts not only on the quotient $X/\Gamma_n$ but, moreover,
 acts on the homology groups $H_p(X/\Gamma_n,\bbC)$. In particular, we can consider the multiplicities
 $m(\chi, H_p(X/\Gamma_n,\bbC))$ for every character $\chi \in \Irr(H)$.

 Our approach is inspired by the approximation theorem of Farber \cite{Farber1998} which is based on so-called \emph{Farber sequences}.
 A sequence $(\Gamma_n)_{n\in \bbN}$ of finite index subgroups of $G$ is called \emph{Farber},
 if the sequence of characters $\phi_n$ associated to the permutation representations of $G$ on $G/\Gamma_n$
 converges to the regular character $\delta_G^{(2)}$.
 One can make this explicit and obtains the following condition: A sequence $(\Gamma_n)_{n\in \bbN}$ is Farber if
 \begin{equation}\tag{Fa}\label{eq:Farber}
    \lim_{n\to \infty} \frac{\left|\{\:f \in G/\Gamma_n \:|\: g \in f\Gamma_nf^{-1} \:\}\right|}{[G:\Gamma_n]} = 0
 \end{equation}
 for all $g \in G$ with $g\neq 1$.
 Theorem 0.3 of Farber in \cite{Farber1998} generalizes L\"uck's approximation theorem to all Farber sequences of finite index subgroups.
 
 Here we will obtain a ``Farber''-version of the approximation theorem for $L^2$-multiplicities.
 However, it turns out the Farber condition is not sufficient for our purposes and we need to take into account the finite group $H$.
 \begin{definition}
   Let $G$ be a group and let $H$ be a finite subgroup.
   A sequence $(\Gamma_n)_{n\in \bbN}$ of finite index subgroups of $G$ is
   \emph{Farber relative to $H$} if $H$ normalizes each $\Gamma_n$ and
   the sequence of characters of the $G$-$H$-bisets $G/\Gamma_n$ converges to $i_G$. This means,
   \begin{equation}\tag{Fa/$H$}\label{eq:relFarber}
      \lim_{n\to\infty} \frac{\left|\{\:f \in G/\Gamma_n\:|\:f^{-1}gf \in h\Gamma_n\:\}\right|}{[G:\Gamma_n]} = i_G(g,h)
   \end{equation}
   for all $g\in G$ and $h\in H$.
 \end{definition}
  \begin{remark}
   Let $\Gamma_1 \supset \Gamma_2 \supset \Gamma_3 \supset \dots$ be a decreasing sequence of finite index \emph{normal} subgroups.
   If $\bigcap_{n\in\bbN}\Gamma_n = \{1\}$, then the sequence is Farber relative to any finite subgroup $H\leq G$.
   Indeed, this follows from the proof of Lemma \ref{lem:convergenceInducedCharacters}.
  \end{remark}
  
  It is an easy observation that every Farber sequence relative to $H$ satisfies the Farber condition \eqref{eq:Farber} (put $h=1$!).
  The converse however is not true even if every $\Gamma_n$ is normalized by $H$. 
  \begin{example}
     Let $D_\infty = \bbZ \rtimes \{\pm 1\}$ be the infinite dihedral group with the finite subgroup $H = 0 \rtimes \{\pm 1\}$.
     We consider the sequence of finite index subgroups $\Gamma_m = m\bbZ \rtimes \{\pm 1\}$.
     Clearly, every $\Gamma_m$ is normalized by $H$; however, the $\Gamma_m$ are not normal (unless $m=2$).
     The sequence is not Farber relative to $H$. Indeed, since $H\subseteq \Gamma_m$ for all $m$ we can take
     the element $g=1$ and $h=(0,-1)$ to see that \eqref{eq:relFarber} is violated.
     
     Finally, we claim that the sequence $\Gamma_m$ is Farber.
     To this end we verify that 
     \begin{equation*}
        \left|\{\:f \in D_\infty/\Gamma_m\:|\:f^{-1}(k,\varepsilon)f \in \Gamma_n\:\}\right| = \begin{cases}
                                                                                                 m \quad& \text{ if $k\equiv 0\pmod m$ and $\varepsilon=1$}\\
                                                                                                 \gcd(2,m) & \text{ if $k\in \gcd(2,m)\bbZ$ and $\varepsilon=-1$}\\
                                                                                                 0 & \text{ otherwise. }
                                                                                         \end{cases}
     \end{equation*}
     for all $k\in\bbZ$ and $\varepsilon\in \{\pm1\}$.
     Indeed, for $f=(i,1)$ we have $(-i,1)\cdot(k,\varepsilon)\cdot(i,1) = (k + (\varepsilon-1)i,\varepsilon)$;
     this is an element of $\Gamma_m$ if and only if $k \equiv i(1-\varepsilon) \pmod m$.
     The claim follows by considering the cases $(1-\varepsilon)$ equals $0$ or $2$ respectively.
  \end{example}
  
  Having emphasized this difference, we are now in the position to state and prove the approximation theorem subject to the relative Farber condition.
  
  \begin{theorem}\label{thm:eqFarber} Let $G$ be a group and let $H\leq G$ be a finite subgroup.
   Let $X$ be a proper $G$-CW-complex such that $X/G$ is of finite type.
   Suppose that $(\Gamma_n)_{n\in \bbN}$ is a sequence of finite index subgroups which is Farber relative to $H$ (see \eqref{eq:relFarber}),
   then
   \begin{equation*}
       \lim_{n\to\infty} \frac{m(\chi, H_p(X/\Gamma_n,\bbC))}{[G:\Gamma_n]} = m_p^{(2)}(\chi, X; G)
   \end{equation*}
   for all $\chi \in \Irr(H)$.
  \end{theorem}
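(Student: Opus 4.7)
The plan is to adapt the proof of Theorem \ref{thm:eqSofic}, replacing the inflation from a group quotient by a direct construction of approximating characters out of the biset structure on $G/\Gamma_n$. First, note that the relative Farber hypothesis forces $\Gamma_n \cap H = \{1\}$ for all sufficiently large $n$ (apply \eqref{eq:relFarber} with $g=1$). Let $\psi_n \in \Ch(G\times H)$ denote the normalized permutation character of the action $(g,h)\cdot f\Gamma_n = gfh^{-1}\Gamma_n$ of $G\times H$ on $G/\Gamma_n$; an elementary fixed-point count identifies \eqref{eq:relFarber} with the statement $\psi_n \to i_G$ pointwise. I then define $\phi_n := \ind_{\psi_n}(\widetilde{\chi}) \in \Ch(G)$, which is well-defined for large $n$ because $\sum_h \psi_n(1,h)\widetilde{\chi}(h) = \widetilde{\chi}(1) = 1$, and continuity of the induction formula \eqref{eq:induced} gives $\phi_n \to \ind_{i_G}(\widetilde{\chi}) = \ind_H^G(\widetilde{\chi}) =: \phi$. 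Since each $\psi_n$ is an honest permutation character of $G\times H$, it is arithmetically hyperlinear of degree $1$ relative to $H$; Theorem \ref{thm:inductionViaArithmeticallyHyperlinear} then promotes this to the fact that $\phi_n$ is arithmetically hyperlinear of degree at most $\varphi(|H|)$. Applying Theorem \ref{thm:approxArith} to the matrices produced by Lemma \ref{lem:existenceOfMatrices} for the boundary operators of $C_\bullet(X)$ yields $b^{\phi_n}_p(C_\bullet(X)) \to b^{\phi}_p(C_\bullet(X))$.

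It remains, for each sufficiently large $n$, to identify
\[
 \frac{\chi(1)}{|H|}\, b^{\phi_n}_p(C_\bullet(X)) = \frac{m(\chi, H_p(X/\Gamma_n, \bbC))}{[G:\Gamma_n]},
\]
which is the main technical step. The character $\phi_n$ is the normalization of the ordinary character of the finite-dimensional $\bbC[G]$-module $W_n := \Hom_{\bbC[H]}(\bbC[G/\Gamma_n], V_\chi)$, where $V_\chi$ affords $\chi$; one computes $\dim_\bbC W_n = \chi(1)[G:\Gamma_n]/|H|$. The trace identity $\Tr_{\calN_\phi}(A) = (\dim V)^{-1}\Tr(l_A | V^k)$ of Section \ref{sec:finitedimensional}, applied to the finite-dimensional bimodule $\calH_{\phi_n}$ arising from $W_n$, gives
\[
 b^{\phi_n}_p(C_\bullet(X)) = \frac{1}{\dim W_n}\,\dim_\bbC H_p\bigl(C_\bullet(X) \otimes_{\bbC[G]} W_n\bigr).
\]
The self-duality of the permutation module $\bbC[G/\Gamma_n]$ as a $\bbC[H]$-module (induced by its canonical basis) then produces a natural $\bbC[G]$-equivariant isomorphism $W_n \cong \bbC[G/\Gamma_n] \otimes_{\bbC[H]} V_\chi$, and consequently $C_\bullet(X) \otimes_{\bbC[G]} W_n \cong C_\bullet(X/\Gamma_n) \otimes_{\bbC[H]} V_\chi$ using $C_\bullet(X/\Gamma_n) = C_\bullet(X)\otimes_{\bbC[G]}\bbC[G/\Gamma_n]$. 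The same trace identity applied now inside the finite group $H$, combined with Lemma \ref{lem:multiplicitiesFiniteGroup}, shows that $\dim_\bbC H_p$ of this latter complex equals $\chi(1)\,b^{\widetilde{\chi}}_p(C_\bullet(X/\Gamma_n)) = m(\chi, H_p(X/\Gamma_n, \bbC))$, and the claimed equality follows.

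The main obstacle I expect is the bookkeeping in the previous paragraph: carefully handling the left/right $\bbC[G]$- and $\bbC[H]$-module conventions on $W_n$, $\bbC[G/\Gamma_n]$ and $V_\chi$ so as to justify the isomorphism $W_n \cong \bbC[G/\Gamma_n]\otimes_{\bbC[H]} V_\chi$ and the ensuing $\bbC$-equivariant identification of chain complexes, given that the paper works throughout with right modules whereas the trace formula of Section \ref{sec:finitedimensional} is phrased via a left action. Once this routine but delicate verification is carried out, combining the identification with the convergence of the first paragraph and the definition $m^{(2)}_p(\chi, X;G) = (\chi(1)/|H|)\,b^{\phi}_p(C_\bullet(X))$ immediately yields the theorem.
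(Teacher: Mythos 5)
Your proposal is essentially correct and follows the same route as the paper: convert the relative Farber condition into $\psi_n \to i_G$, set $\phi_n = \ind_{\psi_n}(\widetilde{\chi})$, use Theorems \ref{thm:inductionViaArithmeticallyHyperlinear} and \ref{thm:approxArith} for the approximation property, and then relate $b^{\phi_n}_p(C_\bullet(X))$ to the ordinary multiplicity via the $\Ind_M / \Res_M$ adjunction. Two remarks. First, the claimed \emph{exact} equality $\sum_{h}\psi_n(1,h)\,\widetilde{\chi}(h) = \widetilde{\chi}(1) = 1$ is false in general: even when $H\cap\Gamma_n=\{1\}$, one can have $\psi_n(1,h)>0$ for $h\neq 1$ whenever some $G$-conjugate of $h$ lies in $\Gamma_n$; what is true (and what the paper establishes) is that this sum converges to $1$, which is all that is needed for $\ind_{\psi_n}(\widetilde{\chi})$ to be defined for large $n$. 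Second, for the "main technical step" the paper first passes to the finite quotient by the normal core $N$ of $\Gamma_n$ and then applies Lemma \ref{lem:multiplicitiesFiniteGroup} together with the abstract Frobenius reciprocity of Lemma \ref{lem:FrobeniusReciprocity}, whereas you apply the finite-dimensional Betti-number formula from Section \ref{sec:finitedimensional} directly and realize the adjunction concretely via self-duality of $\bbC[G/\Gamma_n]$; this is the same computation, and the left/right-module bookkeeping you flag is exactly the part that the paper's abstract reciprocity packaging absorbs. The paper also keeps the factor $|H\cap\Gamma_n|\cdot\dim_\bbC(V^{H\cap\Gamma_n})$ in the formula for all $n$ and only discards it at the end, while you restrict to large $n$ from the start; either is fine since only the limit is relevant.
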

  \begin{proof}
    Let $\psi_n$ be the character of the $G$-$H$-biset $Y_n = G/\Gamma_n$; this means,
    $\psi_n(g,h) = \frac{|\{y \in Y_n\:|\: gyh^{-1} = y\}|}{|Y_n|}$.
    The sequence $(\Gamma_n)_{n\in\bbN}$ being Farber relative to $H$, means that $\lim_{n\to\infty}\psi_n = i_G \in \Ch(G\times H)$.
    In particular, we have
    \begin{equation*}
       \lim_{n\to\infty} \sum_{h\in H} \psi_n(1,h) \widetilde{\chi}(h) = \sum_{h\in H} i_G(1,h) \widetilde{\chi}(h) = 1.
    \end{equation*}
    This shows that the normalized character $\widetilde{\chi}$ can be induced via $\psi_n$ (in the sense of Definition \ref{deflem:induction}) for all sufficiently large $n\in \bbN$.
    Let $\phi_n = \ind_{\psi_n}(\widetilde{\chi})$ be the corresponding induced character.
    We note (using $\psi_n \to i_G$) that the sequence of characters $(\phi_n)_{n\in\bbN}$ converges to the induced character $\ind_H^G(\widetilde{\chi})$.
    
    By Theorem \ref{thm:inductionViaArithmeticallyHyperlinear} the characters $\phi_n$ are arithmetically hyperlinear of degree at most $\varphi(|H|)$. 
    We deduce, using Theorem \ref{thm:approxArith}, that the sequence $(\phi_n)_{n\in\bbN}$ has the approximation property (Def.~\ref{def:approximationProperty}) with respect to all matrices over the rational
    group ring $\bbQ[G]$. Notably, we obtain
    \begin{equation}\label{eq:reductionToBettiNumbers}
        \lim_{n\to\infty} b_p^{\phi_n}(C_\bullet(X)) = b^{\ind_H^G(\widetilde{\chi})}_p(C_\bullet(X)) = \frac{|H|}{\chi(1)} m_p^{(2)}(\chi,X; G).
    \end{equation}
    
    Fix $n\in \bbN$ and put $\phi := \phi_n$ and $\psi:= \psi_n$.
    As a next step we want to understand how the Betti numbers $b_p^{\phi}(C_\bullet(X))$ are related to the multiplicities.
    This reduces to a statement about finite groups.
    Let $N$ be the largest normal subgroup of $G$ which is contained in $\Gamma_n$, that is, $N = \bigcap_{g\in G} g\Gamma_n g^{-1}$.
    We will consider everything reduced modulo $N$.
    Let $\psi'$ be the character of $Y_n$ considered as a $G/N$-$H$-biset, i.e.\ $\infl^G_{G/N}(\psi') = \psi$. And similarly, let $\phi'= \ind_{\psi'}(\widetilde{\chi})$ so that
    $\phi = \infl^G_{G/N}(\phi')$.
    Now we note that $b_p^{\phi}(C_\bullet(X)) = b_p^{\phi'}(C_\bullet(X/N))$ (this follows from Corollary \ref{cor:PropertiesOfRank}).
    We may thus assume that $G$ is a finite group, and that $X$ is a finite type $G$-CW-complex for the calculation of $b_p^{\phi}(C_\bullet(X))$.
    
    Let $M = \bbC[Y_n]$ be the $\bbC[G]$-$\bbC[H]$-bimodule associated with $Y_n$ and let $V$ be a right $\bbC[H]$-module which affords $\chi$.
    Let $\vartheta$ be the ordinary character of the induced representation $\Ind_M(V)$.
    By Lemma \ref{lem:inducedCharacterFormula} the identity $\vartheta = \frac{\chi(1)}{|H|} \phi$ holds, i.e.\ $\widetilde{\vartheta} = \phi$.
    We deduce using Lemma \ref{lem:multiplicitiesFiniteGroup} and Lemma \ref{lem:FrobeniusReciprocity} that
    \begin{equation*}
       b_p^\phi(C_\bullet(X)) = \frac{1}{\vartheta(1)} m(\Ind_M(V), H_p(X,\bbC)) = \frac{1}{\vartheta(1)} m(\chi, \Res_M(H_p(X,\bbC))).
    \end{equation*}
    
    We conclude the proof with three observations.
    First of all, it was pointed out in Example \ref{ex:generalizedRestriction} that $\Res_M(H_p(X,\bbC))$ is simply the module
    $H_p(X,\bbC)_{\Gamma_n}$ of $\Gamma_n$-coinvariants of $H_p(X,\bbC)$.
    Since we are working with finite groups, every $\bbC[\Gamma_n]$-module is projective and, in addition, taking coinvariants is an exact functor.
    We conclude that $H_p(X,\bbC)_{\Gamma_n} = H_p(C_\bullet(X,\bbC)_{\Gamma_n}) = H_p(X/\Gamma_n,\bbC)$.

    The second observation is that
    \begin{equation*}
    \vartheta(1) = \dim_\bbC(\Ind_M(V)) = |Y_n/H| \dim_\bbC(V^{H\cap \Gamma_n}). 
    \end{equation*}
    We  briefly explain this identity. Recall that $\Ind_M(V) = \Hom_{\bbC[H]}(\bbC[Y_n], V)$.
    A homomorphism $\alpha$ maps every basis element $y \in Y_n$ an element in $\alpha(y) \in V$ in an $H$-equivariant way.
    The homomorphism $\alpha$ is completely determined if we know its value at a representative of every (right) $H$-orbit in $Y_n$.
    However, the value cannot be arbitrary; there is one restriction: the value $\alpha(y)$ has to be invariant under the stabilizer of
    $y$ in $H$. Let $y = f\Gamma_n$, then $yh = fh\Gamma_n \stackrel{?}{=} f\Gamma_n = y$ exactly if $h \in H\cap\Gamma_n$.
    
    The last remark is that $\Gamma_j \cap H = \{1\}$ for all sufficiently large $j$.
    In fact, for $h\in H$ with $h\neq 1$ the relative Farber condition yields
    \begin{equation*}
        \lim_{j\to\infty} \frac{|\{\:f \in G/\Gamma_j \:|\: 1 \in h\Gamma_j\:\}|}{[G:\Gamma_n]} = i_G(1,h) = 0.
    \end{equation*}
    This implies that $h\not\in \Gamma_j$ for all sufficiently large $j\in \bbN$.
    
    The proof of the Theorem is complete when we insert the formula
    \begin{equation*}
      b_p^{\phi_n}(C_\bullet(X)) = \frac{|H|}{\dim_\bbC(V^{H\cap \Gamma_n})} \: \frac{m(\chi, H_p(X/\Gamma_n,\bbC))}{|H\cap\Gamma_n| \cdot [G:\Gamma_n]}
    \end{equation*}
    in the equation \eqref{eq:reductionToBettiNumbers}. Note that $\dim_\bbC(V) = \chi(1)$.
  \end{proof}

\subsection{Applications to the cohomology of groups}
  In this section we discuss applications of our main theorem to the cohomology of groups.
  It seems to us that these observations are not obvious to obtain without $L^2$-methods.
  
  Recall that a group $\Gamma$ is said to be of type $F_m$ if there is a $K(\Gamma,1)$-space with finite $m$-skeleton.
  Using this we derive the simple translation stated in the introduction.
  \begin{theorem}\label{thm:groupCohomology}
  Fix $p\in \bbN$.
   Let $\Gamma$ be a group of type $F_{p+1}$ and let $H$ be a finite group which acts on $\Gamma$ by automorphisms.
   Assume that the fixed point group $\Gamma^h$ has infinite index in $\Gamma$ for all $h\in H\setminus\{1\}$.
   
   Let $\Gamma \supset \Gamma_1 \supset \Gamma_2 \supset \dots$ be a decreasing sequence of finite index normal $H$-stable subgroups with trivial intersection.
    Then
   \begin{equation*}
       \lim_{n\to \infty} \frac{m(\chi, H^p(\Gamma_n,\bbC))}{[\Gamma:\Gamma_n]} = \frac{\chi(1)}{|H|} \: b^{(2)}_p(\Gamma) 
   \end{equation*}
   for every character $\chi \in \Irr(H)$.
  \end{theorem}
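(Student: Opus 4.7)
The strategy is to reduce the statement to Theorem \ref{thm:approximation1} applied to the semidirect product $G = \Gamma \rtimes H$, and then use Proposition \ref{prop:centralizer} to identify the resulting $L^2$-multiplicity with a normalized $L^2$-Betti number of $\Gamma$.

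First I form $G = \Gamma \rtimes H$, viewing $H$ as a finite subgroup of $G$. Since each $\Gamma_n$ is $H$-stable and normal in $\Gamma$, it is a finite index normal subgroup of $G$ with $[G:\Gamma_n] = |H|\cdot[\Gamma:\Gamma_n]$, and $\bigcap_n \Gamma_n = \{1\}$. A direct calculation inside the semidirect product shows $C_G(h) = \Gamma^h \rtimes C_H(h)$ for every $h \in H$, so
\begin{equation*}
[G:C_G(h)] \;=\; [\Gamma:\Gamma^h]\cdot[H:C_H(h)] \;=\; \infty
\end{equation*}
for every $h \in H\setminus\{1\}$, by hypothesis. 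Since $[G:\Gamma]$ is finite, $G$ inherits type $F_{p+1}$ from $\Gamma$; this gives a proper $G$-CW-complex $X$ (equivalently, a partial projective resolution of length $p+1$) such that $b^{(2)}_p(X;G) = b^{(2)}_p(G)$, and such that $C_\bullet(X/\Gamma_n, \bbC)$ computes $H_\bullet(\Gamma_n, \bbC)$ in degrees $\leq p$ with its natural $H$-action for all sufficiently large $n$ (at which point $\Gamma_n \cap H = \{1\}$).

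Applying Theorem \ref{thm:approximation1} to $G$, $X$ and the conjugate character $\bar\chi$ yields
\begin{equation*}
\lim_{n\to\infty} \frac{m(\bar\chi,\, H_p(X/\Gamma_n, \bbC))}{[G:\Gamma_n]} \;=\; m^{(2)}_p(\bar\chi, X; G).
\end{equation*}
By the centralizer condition verified above, Proposition \ref{prop:centralizer} identifies the right-hand side as $\tfrac{\chi(1)}{|H|}\, b^{(2)}_p(G)$, and the standard multiplicativity of $L^2$-Betti numbers under finite covers (applied to $\Gamma \leq G$) gives $b^{(2)}_p(G) = b^{(2)}_p(\Gamma)/|H|$.

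It remains to pass from homology to cohomology. Because $\bbC$ has characteristic zero and $H$ is finite, $H^p(\Gamma_n, \bbC)$ is the $H$-dual of $H_p(\Gamma_n, \bbC)$, so $m(\chi, H^p(\Gamma_n, \bbC)) = m(\bar\chi, H_p(\Gamma_n, \bbC))$. Multiplying through by $|H|$ and using $[G:\Gamma_n] = |H|\cdot[\Gamma:\Gamma_n]$ together with $\chi(1) = \bar\chi(1)$ produces exactly the limit claimed. The only delicate point I anticipate is the construction of $X$: if $\Gamma$ has torsion then there need not be a cocompact contractible model, but the chain-complex formulation underlying Theorem \ref{thm:approximation1} (Theorems \ref{thm:eqSofic} and \ref{thm:eqFarber} reduce everything to matrices over $\bbQ[G]$) applies equally well to a partial resolution of length $p+1$, so this is a technicality rather than a genuine obstacle.
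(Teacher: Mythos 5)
Your proof is correct and follows essentially the same path as the paper's: form $G=\Gamma\rtimes H$, observe that $C_G(h)=\Gamma^h\rtimes C_H(h)$ has infinite index for all $h\neq 1$, use the $(p+1)$-skeleton of a free contractible $G$-CW-complex, apply Theorem \ref{thm:approximation1}, identify the limit via Proposition \ref{prop:centralizer} and the $L^2$-Betti number multiplicativity, and finally dualize to go from $H_p$ to $H^p$. The only cosmetic difference is that you carefully thread the conjugate character $\bar\chi$ through the argument, whereas the paper takes the shortcut of noting that the limiting expression depends only on $\chi(1)=\bar\chi(1)$; and the ``technicality'' you flag at the end is handled cleanly by the classifying-space-plus-skeleton construction, which needs no cocompact contractible model of $\Gamma$.
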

  \begin{proof}
   Consider the semidirect product group $G = \Gamma \rtimes H$. We will identify $\Gamma$ with $\Gamma \rtimes\{1\} \subset G$.
   Since $G$ contains $\Gamma$ as a finite index subgroup, the group $G$ has type $F_{p+1}$ as well
   (see \cite[7.2.3]{Geoghegan2008}).
   Let $X$ be a classifying space for $G$. This means, $X$ is a contractible free $G$-CW-complex. Since $G$ is of type $F_{p+1}$, we can assume
    that the quotient $X^{p+1}/G$ of the $(p+1)$-skeleton $X^{p+1}$ is of finite type.
   Note that the space $X^{p+1}/\Gamma_n$ is the $p+1$-skeleton of the Eilenberg-MacLane space of $\Gamma_n$ -- in particular,
   its (co)homology is isomorphic to the group (co)homology of $\Gamma_n$ up to degree $p$.
   Moreover, the inherited action of $H$ yields the action of $H$ in the cohomology of the group $\Gamma_n$.
   
   Our assumption that $\Gamma_n$ is $H$-stable implies that $\Gamma_n$ is a normal subgroup of $G$.
   Further, the centralizer of every non-trivial $h\in H$ inside $G$ is $C_G(h) = \Gamma^h \times C_H(h)$.
   We deduce that $C_G(h)$ has infinite index in $G$.
   Proposition \ref{prop:centralizer} now shows that
   \begin{equation*}
         m_p^{(2)}(\chi, X^{p+1}; G) = \frac{\chi(1)}{|H|} \: b_p^{(2)}(G) = \frac{\chi(1)}{|H|^2} \:  b_p^{(2)}(\Gamma)
   \end{equation*}
   where we use assertion (9) of Thm.\ 1.35 in \cite{LuckBook} in the last step.
   
   Now we apply Theorem \ref{thm:approximation1} to deduce
   \begin{equation*}
       \lim_{n\to \infty} \frac{m(\chi, H_p(X^{p+1}/\Gamma_n,\bbC))}{[G:\Gamma_n]} = \frac{\chi(1)}{|H|^2} \:  b_p^{(2)}(\Gamma).
   \end{equation*}
   We can cancel an appearance of $|H|$ in the denominator using $[G:\Gamma_n] = |H| [\Gamma:\Gamma_n]$.
   Finally, the universal coefficient theorem implies that the representation of $H$ in the $p$-th cohomology 
   $H^p(\Gamma_n,\bbC)$ is dual to the representation in the $p$-th homology group. However, the limit only depends on the 
   dimension of the irreducible representation $\chi$, and this property does not change under taking duals.
  \end{proof}
  
  \begin{corollary}\label{cor:traces}
    Let $\Gamma$ be a group of type $F_{p+1}$ and let $\sigma \in \Aut(\Gamma)$ denote an automorphism of finite order
    such that the fixed point group $\Gamma^{\sigma^j}$ has infinite index in $\Gamma$ for all $0 < j < \ord(\sigma)$.
    For every decreasing sequence of finite index normal subgroups $(\Gamma_n)_{n\in\bbN}$ with 
    $\bigcap_{n\in \bbN} \Gamma_n = \{1\}$, the normalized traces satisfy
    \begin{equation*}
       \lim_{n\to\infty} \frac{\Tr( \sigma \:|\: H^p(\Gamma_n,\bbC))}{[\Gamma:\Gamma_n]} = 0.
    \end{equation*}
  \end{corollary}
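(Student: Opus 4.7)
The plan is to deduce Corollary \ref{cor:traces} directly from Theorem \ref{thm:groupCohomology} by applying it to the finite cyclic group $H = \langle \sigma\rangle \subseteq \Aut(\Gamma)$ and summing over irreducible characters. The hypotheses transfer immediately: $H$ is a finite group acting on $\Gamma$ by automorphisms; every non-trivial element of $H$ has the form $\sigma^j$ with $0 < j < \ord(\sigma)$, so the fixed-point subgroup $\Gamma^{\sigma^j}$ has infinite index in $\Gamma$ by hypothesis; and the sequence $(\Gamma_n)_{n\in \bbN}$ is tacitly assumed to be $\sigma$-stable (otherwise $\sigma$ would not act on $H^p(\Gamma_n,\bbC)$), hence $H$-stable. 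Note that $\sigma \neq 1$, since otherwise no $j$ with $0 < j < \ord(\sigma)$ exists and the statement would contradict L\"uck's approximation theorem whenever $b_p^{(2)}(\Gamma)\neq 0$; thus $\ord(\sigma) \geq 2$.

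Next I decompose the trace using the representation theory of the finite abelian group $H$. Since $H$ is cyclic, every $\chi \in \Irr(H)$ is one-dimensional and the representation of $H$ on the finite-dimensional space $H^p(\Gamma_n, \bbC)$ decomposes into its isotypic components, so
\begin{equation*}
   \Tr\bigl( \sigma \:|\: H^p(\Gamma_n,\bbC)\bigr) = \sum_{\chi \in \Irr(H)} \chi(\sigma) \cdot m\bigl(\chi, H^p(\Gamma_n,\bbC)\bigr).
\end{equation*}
Dividing by $[\Gamma:\Gamma_n]$ and taking the limit term-by-term (the sum is finite), Theorem \ref{thm:groupCohomology} yields
\begin{equation*}
   \lim_{n\to\infty} \frac{\Tr\bigl(\sigma\:|\: H^p(\Gamma_n,\bbC)\bigr)}{[\Gamma:\Gamma_n]} = \frac{b_p^{(2)}(\Gamma)}{|H|} \sum_{\chi \in \Irr(H)} \chi(\sigma),
\end{equation*}
using $\chi(1) = 1$ for all $\chi \in \Irr(H)$.

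Finally, I invoke the standard orthogonality relation: the sum $\sum_{\chi \in \Irr(H)} \chi(\sigma)$ is the value at $\sigma$ of the character of the regular representation of $H$, which equals $|H|$ if $\sigma = 1$ and vanishes otherwise. Since $\sigma \neq 1$, the right-hand side is $0$, which is the desired conclusion. There is no genuine obstacle here; the only point requiring mild care is confirming that $\sigma$ is non-trivial (handled by the vacuity remark above) and that the $H$-action on the cohomology is well-defined (which is why $\sigma$-stability of the subgroups is implicit in the statement).
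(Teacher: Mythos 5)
Your proof is correct and proceeds exactly as the paper's own argument: apply Theorem~\ref{thm:groupCohomology} with $H=\langle\sigma\rangle$, expand the trace over the one-dimensional irreducible characters of the cyclic group, and invoke orthogonality to kill the sum at the non-trivial element $\sigma$. The additional remarks you include — that $\sigma\neq 1$ must be (implicitly) assumed lest the hypothesis be vacuous and the conclusion contradict L\"uck's theorem, and that the $\Gamma_n$ must be $\sigma$-stable for the $H$-action on cohomology to be defined — are not spelled out in the paper's proof but are correct and worth noting.
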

  \begin{remark}
    Assume that $\Gamma$ is of type $F$, i.e.\ has a finite $K(\Gamma,1)$-space.
    Then the Lefschetz number of $\sigma$ (the alternating sum of the traces) is well-defined.
    An immediate corollary is that the sequence of normalized Lefschetz numbers converges to zero.
    This consequence can indeed be proven in a more elementary way without any reference to $L^2$-methods.
  \end{remark}
  \begin{proof}[Proof of Corollary \ref{cor:traces}]
   Let $H = \langle \sigma \rangle$ be the cyclic group generated by $\sigma$.
   The hypotheses of Theorem~\ref{thm:groupCohomology} are satisfied.
   Note that the irreducible representations of $H$ are one-dimensional.
   The following calculation uses the orthogonality relations of characters of finite groups in the last step
   \begin{align*}
      \lim_{n\to\infty} \frac{\Tr( \sigma \:|\: H^p(\Gamma_n,\bbC))}{[\Gamma:\Gamma_n]}
      &= \lim_{n \to \infty} \sum_{\chi \in \Irr(H)} \chi(\sigma) \frac{m(\chi,H^p(\Gamma_n,\bbC))}{[\Gamma:\Gamma_n]}\\
      &= \frac{b_p^{(2)}(\Gamma)}{\ord(\sigma)} \: \sum_{\chi \in \Irr(H)} \chi(\sigma) = 0.
   \end{align*}
    This completes the proof.
  \end{proof}

  \subsection{Epilogue: An application to the cohomology of arithmetic groups}
  
  Let $F/\bbQ$ be a totally real Galois extension of degree $d = [F:\bbQ]$ with Galois group $\Gal(F/\bbQ)$.
  Let $\mathbb{G}$ be a semi-simple linear algebraic group defined over $\bbQ$ such that the group of real points 
   $G_\infty = \mathbb{G}(\bbR)$ is non-compact. We choose a maximal compact subgroup $K \leq G_\infty$.
  Recall that the fundamental rank of a semi-simple real Lie group $G_\infty$ is 
  the difference $\rnk_\bbC(G_\infty) - \rnk_\bbC(K)$ of its complex rank and the complex rank of $K$.
  The associated Riemannian symmetric space $K\backslash G_\infty$ will be denoted by $X_0$
  
  Extending scalars to $F$ we obtain a semi-simple linear algebraic group $\mathbb{G}\times_\bbQ F$ over $F$.
  The Galois group $\Gal(F/\bbQ)$ acts on $\mathbb{G}(F)$. In fact, applying Weil's restriction of scalars functor, this action can be
  interpreted as an action by rational automorphism.
  
  We are interested in arithmetic subgroups $\Gamma \subseteq \mathbb{G}(F)$ of the group $\mathbb{G}\times_\bbQ F$.
  Such an arithmetic subgroup acts on the associated locally symmetric space $X = X_0^d$.
  The dimension of this space is $\dim(X) = d (\dim(G_\infty) - \dim(K))$.
  If $\Gamma$ is stable under the Galois group $\Gal(F/\bbQ)$, then $\Gal(F/\bbQ)$ acts on the cohomology of $\Gamma$ and we are interested in this action.
  We obtain the following result.
\begin{corollary}\label{cor:GaloisAction}
  Suppose that $G_\infty$ has fundamental rank $0$ and let $p = \frac{1}{2}\dim(X)$ be the middle dimension of $X$.
   Let $\Gamma\subseteq \mathbb{G}(F)$ be an arithmetic subgroup of  which is $\Gal(F/\bbQ)$-stable
   and let $(\Gamma_n)_{n\in\bbN}$ be a decreasing sequence 
   of finite index normal $\Gal(F/\bbQ)$-stable subgroups with $\bigcap_{n} \Gamma_n =\{1\}$.
   For every irreducible representation $\vartheta\in\Irr(\Gal(F/\bbQ))$ the multiplicity of $\vartheta$ in degree $p$ satisfies
 \begin{equation*}
    \lim_{n\to \infty} \frac{m( \vartheta, H^p(\Gamma_n,\bbC) )}{[\Gamma:\Gamma_n]} = \frac{ (-1)^p \: \vartheta(1) \:\chi(\Gamma)}{[F:\bbQ]} 
 \end{equation*}
 where  $\chi(\Gamma)$ denotes the Euler characteristic of $\Gamma$ (in the sense of Wall).
 
 Since $\chi(\Gamma) \neq 0$, every irreducible representation of the Galois group occurs with arbitrarily large multiplicity in the middle cohomology
 of some $\Gamma_n$.
\end{corollary}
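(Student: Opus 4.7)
The plan is to apply Theorem~\ref{thm:groupCohomology} with the finite subgroup $H = \Gal(F/\bbQ)$, which acts on $\Gamma$ by the induced Galois action. Three hypotheses must be verified. First, $\Gamma$ is of type $F_{p+1}$: arithmetic subgroups of the semisimple $\bbQ$-group $\Res_{F/\bbQ}\mathbb{G}$ are virtually torsion-free, and any torsion-free finite index subgroup acts freely with compact quotient on the Borel--Serre bordification of $(G_\infty)^d/K^d$, a contractible manifold with boundary. Hence $\Gamma$ has type VF, which yields $F_{p+1}$. Second, $H$ acts by automorphisms on $\Gamma$ by the assumed $\Gal$-stability, and the subgroups $\Gamma_n$ are $H$-stable by hypothesis.

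The key third hypothesis requires $[\Gamma:\Gamma^h] = \infty$ for every $h \neq 1$. For such $h$ the fixed field $E = F^h$ is a proper subfield of $F$, so $\Gamma^h$ sits inside $\Gamma \cap \mathbb{G}(E)$. The latter is a discrete subgroup of $\Res_{E/\bbQ}\mathbb{G}(\bbR) = (G_\infty)^{[E:\bbQ]}$, sitting as a proper closed subgroup of $\Res_{F/\bbQ}\mathbb{G}(\bbR) = (G_\infty)^d$ (embedded diagonally via the embeddings of $F$ extending each embedding of $E$). Since $\Gamma$ has finite covolume in $(G_\infty)^d$ while $\Gamma\cap\mathbb{G}(E)$ is discrete in a lower-dimensional closed subgroup and hence has infinite covolume, $[\Gamma:\Gamma\cap\mathbb{G}(E)]$, and a fortiori $[\Gamma:\Gamma^h]$, is infinite.

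With the hypotheses in place, Theorem~\ref{thm:groupCohomology} yields
$$\lim_{n\to\infty} \frac{m(\vartheta, H^p(\Gamma_n, \bbC))}{[\Gamma : \Gamma_n]} \:=\: \frac{\vartheta(1)}{d}\: b_p^{(2)}(\Gamma).$$
It remains to identify $b_p^{(2)}(\Gamma)$. The fundamental rank of $(G_\infty)^d$ equals $d\:(\rnk_\bbC G_\infty - \rnk_\bbC K) = 0$, so by Borel's $L^2$-vanishing theorem for symmetric spaces in the equal rank case (applied to the proper cocompact action of a torsion-free finite index subgroup on the Borel--Serre model of $X = X_0^d$), $b_k^{(2)}(\Gamma) = 0$ for all $k \neq p = \dim(X)/2$. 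Combining this with the Euler--Poincar\'e formula $\chi(\Gamma) = \sum_k (-1)^k b_k^{(2)}(\Gamma)$ gives $b_p^{(2)}(\Gamma) = (-1)^p \chi(\Gamma)$, and substitution produces the stated limit. For the final assertion, Hopf's theorem provides $\chi(X_0) \neq 0$ in the equal rank case, and Hirzebruch proportionality (or Harder's extension of Gau\ss--Bonnet to arithmetic lattices) translates this into $\chi(\Gamma) \neq 0$; hence the limit is a nonzero multiple of $\vartheta(1)$ and $m(\vartheta, H^p(\Gamma_n, \bbC))$ is unbounded in $n$.

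I expect the main obstacle to be the careful verification of the infinite-index condition: one must rule out that a discrete subgroup of a proper closed subgroup of a connected Lie group can be of finite index in a lattice of the ambient group, and also handle the mild subtlety that $\Gamma$ itself need not be torsion-free (so that $\chi(\Gamma)$ must be read in Wall's sense). A secondary point is pinning down a clean reference for the vanishing of $L^2$-Betti numbers of arithmetic groups outside the middle degree in the equal rank setting when $\Gamma$ is non-cocompact.
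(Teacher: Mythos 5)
Your proof is correct and follows the paper's overall strategy: verify the hypotheses of Theorem~\ref{thm:groupCohomology}, apply it, and identify $b_p^{(2)}(\Gamma)$ using middle-degree concentration and the $L^2$-Euler--Poincar\'e formula. The one place you diverge is in establishing $[\Gamma : \Gamma^h] = \infty$ for nontrivial $h \in \Gal(F/\bbQ)$. You observe $\Gamma^h = \Gamma \cap \mathbb{G}(F^{\langle h\rangle})$ is a discrete subgroup of the proper closed subgroup $\Res_{F^{\langle h\rangle}/\bbQ}\mathbb{G}(\bbR) \subsetneq (G_\infty)^d$ and invoke the fact that a lattice cannot live in a proper closed subgroup (Borel density, or a Zariski-closure argument). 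The paper instead formulates and proves a self-contained measure-theoretic statement (Lemma~\ref{lem:nolattice}): if a discrete subgroup $\Gamma$ of $G_\infty^n$ is pointwise fixed by a nontrivial permutation of the factors, the quotient map $G_\infty^n/\Gamma \to G_\infty^{n-1}$, $(g_1,\ldots,g_n) \mapsto (g_2 g_1^{-1},\ldots,g_n g_1^{-1})$, pushes any invariant Radon measure to a Haar measure on the noncompact group $G_\infty^{n-1}$, forcing infinite covolume. Your route is conceptually transparent in the arithmetic picture but imports a nontrivial external theorem; the paper's lemma is elementary and makes the argument self-contained, at the cost of setting up the reduction to a single cycle. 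For the identification $b_p^{(2)}(\Gamma) = (-1)^p\chi(\Gamma) \neq 0$, the paper cites Gaboriau's proportionality principle to bootstrap from the cocompact case (Borel), or alternatively the theory of limit multiplicities; your invocation of Borel's vanishing plus Harder's Gau\ss--Bonnet amounts to the same thing, and your flagged concern about the non-cocompact case is precisely what Gaboriau's proportionality settles.
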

\begin{proof}
   The existence of the Borel-Serre compactification  shows that the group $\Gamma$ is of type $F_\infty$.
    To apply Theorem \ref{thm:groupCohomology}, we need to show that the fixed point group $\Gamma^\sigma$ has infinite index 
   in $\Gamma$ for every non-trivial $\sigma \in \Gal(F/\bbQ)$.
   Note that the Galois group $\Gal(F/\bbQ)$ acts faithfully on $\mathbb{G}(\bbR \otimes_\bbQ F) = G_\infty^d$ by permuting the factors
   and $\Gamma$ is stable under this action.
   The next lemma (\ref{lem:nolattice}) shows that $\Gamma^\sigma$ cannot be a lattice in $G^d_\infty$; hence $[\Gamma:\Gamma^\sigma] = \infty$.
   
   The $L^2$-Betti numbers of the arithmetic group $\Gamma$ are well-known: we have $b^{(2)}_m(\Gamma) = 0$ unless $m = p$; in this case
   $b^{(2)}_p(\Gamma) = (-1)^p \chi(\Gamma) > 0$. One way to see this (pointed out to us by J.~Raimbault and R.~Sauer)
   is to use Gaboriau's proportionality principle \cite[Cor.~0.2]{Gaboriau2002} and
   the fact that the result is known for all cocompact lattices, which goes back to the work of Borel \cite{Borel1985} (see also \cite[Thm.~5.12]{LuckBook}).
   Another way to calculate the $L^2$-Betti numbers is to use the theory of limit multiplicities (as developed in \cite{deGeorgeWallach1978, RohlfsSpeh1987, Savin1989}) 
   and L\"uck's approximation theorem.
\end{proof}
\begin{remark}
  Corollary \ref{cor:introSL2} from the introduction is a direct consequence of this result, since $\SL_2(\bbR)$ has fundamental rank $0$.
\end{remark}
\begin{lemma}\label{lem:nolattice}
  Let $G_\infty$ be a non-compact, locally compact, unimodular, Hausdorff topological group.
  Let the symmetric group $S_n$ act on $G_\infty^n$ by permutation of the factors.
  Assume that $\Gamma\subseteq G_\infty^n$ is a discrete subgroup which is pointwise fixed under some non-trivial $\sigma\in S_n$,
  then $\Gamma$ is not a lattice in $G_\infty^n$ (i.e.\ has infinite invariant covolume).
\end{lemma}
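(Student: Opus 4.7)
The plan is to produce a closed intermediate subgroup $D$ with $\Gamma \subseteq D \subseteq G_\infty^n$ such that $D \backslash G_\infty^n$ carries an infinite $G_\infty^n$-invariant measure, and then to derive a contradiction from Weil's integration formula applied to the chain $\Gamma \subseteq D \subseteq G_\infty^n$.

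First, since $\sigma \in S_n$ is non-trivial, it has at least one orbit of length $\geq 2$; let $k$ be the number of $\sigma$-orbits on $\{1, \ldots, n\}$, so $k < n$. Set
\[
D = \{(g_1, \ldots, g_n) \in G_\infty^n : g_i = g_j \text{ whenever } i, j \text{ lie in the same } \sigma\text{-orbit}\}.
\]
This is a closed subgroup of $G_\infty^n$, and selecting one coordinate per orbit identifies $D$ topologically with $G_\infty^k$. The hypothesis that $\Gamma$ is pointwise fixed by $\sigma$ is exactly the statement $\Gamma \subseteq D$. Both $G_\infty^n$ and $D$ are unimodular, being finite products of unimodular factors.

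Next, I pick a set $I$ of orbit representatives and let $E \subseteq G_\infty^n$ be the closed subgroup of tuples whose coordinates in $I$ equal $1$; then $E \cong G_\infty^{n-k}$ with $n-k \geq 1$, and the multiplication map $D \times E \to G_\infty^n$ is a homeomorphism. Since both $D$ and $G_\infty^n$ are unimodular, the homogeneous space $D \backslash G_\infty^n$ admits a (unique up to scalar) $G_\infty^n$-invariant Radon measure $\nu$. Using $E$ as a continuous section, $\nu$ is identified with Haar measure on $G_\infty^{n-k}$, and so $\nu(D \backslash G_\infty^n) = \infty$ because $G_\infty$ is non-compact.

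Finally, suppose for contradiction that $\Gamma$ is a lattice in $G_\infty^n$. Since $\Gamma$ is discrete in $G_\infty^n$ and $D$ carries the subspace topology, $\Gamma$ is also discrete in $D$; choosing a neighbourhood $U$ of $1$ in $D$ with $U \cap \Gamma = \{1\}$ shows that the invariant measure of $\Gamma \backslash D$ is at least $\mu_D(U) > 0$. Weil's integration formula for the tower of unimodular closed subgroups $\Gamma \subseteq D \subseteq G_\infty^n$ then yields
\[
\bar\mu(\Gamma \backslash G_\infty^n) \;=\; \bar\mu_D(\Gamma \backslash D)\cdot \nu(D \backslash G_\infty^n) \;=\; \infty,
\]
contradicting the assumption that $\Gamma$ has finite covolume. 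The only delicate step is the measure-theoretic bookkeeping: verifying that $D \backslash G_\infty^n$ really carries an invariant measure and that Weil's formula applies. Both are standard consequences of unimodularity and the product decomposition $G_\infty^n \cong D \cdot E$, so no serious obstacle arises.
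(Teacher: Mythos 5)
Your proof is correct, and the geometric idea is the same as the paper's: the subgroup $D$ of $\sigma$-fixed tuples is a proper closed unimodular subgroup containing $\Gamma$ with non-compact quotient $D\backslash G_\infty^n$, and this forces $\Gamma$ to have infinite covolume. The packaging, however, differs in two ways. First, you handle an arbitrary non-trivial $\sigma$ directly via its orbit decomposition, introducing $D\cong G_\infty^k$ and the complementary closed subgroup $E\cong G_\infty^{n-k}$; the paper instead reduces to the case where $\sigma$ is a single $n$-cycle and ``ignores the fixed points,'' which is slightly informal. Second, you invoke the tower version of Weil's integration formula for $\Gamma\subseteq D\subseteq G_\infty^n$ to transfer infiniteness from $D\backslash G_\infty^n$ up to $\Gamma\backslash G_\infty^n$; the paper avoids the abstract tower formula by constructing the explicit map $\widetilde p\colon (g_1,\dots,g_n)\mapsto(g_2g_1^{-1},\dots,g_ng_1^{-1})$ (which, in the $n$-cycle case, is precisely the identification of $D\backslash G_\infty^n$ with your $E$) and pushes the invariant measure forward. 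One small point of care you glossed over: to conclude $\bar\mu_D(\Gamma\backslash D)\geq\mu_D(U)>0$ you should shrink $U$ so that $UU^{-1}\cap\Gamma=\{1\}$, ensuring the quotient map $D\to\Gamma\backslash D$ is injective on $U$; discreteness alone (i.e.\ $U\cap\Gamma=\{1\}$) is not quite enough. That fixed, both routes are sound; yours is cleaner in treating general $\sigma$, the paper's is more hands-on and avoids invoking the chained Weil formula.
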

\begin{proof}
 Looking at the cycle decomposition of $\sigma$, we may assume that $\sigma$ has exactly one non-trivial cycle. 
 For simplicity, we ignore the fixed points and we assume that $\sigma = (1 \: 2 \: \dots \: n)$ is an $n$-cycle.
 In particular, there is a subgroup $\Gamma_0 \subseteq G_\infty$ such that $\Gamma$ is the image of $\Gamma_0$ under the diagonal embedding $G_\infty\to G_\infty^n$.
 
 Consider the continuous surjective map $\widetilde{p}: G_\infty^n \to G_\infty^{n-1}$ which takes $g = (g_1,\dots,g_n)$ to $\widetilde{p}(g) = (g_2g^{-1}_1,\dots g_n g_1^{-1})$.
 It factors to a well-defined continuous map $p: G_\infty^n/\Gamma \to G_\infty^{n-1}$.
 Let $\nu$ be a non-trivial $G^n$-invariant Radon measure on $G_\infty^n/\Gamma$. 
 Note that $G_\infty^n$ and $\Gamma$ are unimodular, so that such a measure exists by the work of Weil, see e.g.\ \cite[VIII 3.25]{Elstrodt2005}.
 We claim that the push-forward measure $p_*(\nu)$ is a left Haar measure on $G_\infty^{n-1}$. Indeed,
 \begin{align*}
     p_*(\nu((g_2,\dots g_n)\cdot X)) &= \nu\left(p^{-1}((g_2,\dots g_n)\cdot X)\right)\\
          &= \nu\left((1,g_2,\dots,g_n)\cdot p^{-1}(X)\right) = p_*(\nu)(X)
 \end{align*}
 for every measurable subset $X\subseteq G_\infty^{n-1}$.
 Note that $p_*(\nu)$ is non-trivial, since $\nu$ is non-trivial.
 However, since $G_\infty$ is non-compact it follows that $\nu(G_\infty^n/\Gamma) = p_*(\nu)(G_\infty^{n-1}) = \infty$
 by \cite[VIII 3.15]{Elstrodt2005}.
\end{proof}

\providecommand{\bysame}{\leavevmode\hbox to3em{\hrulefill}\thinspace}
\providecommand{\href}[2]{#2}

\end{document}